\definecolor{acid}{HTML}{479107}
\definecolor{crimson}{RGB}{177,12,12}
\definecolor{teal}{HTML}{078c91}
\definecolor{orange}{HTML}{ef9e8b}
\newtheorem{theorem}{Theorem}[section]
\newtheorem{lemma}[theorem]{Lemma}
\newtheorem{corollary}[theorem]{Corollary}
\newtheorem{proposition}[theorem]{Proposition}
\theoremstyle{definition}
\newtheorem{definition}[theorem]{Definition}
\newtheorem*{question}{Question}
\newtheorem{example}[theorem]{Example}
\newtheorem*{example*}{Example}
\theoremstyle{remark}
\newtheorem{remark}[theorem]{Remark}
\newenvironment{itemize*}
  {\begin{itemize}[label={--}]}
  {\end{itemize}}
\newenvironment{enumerate*}
  {\begin{enumerate}[label=(\alph*),topsep=-\parskip+\jot,itemsep=-\parskip-\jot]}
  {\end{enumerate}}
\newenvironment{enumerate**}
  {\begin{enumerate}[label=\roman*.]}
  {\end{enumerate}}
\newenvironment{enumerate***}
  {\begin{enumerate}[label=(\alph*'),topsep=-\parskip+\jot,itemsep=-\parskip-\jot]}
  {\end{enumerate}}
\newcommand{\Hom}[3]{\operatorname{Hom}_{#1}({#2}, {#3})}
\let\Hom\varHom
\newcommand{\iso}{\simeq} 
\renewcommand{\equiv}{\simeq}
\newcommand{\inv}[1]{{#1}^{\kern0.1em \text{-}1}}
\DeclareMathOperator{\id}{id}
\newcommand{\epi}{\twoheadrightarrow}
\newcommand{\inc}{\hookrightarrow}
\newcommand{\op}[1]{{#1}^{\operatorname{op}}}
\DeclareMathOperator{\C}{\mathsf{C}}
\DeclareMathOperator{\Frm}{\mathsf{Frm}}
\DeclareMathOperator{\Preord}{\mathsf{Preord}}
\newcommand{\Trip}[1]{\mathsf{Trip}({#1})} 
\newcommand{\Tripcart}[1]{\mathsf{Trip}_{\mathsf{cart}}({#1})} 
\newcommand{\Tripgeom}[1]{\mathsf{Trip}_{\mathsf{geom}}({#1})} 
\newcommand{\SubTrip}[1]{\mathsf{SubTrip}({#1})} 
\newcommand{\Tripreg}[1]{\mathsf{Trip}_{\mathsf{reg}}({#1})} 
\newcommand{\ClTrans}[1]{\mathsf{ClTrans}({#1})} 
\newcommand{\SubTop}[1]{\mathsf{SubTop}({#1})} 
\newcommand{\sh}[1]{\operatorname{Sh}(#1)}
\newcommand{\bigland}{\bigwedge}
\newcommand{\biglor}{\bigvee}
\renewcommand{\implies}{\Rightarrow}
\newbox\gnBoxA
\newdimen\gnCornerHgt
\newdimen\gnArgHgt
\def\name #1{%
	\setbox\gnBoxA=\hbox{$#1$}%
	\gnArgHgt=\ht\gnBoxA%
	\ifnum     \gnArgHgt<\gnCornerHgt \gnArgHgt=0pt%
	\else \advance \gnArgHgt by -\gnCornerHgt%
	\fi \raise\gnArgHgt\hbox{$\ulcorner$} \box\gnBoxA %
	\raise\gnArgHgt\hbox{$\urcorner$}}
\providecommand*{\dashv}{%
	\mathrel{%
		\mathpalette\@dashv\vdash
	}%
}
\newcommand*{\@dashv}[2]{%
	\reflectbox{$\m@th#1#2$}%
}
\renewcommand{\bar}[1]{\overline{#1}}
\renewcommand{\tilde}[1]{\widetilde{#1}}
\let\bf\mathbf
\DeclareMathOperator{\A}{\mathcal{A}}
\DeclareMathOperator{\B}{\mathcal{B}}
\let\AA\relax
\DeclareMathOperator{\AA}{\mathbb{A}}
\DeclareMathOperator{\BB}{\mathbb{B}}
\DeclareMathOperator{\Sets}{\mathsf{Set}}
\DeclareMathOperator{\E}{\mathsf{E}}
\newcommand{\Eff}{\mathsf{Eff}}
\newcommand{\Mod}{\mathsf{Mod}}
\newcommand{\RT}[1]{\mathsf{RT}({#1})}
\DeclareMathOperator{\oPCA}{\mathsf{OPCA}}
\DeclareMathOperator{\ArrAlg}{\mathsf{ArrAlg}}
\DeclareMathOperator{\ArrAlgcd}{\mathsf{ArrAlg}_{\mathsf{cd}}}
\DeclareMathOperator{\ArrAlgreg}{\mathsf{ArrAlg}_{\mathsf{reg}}}
\DeclareMathOperator{\ArrAlgbi}{\mathsf{ArrAlg}_{\mathsf{bi}}}
\DeclareMathOperator{\ArrAlgmod}{\mathsf{ArrAlg}_{\mathsf{mod}}}
\DeclareMathOperator{\HeytPre}{\mathsf{HeytPre}}
\newcommand{\AT}[1]{\mathsf{AT}({#1})}
\newcommand{\N}[1]{\mathsf{N}({#1})}
\newcommand{\Pow}[1]{\operatorname{Pow}({#1})}
\let\O\relax
\DeclareMathOperator{\O}{\mathcal{O}}
\newcommand{\downl}[1]{{\downarrow}{#1}}
\newcommand{\downr}[1]{{#1}{\downarrow}}
\newcommand{\s}[1]{{#1}^{\arr}}
\newcommand{\arr}{\rightarrow}
\let\to\relax
\newcommand{\to}{\longrightarrow}
\newcommand{\evleq}{\preccurlyeq}
\newcommand{\evm}{\curlywedge}
\DeclareRobustCommand\bigop[1]{%
	\mathop{\vphantom{\sum}\mathpalette\bigop@{#1}}\slimits@
}
\newcommand{\bigop@}[2]{%
	\vcenter{%
		\sbox\z@{$#1\sum$}%
		\hbox{\resizebox{\ifx#1\displaystyle0.85\fi\dimexpr\ht\z@+\dp\z@}{!}{$\m@th#2$}}%
	}%
}
\newcommand{\evmeet}{\DOTSB\bigop{\evm}}
\newcommand{\liso}{\dashv\vdash}
\newcommand{\numberset}[1]{{\mathbb{#1}}}
\newcommand{\NN}{\numberset{N}}
\newcommand{\myTitle}{\MakeUppercase{A category of arrow algebras for modified realizability}\xspace}
\newcommand{\myName}{Umberto Tarantino\xspace}
\newcommand{\myDepartment}{IRIF\xspace}
\newcommand{\myUni}{Université Paris Cité\xspace}
\newcommand{\Acknowledgements}{{
    \bigskip
    \footnotesize
    
    {\textbf {Acknowledgements.}}
    The author expresses his gratitude towards Benno van den Berg for supervising the above work as part of his Master Thesis.
}}
\begin{document}

\title{\myTitle} 
\author{\myName}
\date{\today}
\address{\myDepartment, \myUni}
\email{tarantino@irif.fr}
\keywords{arrow algebras, topos theory, realizability, geometric morphisms}

\begin{abstract}
In this paper we further the study of arrow algebras, simple algebraic structures inducing toposes through the tripos-to-topos construction, by defining appropriate notions of morphisms between them which correspond to morphisms of the associated triposes. Specializing to geometric inclusions, we characterize subtriposes of an arrow tripos in terms of nuclei on the underlying arrow algebra, recovering a classical locale-theoretic result. As an example of application, we lift modified realizability to the setting of arrow algebras, and we establish its functoriality.
\end{abstract} 

\maketitle


\bigskip
The purpose of this paper is to develop the theory of arrow algebras as a framework to study realizability toposes from a more concrete, `algebraic', point of view which can also take localic toposes into account. 

Arrow algebras were introduced in \cite{berg2023arrow}, of which this paper can be seen as a follow-up, generalizing Alexandre Miquel's \emph{implicative algebras} \cite{miquel2020-1, miquel2020-2} as algebraic structures which induce triposes and then toposes through the tripos-to-topos construction. The weakening of the axiom that distinguishes implicative algebras from arrow algebras allows the latter to perfectly factor through the construction of realizability triposes coming from partial combinatory algebras -- from now on, PCAs -- which are actually \emph{partial}, whereas implicative algebras are the intermediate structure only in the case of \emph{total} combinatory algebras. Indeed, in \cite{berg2023arrow} it is shown how every frame can be seen as an arrow algebra in such a way that the induced \emph{arrow tripos} coincides with the usual localic tripos; similarly, every PCA gives rise to an arrow algebra in such a way that the induced arrow tripos coincides with the usual realizability tripos. The aim of the following is then to define appropriate notions of morphisms between arrow algebras which correspond to morphisms of the associated triposes, so as to determine a category of arrow algebras factoring through the construction of both realizability and localic triposes in a 2-functorial way. 

\subsection*{Related work}
Earlier work in this area includes Hofstra's \emph{basic combinatory objects} (BCOs) \cite{hofstra2006}, fundamental in advocating for the study of \emph{relative} realizability which is implemented by the theory of both implicative and arrow algebras through the \emph{separator}. The notion of a BCO appears quite different and more combinatorial in nature than the more algebraic notion of an arrow algebra, and we still don't see how exactly the two structures relate: we hope to investigate this further in the future, possibly showing how to endow every arrow algebra with the structure of a BCO inducing the same tripos. A further generalization in this direction is given by Frey's \emph{uniform preorders} \cite{frey2013, frey2024}. Closely related to implicative algebras are instead Cohen, Miquey and Tate's \emph{evidenced frames} \cite{evframes2021}. We leave for further research the question of how exactly arrow algebras compare with these other structures -- now also with respect to morphisms.

\subsection*{Contents}The contents of this paper are as follows. In \autoref{sec:triposes} we briefly review the theory of triposes and their morphisms. In \autoref{sec:arralg} we summarize the theory of arrow algebras from \cite{berg2023arrow} and we describe the two main examples: those coming from frames, and those coming from partial combinatory algebras. In \autoref{sec:impmor} we introduce a first notion of morphism between arrow algebras, and in \autoref{sec:eximp1} we specialize it to the two main classes of arrow algebras described above recovering familiar notions in both cases. In \autoref{ch:arrtrip}, we show how these morphisms correspond to appropriate transformations of the associated triposes, and we introduce the notion of \emph{computational density} characterizing geometric morphisms of triposes; in \autoref{sec:examples of morphisms}, we specialize once again to the two main examples. In \autoref{sec:incl}, we focus on geometric inclusions, characterizing subtriposes of triposes arising from arrow algebras as arrow triposes themselves. This recovers and extends \cite[Sec. 6]{berg2023arrow}, in particular by proving the converse of \cite[Prop. 6.3]{berg2023arrow}, and relies on a particularly simple construction on arrow algebras which does not seem to work for implicative algebras. As subtriposes exactly correspond to subtoposes, this means in particular that we can study subtoposes of realizability toposes by studying \emph{nuclei} on the underlying arrow algebras. Finally, in \autoref{sec:mod-real} we apply the previous machinery to the study of Kreisel's modified realizability on the level of arrow algebras, rephrasing and extending results partially known in the literature at the level of PCAs.

\subsection*{2-categorical notation}Following \cite{zoethout2022}, with \emph{2-category} we mean a 2-dimensional category which is also an ordinary category, meaning that the unit and associativity laws for 1-cells hold on the nose. Instead, we speak of \emph{bicategories} for 2-dimensional categories where the axioms of an ordinary category only hold up to invertible 2-cells. 

A \emph{preorder-enriched category}, i.e. a category enriched over the category $\Preord$ of preordered sets and monotone functions, can then be seen as a locally small 2-category with at most one 2-cell between any pair of 1-cells. $\Preord$ is itself preorder-enriched with respect to the pointwise order. We refer to weak 2-functors and strict 2-functors as \emph{pseudofunctors} and \emph{2-functors}, respectively. \emph{Pseudonatural transformations} are defined in the usual way. With \emph{pseudomonad} on a preorder-enriched category, we will refer to a (fully) weak 2-monad, that is, an endo-pseudofunctor endowed with pseudonatural unit and multiplication satisfying the usual monad axioms up to isomorphism. For more details on 2-categories and 2-monads, we refer the reader to \cite{johnson20202dimensional}. 
    
In a preorder-enriched category, a morphism $f : X \to Y$ is \emph{left adjoint} to a morphism $g : Y \to X$ -- equivalently, $g$ is \emph{right adjoint} to $f$ -- if $\id_X \leq g f$ and $fg \leq \id_Y$, in which case we write $f \dashv g$. Two parallel morphisms $f,g$ are \emph{isomorphic} if $f \leq g$ and $g \leq f$, in which case we write $f \iso g$.

\section{Preliminary on tripos theory}\label{sec:triposes}
We begin by reviewing the necessary background on tripos theory, mainly drawing on the account given in \cite{van2008realizability} on the basis of  \cite{HJP80, pitts1981}; the reader is instead assumed to be familiar with topos theory, for which standard references are \cite{Lane1992SheavesIG,Borceux_1994,elephant}.

\subsection{Triposes}In this paper, we will only consider $\Sets$-based triposes.

Recall that a \emph{Heyting prealgebra} is a preorder whose poset reflection is a Heyting algebra, and a morphism of Heyting prealgebras is a monotone function which is a morphism of Heyting algebras between the poset reflections of domain and codomain. The category $\HeytPre$ of Heyting prealgebras is preorder-enriched with respect to the pointwise order.

\begin{definition}
	A \emph{($\Sets$-)tripos} is a pseudofunctor $P : \op\Sets \to \HeytPre$ satisfying the following axioms.
	\begin{enumerate}[label=\roman*.]
		\item For every function $f : X \to Y$, the map $f^* \coloneqq P(f) : P(Y)\to P(X)$ has both a left adjoint $\exists_f$ and a right adjoint $\forall_f$ in $\Preord$,\footnote{That is, $\exists_f$ and $\forall_f$ need not preserve the Heyting structure.}  and they satisfy the Beck-Chevalley condition.
		\item There exists a \emph{generic element} in $P$, i.e. an element $\sigma \in P(\Sigma)$ for some set $\Sigma$ with the property that, for every set $X$ and every element $\phi \in P(X)$, there exists a function $[\phi] : X \to \Sigma$ such that $\phi$ and $[\phi]^*(\sigma)$ are isomorphic elements of $P(X)$.
  	\end{enumerate}

A tripos $P$ such that $P(X)$ is the set $\Sigma^X$ of functions $X \to \Sigma$ for some set $\Sigma$ and $P(f)$ acts by precomposing with $f$ is said to be \emph{canonically presented}.
\end{definition}

\begin{example}\label{h valued predicates}
	Let $H$ be a complete Heyting algebra. We define the $\Sets$-tripos of $H$-valued predicates $P_H$ as follows.
 
For every set $X$, we let $P_H(X) \coloneqq H^X$, which is a Heyting algebra under pointwise order and operations; for every function $f :X\to Y$, the precomposition map $f^* : P_H(Y)\to P_H(X)$ is then a morphism of Heyting algebras. Adjoints for $f^*$ are provided by completeness as, for $\phi \in P_H(X)$ and $y\in Y$:
\[ \exists_f (\phi)(y)  \coloneqq  \biglor_{x \in \inv f (y)} \phi(x) \qquad \forall_f (\phi)(y)  \coloneqq  \bigland_{x \in \inv f(y) } \phi(x)\]
	which also satisfy the Beck-Chevalley condition. A generic element is trivially given by $\id_H \in P_H(H)$.
\end{example}

\begin{definition}
	Let $P$ and $Q$ be triposes. A \emph{transformation} $\Phi : P \to Q$ is a pseudonatural transformation $P \implies Q$, seeing $P$ and $Q$ as pseudofunctors ${\op\Sets\to \Preord}$; explicitly, this means that each component $\Phi_X : P(X) \to Q(X)$ is an order-preserving function but not necessarily a morphism of Heyting prealgebras. 
 
 Transformations $P \to Q$ can be ordered by letting $\Phi \leq \Psi$ if $\Phi_X \leq \Psi_X$ pointwise for every set $X$, therefore making triposes and transformations into a preorder-enriched category which we denote as $\Trip \Sets$. 

 A transformation $\Phi : P \to Q$ is an \emph{equivalence} if there exists another transformation $\Psi : Q \to P$ such that $\Phi \circ \Psi \iso \id_Q$ and $\Psi \circ \Phi \iso \id_P$.
\end{definition}

Every tripos $P$ gives rise to a topos $\Sets[P]$ through the \emph{tripos-to-topos construction}, which we will not describe here as it does not serve any purpose for the sake of this paper. The interested reader can find all the details in \cite{van2008realizability}.

\begin{example}
    For a complete Heyting algebra $H$, $\Sets[P_H]$ is the topos of $H$-valued sets, proved in \cite{higgs73} to be equivalent to the topos $\sh H$ of sheaves over $H$.
\end{example}

\subsection{Geometric morphisms of triposes}
The most important notion of morphism between toposes is arguably that of a \emph{geometric morphism}, which by now has a vast and standard theory. Much more niche, instead, is the theory of geometric morphisms of triposes, and how they relate with geometric morphisms of toposes: with no aim for a complete treatment, we review here what we will need in the following.

\begin{definition}\label{geometric morphism of triposes}
Let $P$ and $Q$ be triposes. A transformation $\Phi : P \to Q$ is \emph{cartesian} if each component $\Phi_X : P(X) \to Q(X)$ preserves finite meets up to isomorphism. We denote with $\Tripcart \Sets$ the wide subcategory of $\Trip \Sets$ on cartesian transformations.

A transformation is \emph{geometric} if it is cartesian and admits a right adjoint in $\Trip\Sets$, that is, if there exists another transformation $\Phi_+ : Q \to P$ such that $(\Phi^+)_X \dashv (\Phi_+)_X$ in $\Preord$ for every set $X$. We denote with $\Tripgeom \Sets$ the wide subcategory of $\Tripcart \Sets$ on geometric transformations.

In this language, a \emph{geometric morphism} of triposes $(\Phi^+, \Phi_+) : Q \to P$ is a geometric transformation $\Phi^+ : P \to Q$ with right adjoint $\Phi_+ : Q \to P$, of which they constitute respectively the \emph{inverse} and \emph{direct image}.\footnote{The direction is conventional and agrees with the definition of geometric morphisms of toposes.} 
\end{definition}

\begin{remark}
    Cartesian transformations preserve the interpretation of \emph{cartesian logic}, the fragment of finitary first-order logic defined by $\top$ and $\land$. 

    Geometric transformations preserve the interpretation of \emph{geometric logic}, the fragment of infinitary first-order logic defined by $\top$, $\bot$, finitary $\land$, infinitary $\lor$ and $\exists$.

    We will not go into details on the internal logic of triposes as it will not play any role in the paper; once again, we refer the reader to  \cite{van2008realizability}.
\end{remark}

\begin{remark}
Let $\Phi : P \to Q$ be an {equivalence} and let $\Psi : Q \to P$ be such that $\Phi \circ \Psi \iso \id_{Q}$ and $\Psi \circ \Phi \iso \id_{P}$. Then, $(\Phi,\Psi) : Q\to P$ and $(\Psi,\Phi) : P \to Q$ are both geometric morphisms. 
\end{remark}

\begin{theorem}
Every geometric morphism of triposes $Q \to P$ induces a geometric morphism $\Sets[Q] \to \Sets[P]$.
\end{theorem}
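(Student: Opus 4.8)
\emph{Proof sketch.} Throughout, write $(\Phi^+,\Phi_+) : Q \to P$ for the given geometric morphism, so that $\Phi^+ : P \to Q$ is cartesian and $\Phi^+_X \dashv \Phi_{+,X}$ in $\Preord$ for every set $X$. Recall that objects of $\Sets[P]$ are pairs $(X, {=_X})$ with ${=_X} \in P(X\times X)$ symmetric and transitive (a \emph{partial equivalence relation}, or per), and that morphisms $(X,{=_X}) \to (Y,{=_Y})$ are functional relations $F \in P(X\times Y)$ satisfying strictness, relationality, single-valuedness and totality in the internal logic of $P$; likewise for $Q$. The plan is to produce the inverse image functor $\Sets[P] \to \Sets[Q]$ by applying $\Phi^+$ componentwise, show it is left exact, and then construct a right adjoint.

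First I would define $\Sets[\Phi^+]$ on objects by $(X,{=_X}) \mapsto (X, \Phi^+_{X\times X}({=_X}))$ and on morphisms by $F \mapsto \Phi^+_{X\times Y}(F)$. Since $\Phi^+$ is cartesian it preserves $\top$, $\wedge$ and every entailment between formulas built from them, which handles symmetry, transitivity, strictness, relationality and single-valuedness, so pers are sent to pers. The delicate axiom is totality, ${=_X}(x,x) \le \exists_y F(x,y)$, which involves $\exists$. Here I would prove the key fact that $\Phi^+$ commutes with the existential quantifiers: from the pseudonaturality of $\Phi^+$ and of its adjoint $\Phi_+$, together with $\exists_f \dashv f^* \dashv \forall_f$ and $\Phi^+ \dashv \Phi_+$, a mate computation gives $\Phi^+_Y \circ \exists_f \iso \exists_f \circ \Phi^+_X$. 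Applying $\Phi^+$ to totality and inserting this isomorphism shows totality is preserved, so the assignment is well-defined on morphisms; functoriality is then routine, since composition of functional relations is expressed through $\wedge$ and $\exists$, both preserved. Left exactness of $\Sets[\Phi^+]$ follows because the terminal object, binary products and equalizers of a tripos-topos are built from $\top$, $\wedge$ and $\exists$, all preserved by the geometric transformation $\Phi^+$.

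It remains to equip $\Sets[\Phi^+]$ with a right adjoint $R : \Sets[Q] \to \Sets[P]$, the \emph{direct image}. The naive attempt of applying $\Phi_+$ componentwise fails: although $\Phi_+$, being a pointwise right adjoint, preserves $\top$, $\wedge$ and $\forall$, the mate computation now yields only $\exists_f \circ \Phi_{+,X} \le \Phi_{+,Y} \circ \exists_f$, the wrong direction to preserve totality, so $\Phi_+$ does not induce a functor. Instead I would construct $R$ by hand, sending $(Y,{\approx_Y})$ to an object of $\Sets[P]$ whose predicate is assembled from $\Phi_{+}({\approx_Y})$ and the internal structure of $P$ (encoding $\Phi_+$-relative singletons, in the spirit of Pitts's treatment), and then verify the natural bijection $\Sets[Q](\Sets[\Phi^+]A, B) \iso \Sets[P](A, RB)$ by transposing functional relations across the componentwise adjunctions $\Phi^+_X \dashv \Phi_{+,X}$, using Beck-Chevalley and Frobenius reciprocity. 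I expect this explicit construction and the verification of the adjunction to be the main obstacle: one cannot shortcut it via an adjoint functor theorem, because tripos-toposes such as realizability toposes generally lack a small generating set and so need not be Grothendieck. By contrast, everything concerning the inverse image is a formal consequence of the adjunction $\Phi^+ \dashv \Phi_+$ and pseudonaturality.
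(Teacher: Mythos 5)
The paper does not prove this theorem: it is stated as background, with the proof implicitly deferred to the tripos literature (Hyland--Johnstone--Pitts, Pitts's thesis, van Oosten's book). So your sketch can only be compared against that standard argument --- which is in fact exactly the route you follow. The portions you actually carry out are correct. Defining the inverse image $\Sets[\Phi^+] : \Sets[P] \to \Sets[Q]$ by applying $\Phi^+$ componentwise to pers and functional relations is the standard move; you correctly isolate totality as the only clause not handled by cartesianness plus pseudonaturality; and your mate computation is valid: since in this paper the right adjoint $\Phi_+$ is by definition itself a transformation of triposes (hence pseudonatural), the composites $\Phi^+_Y \circ \exists_f \dashv P(f) \circ (\Phi_+)_Y$ and $\exists_f \circ \Phi^+_X \dashv (\Phi_+)_X \circ Q(f)$ have isomorphic right adjoints, whence $\Phi^+_Y \circ \exists_f \iso \exists_f \circ \Phi^+_X$. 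Your diagnosis of the direct image is also correct: componentwise $\Phi_+$ preserves $\top$, $\wedge$, $\forall$ but only satisfies $\exists_f \circ (\Phi_+)_X \leq (\Phi_+)_Y \circ \exists_f$, which is the wrong direction for totality, so it does not even define a functor $\Sets[Q] \to \Sets[P]$ (in the localic case this is the familiar failure of $f_*$ to preserve joins).

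The genuine gap is that the direct image, which is the hard half and the real content of the theorem, is announced rather than constructed. Saying that $R(Y,\approx_Y)$ is ``assembled from $\Phi_+(\approx_Y)$ and $\Phi_+$-relative singletons'' names the right idea but is not yet a definition, and the adjunction $\Sets[Q](\Sets[\Phi^+]A, B) \iso \Sets[P](A, RB)$ is likewise only promised. What is missing, concretely: take as underlying set of $R(Y,\approx_Y)$ the set $Q(Y)$ (a set, since triposes here are $\Sets$-based --- this smallness is what makes the hand construction possible), and as $P$-valued equality of $\phi, \psi \in Q(Y)$ the result of applying $\Phi_+$ to the $Q$-internal statement, built from $\forall_Y$, $\wedge$, $\arr$ and $\approx_Y$, that $\phi$ and $\psi$ are equal $\approx_Y$-singletons; then one must check that this is a per, that transposing a functional relation $F \in Q(|A| \times Y)$ across the pointwise adjunctions (with the appropriate reindexing) yields a functional relation $A \to R(Y,\approx_Y)$, that this correspondence is bijective up to isomorphism and natural in $A$, and that $R$ is functorial. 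This verification --- strictness, single-valuedness, totality on the $P$-side, using Beck--Chevalley and Frobenius as you say --- is where most of the length of Pitts's and van Oosten's proofs lies, and none of it is routine in the sense that the inverse-image half is. Your closing remark is right, and worth keeping: no adjoint functor theorem can replace this construction, since toposes obtained from triposes (e.g.\ realizability toposes) need not be Grothendieck.
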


\begin{remark}
  The converse is not true in general:  a geometric morphism $\Sets[Q] \to \Sets[P]$ is induced by a geometric morphism of triposes $Q \to P$ if and only if its inverse image part \emph{preserves constant objects}. 
\end{remark}

\begin{example}
Let $X,Y$ be two complete Heyting algebras regarded as locales. Then, geometric morphisms $P_X \to P_Y$ correspond to locale homomorphisms $X \to Y$.

More precisely, given any geometric morphism $\Phi = (\Phi^+, \Phi_+) : P_X \to P_Y$, there exists an essentially unique morphism of locales $f : X \to Y$ such that, regarding $f$ as a morphism of frames $f^* : \O(Y) \to \O(X)$ and letting $f_* : \O(X) \to \O(Y)$ be its right adjoint, $\Phi^+$ is given by postcomposition with $f^*$ and $\Phi_+$ is given by postcomposition with $f_*$.
\end{example}

\subsection{Subtriposes}Geometric inclusions and surjections of toposes admit analogs on the level of triposes.

\begin{definition}
    A geometric morphism of triposes $\Phi = (\Phi^+, \Phi_+) : Q \to P$ is an \emph{inclusion}, in which case we also write $\Phi : Q \inc P$, if either of the following equivalent conditions hold:
\begin{itemize}
    \item[--]for every set $X$, $(\Phi_+)_X$ reflects the order;
    \item[--]$\Phi^+ \circ \Phi_+ \iso \id_{Q}$,
\end{itemize}

Dually, $\Phi$ is a \emph{surjection}, in which case we also write $\Phi : Q \epi P$, if either of the following equivalent conditions hold:
\begin{itemize}
    \item[--]for every set $X$, $(\Phi^+)_X$ reflects the order;
    \item[--]$\Phi_+ \circ \Phi^+ \iso \id_{P}$.
\end{itemize}
\end{definition}

\begin{proposition}
    Every geometric inclusion (resp. surjection) of triposes $Q \to P$ induces a geometric inclusion (resp. surjection) $\Sets[Q]\to \Sets[P]$.
\end{proposition}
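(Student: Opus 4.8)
The plan is to prove the two statements for inclusions and surjections simultaneously, using the characterization of inclusions and surjections via the unit/counit isomorphisms established in the preceding definition. By the earlier theorem, a geometric morphism of triposes $Q \to P$ already induces a geometric morphism $\Sets[Q] \to \Sets[P]$ of toposes; the task is therefore only to upgrade this to an inclusion (resp. surjection) when the tripos-level morphism is one. Since a geometric morphism of toposes is an inclusion precisely when its direct image is full and faithful, and a surjection precisely when its inverse image is faithful, I would reformulate both goals in terms of the standard counit/unit conditions and trace how the isomorphisms $\Phi^+ \circ \Phi_+ \iso \id_Q$ and $\Phi_+ \circ \Phi^+ \iso \id_P$ descend through the tripos-to-topos construction.

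First I would recall, at least schematically, how the tripos-to-topos construction $\Sets[-]$ acts on a geometric morphism, as guaranteed by the earlier theorem: objects of $\Sets[P]$ are pairs consisting of a set together with a $P$-valued (partial) equivalence relation, and the inverse and direct image functors are built componentwise from $\Phi^+$ and $\Phi_+$ applied to these predicates, with the adjunction $(\Phi^+)_X \dashv (\Phi_+)_X$ assembling into the adjunction of functors. The key observation is that $\Sets[-]$ is functorial and respects the relevant $2$-categorical structure, so it sends isomorphic transformations to naturally isomorphic functors. Concretely, I would argue that the comparison isomorphism $\Phi^+ \circ \Phi_+ \iso \id_Q$ (the defining condition for an inclusion) induces a natural isomorphism between the composite inverse-then-direct image functor on $\Sets[Q]$ and the identity, which is exactly the condition that the induced geometric morphism be an inclusion.

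For the inclusion case, then, I would start from $\Phi^+ \circ \Phi_+ \iso \id_Q$ in $\Trip\Sets$, note that applying $\Sets[-]$ yields $\Sets[\Phi^+] \circ \Sets[\Phi_+] \cong \id_{\Sets[Q]}$ up to natural isomorphism, and identify $\Sets[\Phi^+]$ and $\Sets[\Phi_+]$ with the inverse and direct image functors of the induced geometric morphism; this exhibits the counit of the induced adjunction as an isomorphism, hence the geometric morphism is an inclusion. The surjection case is formally dual, starting instead from $\Phi_+ \circ \Phi^+ \iso \id_P$ and obtaining that the unit is an isomorphism, equivalently that the inverse image is faithful. I would phrase both halves in parallel and remark that the surjection statement follows \emph{mutatis mutandis}.

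The main obstacle I anticipate is bookkeeping rather than conceptual: I must verify that $\Sets[-]$ is well-behaved enough on $2$-cells that an isomorphism of transformations is sent to a natural isomorphism of functors, and that the identity transformation is sent (up to canonical isomorphism) to the identity functor. This amounts to checking the pseudofunctoriality of the tripos-to-topos construction on the relevant fragment, which in principle is contained in the proof of the earlier theorem; the delicate point is that $\Phi^+ \circ \Phi_+ \iso \id_Q$ is only an isomorphism of transformations (a pair of pointwise order-inequalities), so I must confirm that this suffices to produce an honest natural isomorphism after passing to the toposes, rather than merely a pair of natural transformations in each direction. Provided the construction sends the preorder-enrichment to genuine natural isomorphisms, which is the expected behaviour, the argument closes without further computation.
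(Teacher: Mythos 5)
There is a genuine gap, and it sits exactly where you placed your trust: in the claim that the induced direct image functor is ``built componentwise from $\Phi_+$''. The transformation $\Phi_+$ is merely a right adjoint, so each $(\Phi_+)_X$ preserves finite meets (and $\forall$) but \emph{not} existential quantification; consequently the totality axiom of a functional relation, $\sim(y,y) \vdash \exists y'\, F(y,y')$, is not preserved, and componentwise application of $\Phi_+$ is not even a functor $\Sets[Q] \to \Sets[P]$. A concrete counterexample is localic: for the unique locale map $L \to 1$, the direct image of the induced geometric morphism $\sh L \to \Sets$ is the global-sections functor, whereas componentwise application of $f_*$ sends an $L$-valued set $(Y,\sim)$ to the set $\{\, y : \sim(y,y) = \top \,\}$ modulo $\sim(y,y')=\top$; taking $Y = \{a,b\}$ with $\sim(a,a)=U$, $\sim(b,b)=V$, $\sim(a,b)=U\wedge V$ for a nontrivial open cover $U \vee V = \top$, the associated sheaf has a global section (glue $a$ and $b$) but the componentwise image is empty. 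Since ``$\Sets[\Phi_+]$'' does not exist in this naive sense, your pseudofunctoriality argument -- applying $\Sets[-]$ to the relation $\Phi^+ \circ \Phi_+ \iso \id_Q$ -- cannot be run: the tripos-to-topos construction is (pseudo)functorial on cartesian/regular/geometric transformations, and $\Phi_+$ is none of these in general. The actual construction of the direct image (Pitts, van Oosten) is genuinely more involved, and relating its counit to the tripos-level counit is the real content of the proposition; your hedge at the end worries about preorder-isomorphisms versus natural isomorphisms, which is the harmless part, and misses this one.

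The surjection half contains a second, independent error: you assert that the topos-level unit becomes an isomorphism, ``equivalently that the inverse image is faithful''. Neither half of this is right. For toposes, unit-iso is equivalent to the inverse image being \emph{full and} faithful (a connected geometric morphism), which is far stronger than surjectivity, and it genuinely fails here: the two-point discrete locale over the point gives a tripos-level surjection whose induced geometric morphism $\Sets \times \Sets \to \Sets$ has as unit the diagonal $X \to X \times X$, which is monic but not invertible. The correct argument for surjections is in fact more elementary and avoids $\Phi_+$ entirely: morphisms in $\Sets[P]$ are functional relations taken up to $\liso$, the inverse image functor is legitimately componentwise $\Phi^+$ (since $\Phi^+$, being geometric, preserves the $\wedge$ and $\exists$ needed for functional relations), and order-reflection of each component $(\Phi^+)_X$ -- the paper's first characterization of surjections -- immediately gives that $\Phi^+ F \liso \Phi^+ G$ implies $F \liso G$, i.e.\ the inverse image functor is faithful, which is exactly the topos-theoretic definition of a surjection. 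For inclusions, the standard route is likewise not to push $\Phi_+$ through the construction but to form the closure transformation $\Phi_+\Phi^+$ on $P$ and identify $\Sets[Q]$, up to equivalence, with the subtopos it determines, in the spirit of \autoref{thm:subtoposes and subtriposes}.
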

\begin{definition}
    Let $\SubTrip {P}$ be the set of \emph{subtriposes} of $P$, that is, triposes endowed with a geometric inclusions into $P$.\footnote{For practical reasons, we identify a subtripos with the inclusion itself.} Given two geometric inclusions $\Phi : Q \inc P$ and $\Psi: R \inc P$, we write $\Phi \subseteq \Psi$ if there exists a geometric morphism $\Theta : Q \to R$ such that $\Phi \iso \Psi \circ \Theta$ -- meaning that $\Phi_+ \iso \Psi_+ \circ \Theta_+$ or equivalently $\Phi^+ \iso \Theta^+ \circ \Psi^+$ --, in which case $\Theta$ is an inclusion itself. This relation obviously makes $\SubTrip{P}$ into a preorder.

Two subtriposes $\Phi : Q \inc P$ and $\Psi: R \inc P$ are \emph{equivalent} if they are isomorphic elements of $\SubTrip P$, that is, if both $\Phi \subseteq \Psi$ and $\Psi \subseteq \Phi$ hold; equivalently, this means that there exists an equivalence $\Theta : Q \to R$ such that $\Phi \iso \Psi \circ \Theta$.
\end{definition}

As it is known, subtoposes of a topos $\E$ correspond up to equivalence to \emph{local operators} in $\E$, that is, morphisms $j : \Omega \to \Omega$ such that, in the internal logic of $\E$:
\begin{enumerate}[label=\roman*.]
    \item $j(\operatorname{t}) = \operatorname{t}$;
    \item $jj = j$;
    \item $j(a \land b) = j(a) \land j(b)$,
\end{enumerate}

In a topos of the form $\Sets[P]$ for a canonically presented tripos $P \coloneqq\Sigma^{-}$, such a morphism corresponds to an essentially unique transformation $\Phi_j : P \to P$ which is:
\begin{enumerate}[label=\roman*.]
    \item cartesian;
    \item \emph{inflationary}, that is, $\id_P \leq \Phi_j$;
    \item \emph{idempotent}, that is, $\Phi_j \Phi_j \iso \Phi_j$.
\end{enumerate}
Such $\Phi_j$ is called a \emph{closure transformation} on $P$; conversely, every closure transformation on $P$ determines a local operator on $\Sets[P]$.

These correspondences lead to the following result.

\begin{theorem}\label{thm:subtoposes and subtriposes}
Let $P$ be a canonically presented tripos and let $\ClTrans P$ be the set of closure transformations on $P$, ordered as above.
\begin{enumerate}
\item     Geometric inclusions into $P$ correspond, up to equivalence, to closure transformations on $P$; in particular, there is an equivalence of preorder categories:
\[\SubTrip P \iso \op {\ClTrans P}\]

    \item     Every geometric inclusion of toposes into $\Sets[P]$ is, up to equivalence, of the form $\Sets[Q] \inc \Sets[P]$, induced by an essentially unique geometric inclusion of triposes $Q \inc P$; in particular, there is an equivalence of preorder categories:
    \[\SubTop {\Sets[P]} \iso \SubTrip P \]
\end{enumerate}
\end{theorem}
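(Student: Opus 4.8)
The plan is to prove the two equivalences separately, using the correspondences between local operators, closure transformations, and geometric inclusions that were set up just before the statement. For part (1), the strategy is to exhibit mutually inverse (up to the respective equivalence relations) assignments between $\SubTrip P$ and $\ClTrans P$, being careful that the equivalence reverses order, which accounts for the $\op{(-)}$.

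First I would construct the map from geometric inclusions to closure transformations. Given an inclusion $\Phi = (\Phi^+,\Phi_+) : Q \inc P$, the composite $\Phi_+ \circ \Phi^+ : P \to P$ is the natural candidate: I would verify it is a closure transformation. It is cartesian because $\Phi^+$ is cartesian and $\Phi_+$ preserves finite meets (being a right adjoint, it preserves all limits, in particular finite meets up to isomorphism). It is inflationary because the adjunction $\Phi^+ \dashv \Phi_+$ gives $\id_P \leq \Phi_+\Phi^+$. Idempotency follows from the inclusion condition $\Phi^+\Phi_+ \iso \id_Q$: indeed $\Phi_+\Phi^+\Phi_+\Phi^+ \iso \Phi_+ \id_Q \Phi^+ \iso \Phi_+\Phi^+$. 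Conversely, given a closure transformation $j \coloneqq \Phi_j$, I would need to build a tripos $Q_j$ together with an inclusion into $P$. The natural construction is to take the fixed points / the image of $j$: define $Q_j(X)$ to be the sub-preorder of $P(X)$ on elements $\phi$ with $j_X(\phi) \iso \phi$, with the inclusion $Q_j \inc P$ having direct image the inclusion of fixed points and inverse image given by $j$ itself (corestricted to the fixed points). One must check that $Q_j$ is again a tripos — closure under the Heyting operations up to the $j$-modality, the existence of adjoints $\exists_f,\forall_f$ obtained by composing with $j$, Beck--Chevalley, and crucially the genericity of a suitable $j$-closed generic element — and that $(\,j\text{-corestriction},\ \text{inclusion})$ is a geometric inclusion.

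Then I would check the two round-trips. Starting from $j$, forming $Q_j$ and taking $\Phi_+\Phi^+$ should recover $j$ up to isomorphism, essentially because $\Phi^+$ applies $j$ and $\Phi_+$ forgets back into $P$. Starting from an inclusion $\Phi$, forming the closure transformation $\Phi_+\Phi^+$ and then its fixed-point tripos should recover a tripos equivalent to $Q$ over $P$: this uses that $\Phi_+$ is order-reflecting (the inclusion condition) to identify $Q(X)$ with the $\Phi_+\Phi^+$-fixed points of $P(X)$. Finally I would verify that both assignments are monotone and order-reversing with respect to $\subseteq$ on $\SubTrip P$ and $\leq$ on $\ClTrans P$: if $\Phi \subseteq \Psi$ via $\Theta$, then on inverse images $\Phi^+ \iso \Theta^+\Psi^+$, which pushes the associated closure transformations in the opposite direction, giving $\Phi_+\Phi^+ \geq \Psi_+\Psi^+$ — hence the contravariance recorded by $\op{\ClTrans P}$.

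Part (2) I would obtain by stitching together two facts already available. The $\Sets[-]$ construction sends geometric inclusions of triposes to geometric inclusions of toposes (the Proposition preceding the statement), so composing the equivalence of part (1) with the classical correspondence between subtoposes of $\Sets[P]$ and local operators on $\Sets[P]$, together with the stated essentially-unique correspondence between local operators on $\Sets[P]$ and closure transformations on $P$ (valid since $P$ is canonically presented), yields a chain $\SubTop{\Sets[P]} \iso \op{\ClTrans P} \iso \SubTrip P$. The content to verify is that this composite is realized concretely by $Q \mapsto \Sets[Q]$ and that every geometric inclusion into $\Sets[P]$ arises, up to equivalence, from a unique subtripos; here the preservation-of-constant-objects remark is the mechanism guaranteeing that the subtopos inclusion descends to a tripos inclusion.

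I expect the main obstacle to be verifying that the fixed-point construction $Q_j$ is genuinely a tripos — in particular producing a generic element for $Q_j$ and checking Beck--Chevalley for the $j$-twisted quantifiers — and confirming that the inclusion $Q_j \inc P$ it carries is geometric in the precise sense of \autoref{geometric morphism of triposes}. This is exactly the point where the algebraic simplicity of arrow algebras (the nucleus construction alluded to in the introduction) is expected to pay off, but at the abstract tripos level it requires the most care.
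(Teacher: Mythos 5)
Your proposal is correct, but note that the paper does not actually prove \autoref{thm:subtoposes and subtriposes}: it presents it as a known result of tripos theory, assembled from the two correspondences stated immediately before it (subtoposes of $\Sets[P]$ versus local operators on $\Sets[P]$, and local operators versus closure transformations when $P$ is canonically presented), with the details deferred to \cite{HJP80}, \cite{pitts1981} and \cite{van2008realizability}. Your part (2) is exactly that assembly, and your part (1) --- the closure transformation $\Phi_+\Phi^+$ attached to an inclusion, the subtripos attached to a closure transformation, the two round trips, and the contravariance accounting for $\op{\ClTrans P}$ --- is the standard argument from those references, so in substance you have reconstructed the proof the paper is pointing to.

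The one place where you diverge from the paper's presentation is the model of the subtripos associated to a closure transformation $\Phi_j$: you take the fixed-point tripos $Q_j(X) = \{\phi \in P(X) \mid j_X(\phi) \iso \phi\}$, with inverse image $j$ corestricted to fixed points and direct image the inclusion, whereas the paper's description (given right after the theorem and feeding into \autoref{standard inclusion}) keeps the carrier $\Sigma^{-}$ unchanged and twists only the order, $\phi \vdash^j_I \psi$ iff $\phi \vdash_I J\psi$, and the implication. The two are equivalent --- the paper itself records precisely this equivalence, in the arrow-algebra setting, as \autoref{prop:alternative subtripos} --- but the twisted-order version stays inside canonically presented triposes, so the generic element is inherited on the nose and the inclusion takes the explicit adjoint form $(\id_\Sigma \circ -) \dashv (J \circ -)$ of \autoref{standard inclusion}; these are exactly the verifications (genericity of a $j$-closed generic element, Beck--Chevalley for the $j$-twisted quantifiers) that you flag as the main obstacle of your construction. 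They do go through (the generic element of $Q_j$ is $j_\Sigma(\sigma)$, by pseudonaturality of $\Phi_j$; the quantifiers are $j\exists_f$ and the restriction of $\forall_f$, which preserves closed elements), so there is no gap; but the paper's presentation is the more economical route to the corollary that follows it.
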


\begin{remark}
    Note then that the poset reflection of $\SubTrip P$ is a bounded distributive lattice, since so is the set of subtoposes of any topos considered up to equivalence.
\end{remark}

In the case of a canonically presented tripos $P \coloneqq \Sigma^{-}$, we can even give an explicit description of the inclusion $Q \inc P$ inducing a geometric inclusion into $\Sets[P]$. 

Let $(\Sets[P])_j$ be the subtopos of $\Sets[P]$ corresponding to a closure transformation $\Phi_j : P \to P$ and let $J \coloneqq (\Phi_j)_\Sigma(\id_\Sigma): \Sigma \to \Sigma$. Then, $(\Sets[P])_j$ is equivalent over $\Sets$ to $\Sets[P_j]$, where $P_j$ is the canonically presented tripos defined as follows:
    \begin{itemize}
        \item[--] the underlying pseudofunctor is still $\Sigma^{-}$;
        \item[--] the order $\vdash^j$ is redefined as $\phi \vdash_I^j \psi$ if and only if $\phi \vdash_I J \psi $;
        \item[--] the implication $\arr_j$ is redefined as
        \[\begin{tikzcd}[column sep = large]
            {\Sigma \times \Sigma} \rar["{\id_{\Sigma} \times J}"] & \Sigma\times \Sigma \rar["\to"] & \Sigma
        \end{tikzcd}\]
        while $\top,\bot,\land,\lor$ remain unchanged.\footnote{Left and right adjoints for $f^*$ can then be defined as $\phi \mapsto \exists_f(\phi)$ and $\phi \mapsto \forall_f (J\phi)$.}
    \end{itemize}
This means that we can restate the previous theorem as follows.

\begin{corollary}\label{standard inclusion}
Let $P$ be a canonically presented tripos.

Every geometric inclusion of toposes into $\Sets[P]$ is induced, up to equivalence, by a geometric inclusion of triposes of the form:
\[\begin{tikzcd}
	{P_{j}} & {P}
	\arrow[""{name=0, anchor=center, inner sep=0}, "{\id_{\Sigma} \circ - }"', curve={height=12pt}, from=1-2, to=1-1]
	\arrow[""{name=1, anchor=center, inner sep=0}, "{J \circ - }"', curve={height=12pt}, from=1-1, to=1-2]
	\arrow["\dashv"{anchor=center, rotate=-90}, draw=none, from=0, to=1]
\end{tikzcd} \]
for some $J : \Sigma \to \Sigma$ corresponding as above to a closure transformation $\Phi_j$ on $P$.
\end{corollary}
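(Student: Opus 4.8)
The plan is to read this corollary off from \autoref{thm:subtoposes and subtriposes} together with the explicit description of $P_j$ given just above, so that the only genuine work is to check that the two displayed transformations really do assemble into a geometric inclusion of triposes $P_j \inc P$ that is classified by the closure transformation $\Phi_j$. By \autoref{thm:subtoposes and subtriposes}(2), an arbitrary geometric inclusion of toposes into $\Sets[P]$ is, up to equivalence, of the form $\Sets[Q] \inc \Sets[P]$ for an essentially unique geometric inclusion of triposes $Q \inc P$, which by part (1) is classified by a closure transformation $\Phi_j$ on $P$. Setting $J \coloneqq (\Phi_j)_\Sigma(\id_\Sigma)$ and defining $P_j$ as above, it then suffices to exhibit a geometric inclusion $P_j \inc P$ of the stated shape and to verify that it too is classified by $\Phi_j$: essential uniqueness will then force $Q \inc P$ to be equivalent to it.

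First I would check that $\Phi^+ \coloneqq \id_\Sigma \circ - : P \to P_j$ and $\Phi_+ \coloneqq J \circ - : P_j \to P$ are well-defined pseudonatural transformations. Pseudonaturality is immediate: $\Phi^+$ is the identity on underlying functions, and postcomposition with $J$ commutes strictly with the reindexing maps $f^*$, which act by precomposition. Order-preservation of $\Phi^+$ follows from $\Phi_j$ being inflationary, so that $\psi \vdash_I J\psi$ and hence $\phi \vdash_I \psi$ gives $\phi \vdash_I J\psi$, i.e. $\phi \vdash_I^j \psi$; order-preservation of $\Phi_+$ follows from $J$ being monotone and idempotent, properties it inherits from $\Phi_j$ being a cartesian idempotent closure transformation. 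The adjunction $\Phi^+ \dashv \Phi_+$ in $\Preord$ then holds on the nose, since by the very definition of $\vdash^j$ one has $\phi \vdash_I^j \psi$ if and only if $\phi \vdash_I J\psi$; that is, the two maps form a Galois connection at each $X$.

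Next I would verify that this geometric morphism is a cartesian inclusion. Cartesianness of $\Phi^+$ is immediate because $\top$ and $\land$ are unchanged between $P$ and $P_j$ and $\Phi^+$ is the identity on functions, so they are preserved strictly. For the inclusion property I would compute $\Phi^+\Phi_+(\psi) = J\psi$ and check that $J\psi$ and $\psi$ are isomorphic in $P_j$: the relation $J\psi \vdash_I^j \psi$ unfolds to $J\psi \vdash_I J\psi$, which holds by reflexivity, while $\psi \vdash_I^j J\psi$ unfolds to $\psi \vdash_I JJ\psi$, which holds since $J$ is inflationary and idempotent. Hence $\Phi^+ \Phi_+ \iso \id_{P_j}$, and $P_j \inc P$ is a geometric inclusion of triposes.

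Finally I would identify the closure transformation it induces. The composite $\Phi_+\Phi^+ : P \to P$ sends $\phi$ to $J \circ \phi$; writing $\phi = \phi^*(\id_\Sigma)$, where $\phi$ is viewed as a map $X \to \Sigma$, and using pseudonaturality of $\Phi_j$, one obtains $\Phi_j(\phi) \iso \phi^*(\Phi_j(\id_\Sigma)) = \phi^*(J) = J \circ \phi$, so $\Phi_+\Phi^+ \iso \Phi_j$. Thus $P_j \inc P$ is classified by $\Phi_j$, and by the essential uniqueness in \autoref{thm:subtoposes and subtriposes} it is equivalent to $Q \inc P$, which proves the claim. None of these steps is hard; the one deserving care is this last identification, where the reduction to the generic element is exactly what pins the abstract classification of \autoref{thm:subtoposes and subtriposes} onto the concrete presentation $\id_\Sigma \circ - \dashv J \circ -$.
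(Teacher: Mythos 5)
Your proposal is correct and takes essentially the same route as the paper: the corollary is presented there as an immediate restatement of \autoref{thm:subtoposes and subtriposes} combined with the explicit description of $P_j$ given just before it, with no separate proof. Your verification that the displayed pair $(\id_\Sigma \circ -) \dashv (J \circ -)$ is a well-defined geometric inclusion classified by $\Phi_j$ --- in particular the generic-element/pseudonaturality computation showing $\Phi_+\Phi^+ \iso \Phi_j$ --- correctly fills in exactly the details the paper leaves implicit.
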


\begin{example}
Let $X$ be a complete Heyting algebra regarded as a locale. Then, closure transformations on $P_X$ correspond to \emph{nuclei} on $X$, that is, monotone, inflationary and idempotent endofunctions on the underlying frame of $X$; therefore, they also correspond to \emph{sublocales} of $X$, defined dually as quotient frames of closed elements.
\end{example}

Another important notion from topos theory which can be recovered at the level of triposes is that of \emph{open} and \emph{closed} subtoposes.

\begin{definition}\label{open subtriposes}
      A subtripos $\Phi : Q \inc P$ is \emph{open} if there exists an element $\alpha \in P(1)$ such that, for every set $I$ and every $\phi \in P(I)$: 
  \[\Phi_+\Phi^+(\phi)\iso P(!)(\alpha) \arr \phi\]
  where $!$ is the unique function $I \to 1$. 

Dually, a geometric inclusion of triposes $\Psi : R \inc P$ is \emph{closed} if there exists an element $\beta \in P(1)$ such that, for every $\phi \in P(I)$:
\[\Psi_+\Psi^+(\phi)\iso  \phi \lor P(!)(\beta)\]
where $!$ is the unique function $I \to 1$.

For $\alpha = \beta$, $\Phi$ and $\Psi$ define each other's {complement} in the lattice of subtriposes of $P$ considered up to equivalence. 
\end{definition}

\begin{corollary}
    Through the correspondence in \autoref{thm:subtoposes and subtriposes}, open (resp. closed) subtriposes correspond to open (resp. closed) subtoposes.
\end{corollary}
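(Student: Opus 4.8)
The strategy is to track the single endomap of $P$ that a subtripos, its closure transformation and the associated local operator all share, and then to match the defining clauses of \autoref{open subtriposes} against the classical descriptions of open and closed local operators. The starting observation is that, for a geometric inclusion $\Phi = (\Phi^+,\Phi_+) : Q \inc P$, the composite $\Phi_+\Phi^+$ is exactly the closure transformation $\Phi_j$ assigned to $\Phi$ by \autoref{thm:subtoposes and subtriposes}: the unit of $\Phi^+ \dashv \Phi_+$ makes it inflationary, the inclusion identity $\Phi^+\Phi_+ \iso \id_Q$ makes it idempotent, and $\Phi^+$ cartesian together with $\Phi_+$ a right adjoint makes it cartesian, so it is a closure transformation, namely the one corresponding to $\Phi$. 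Thus the two clauses are really conditions on $\Phi_j$: openness says $\Phi_j(\phi) \iso P(!)(\alpha) \arr \phi$ and closedness says $\Phi_j(\phi) \iso \phi \lor P(!)(\beta)$, for all sets $I$ and all $\phi \in P(I)$.

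Next I would make these conditions concrete using the explicit description from \autoref{standard inclusion}. Writing $J \coloneqq (\Phi_j)_\Sigma(\id_\Sigma) : \Sigma \to \Sigma$ we have $\Phi_j(\phi) \iso J \circ \phi$, while $P(!)(\alpha)$ is the family constant at $\alpha \in P(1) \cong \Sigma$ and $\arr,\lor$ act pointwise in $\Sigma$. Evaluating on the generic element $\id_\Sigma$ shows that the open condition is equivalent to $J \iso (\alpha \arr -)$ and the closed condition to $J \iso (- \lor \beta)$ as maps $\Sigma \to \Sigma$; conversely, since $\Phi_j(\phi) \iso J \circ \phi$ for every $\phi$, either pointwise identity immediately reinstates the condition for all $\phi$. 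So openness (resp. closedness) of the subtripos is exactly the statement that the representing map $J$ is an implication $\alpha \arr (-)$ (resp. a join $(-) \lor \beta$).

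Finally I would transport this along the correspondence of \autoref{thm:subtoposes and subtriposes}. Under the tripos-to-topos construction $J$ represents the local operator $j : \Omega \to \Omega$ classifying the subtopos $\Sets[Q] \inc \Sets[P]$; the arrow-algebra operations $\arr$ and $\lor$ induce the internal Heyting implication $\Rightarrow$ and join on $\Omega$; and $\alpha \in P(1)$ induces a subterminal $a$ of $\Sets[P]$, with $\beta \mapsto b$. Hence $J \iso (\alpha \arr -)$ becomes $j = (a \Rightarrow -)$ and $J \iso (- \lor \beta)$ becomes $j = (- \lor b)$. By the classical characterization these are precisely the local operators cutting out the open subtopos determined by $a$ and the closed subtopos determined by $b$ \cite[A4.5]{elephant}; as every equivalence above is reversible, this gives the correspondence in both directions.

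I expect the only genuine obstacle to be this last transport step: one must know that the identification of the closure transformation $\Phi_j$ with the local operator $j$ is compatible with the Heyting structure and with the passage from $\alpha \in P(1)$ to the subterminal $a$, so that the tripos-level formulas $P(!)(\alpha) \arr (-)$ and $(-) \lor P(!)(\beta)$ are sent exactly to the internal formulas $a \Rightarrow (-)$ and $(-) \lor b$ on $\Omega$. This is a soundness statement for the internal logic of $\Sets[P]$ that belongs to the omitted details of the tripos-to-topos construction; once it is granted, the reduction to the generic element makes the matching of the two conditions routine.
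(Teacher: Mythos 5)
The paper never proves this corollary: it sits in the preliminary review of tripos theory, where the whole package (\autoref{thm:subtoposes and subtriposes}, \autoref{standard inclusion}, and this corollary) is quoted from the literature, so there is no in-paper argument to compare yours against. Judged on its own terms, your reconstruction is correct and is essentially the standard one. Your three reductions are all sound: (1) for a geometric inclusion $\Phi$, the composite $\Phi_+\Phi^+$ is indeed the closure transformation attached to $\Phi$ --- this is consistent with the paper, since \autoref{open subtriposes} is already phrased in terms of $\Phi_+\Phi^+$, and the inclusion exhibited in \autoref{standard inclusion} has direct-after-inverse composite $J \circ -$, i.e.\ $\Phi_j$ itself; (2) the reduction to the generic element works because a transformation of canonically presented triposes is, up to isomorphism, postcomposition with its value at $\id_\Sigma$, and the isomorphism $J \iso_\Sigma (\alpha \arr -)$ is transported to every $P(I)$ by reindexing along $\phi$; (3) the identification of open (resp.\ closed) local operators with $a \Rightarrow (-)$ (resp.\ $(-) \lor b$) for a subterminal is exactly the classical characterization in \cite{elephant}.

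The step you defer --- that the correspondence between closure transformations on $P$ and local operators on $\Sets[P]$ carries $P(!)(\alpha)\arr(-)$ to $a \Rightarrow (-)$ and $(-)\lor P(!)(\beta)$ to $(-)\lor b$ --- is genuinely the crux: without it the argument only matches two formulas that look alike on either side of the correspondence, which is precisely what the corollary asserts. So you have correctly isolated, rather than closed, the essential gap. That said, the missing fact is a standard consequence of the tripos-to-topos construction: the subobject classifier of $\Sets[P]$ is presented by $\Sigma$ with its Heyting operations, and subobjects of $1$ in $\Sets[P]$ form the poset reflection of $P(1)$, so the internal $\Rightarrow$, $\lor$ and the subterminal attached to $\alpha$ are computed by exactly the tripos-level data you use. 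Since the paper itself delegates the entire statement to the literature, deferring this soundness step is a reasonable stopping point, though a self-contained proof would have to spell it out or cite it precisely. One cosmetic slip: in this preliminary setting there are no arrow algebras yet; the operations $\arr$ and $\lor$ on $\Sigma$ belong to the canonically presented tripos structure, not to an ``arrow algebra''.
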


\begin{example}
Let $X$ be a complete Heyting algebra regarded as a locale. Then, open (resp. closed) subtriposes of $P_X$ correspond to \emph{open} (resp. \emph{closed}) \emph{sublocales} of $X$.
\end{example}

\section{Arrow algebras}\label{sec:arralg}
\subsection{Arrow algebras}We now briefly review part of the theory of arrow algebras presented in \cite{berg2023arrow} so as to fix notations and make some remarks.

An \emph{arrow algebra} $\A = (A,\evleq,\arr,S)$ is the datum of a complete meet-semilattice $(A, \evleq)$, a binary operation $\arr : A \times A \to A$ called \emph{implication} which is monotone in the second component and antitone in the first component, and a specified subset $S\subseteq A$ called \emph{separator} which is upward closed, closed under modus ponens, and contains appropriate \emph{combinators} $\bf{k}$, $\bf{s}$, $\bf{a}$.  

Although an arrow algebra $\A$ is defined in terms of the \emph{evidential order} $\evleq$, there is another order that can be defined in terms of implications and separators, that is, the \emph{logical} order:
\[a \vdash b \iff a \arr b \in S\]
$(A,\vdash)$ is a Heyting prealgebra, with $\arr$ being the Heyting implication. Through the logical order, we can also recover the separator as $\Set{ a \in A | \top \vdash a}$: in hindsight, this characterizes the separator as what Pitts calls the set of \emph{designated truth values} of the induced arrow tripos.

For every set $I$, the set $A^I$ of functions $I \to A$ can be made into an arrow algebra $\A^I$ by choosing pointwise order and implication, and with the separator:
\[S^I \coloneqq \Set{ \phi : I \to A | \evmeet_{i\in I}\phi(i) \in S}\]
The logical order in $\mathcal{A}^I$, which we denote as $\vdash_I$, is thus given explicitly by:
\[\phi \vdash_I \psi \iff \evmeet_{i\in I} \phi(i)\arr \psi(i) \in S \]
In general, note then that $\vdash_I$ is stronger than the pointwise version of $\vdash$: the two orders coincide only if the separator is closed under arbitrary meets. However, it is easy to see that meets, joins and implications in $(A^I,\vdash_I)$ are computed pointwise as meets, joins and implications in $(A, \vdash)$. Given a nucleus $j$ on $\A$, we denote with $\vdash^j_I$ the logical order in $\A_j^I$: again by the properties of nuclei and separators, $\phi \vdash^j_I \psi$ explicitly means $\phi \vdash_I j \psi$.

$\A$ induces the \emph{arrow tripos}:
\[P_{\A} : \op\Sets\to \HeytPre \qquad \begin{tikzcd}
		I & {(A^I, \vdash_I)} \\
		J & {(A^J, \vdash_J)}
		\arrow["f", from=2-1, to=1-1]
		\arrow["{-\circ f}", from=1-2, to=2-2]
		\arrow[maps to, from=1-1, to=1-2]
		\arrow[maps to, from=2-1, to=2-2]
	\end{tikzcd}\]
We denote with $\AT \A$ the corresponding \emph{arrow topos} $\Sets[P_{\A}]$.

\begin{remark}
    In the following, we will make use of a form of reasoning internal to an arrow algebra $\A = (A,\evleq,\arr,S)$, which is given by a combination of \cite[Prop. 3.5]{berg2023arrow} and \cite[Lem. 3.6]{berg2023arrow} and which we will refer to as \emph{intuitionistic reasoning}. 

Suppose we want to prove that $\evmeet_{a_1,\dots,a_n\in A} \psi(a_i/p_i) \arr \chi(a_i/p_i) \in S$ for some propositional formulas $\psi,\chi$ built from propositional variables $p_1,\dots,p_n$ using implications only. Then, we will look for a propositional formula $\phi$ also built from the same propositional variables using implications only, such that
 \begin{enumerate}
        \item $\phi \arr \psi \arr \chi$ is an intuitionistic tautology, 
        \item and $\evmeet_{a_1,\dots,a_n\in A}\phi(a_i/p_i) \in S$.
\end{enumerate}
    If such a $\phi$ can be found, then by \cite[Prop. 3.5]{berg2023arrow} and (1) it will follow that:
    \[ \evmeet_{a_1,\dots,a_n \in A} \phi(a_i/p_i) \arr \psi(a_i/p_i) \arr \chi(a_i/p_i) \in S\]
    and hence, by \cite[Lem. 3.6]{berg2023arrow} and (2):
    \[ \evmeet_{a_1,\dots,a_n \in A} \psi(a_i/p_i) \arr \chi(a_i/p_i) \in S\]
\end{remark}

Let us now introduce the two main classes of arrow algebras that we will keep on analyzing throughout the paper.

\subsection{Frames}

Every frame $\O(X)$ can be canonically seen as an arrow algebra\footnote{In particular, compatible with joins.} by using its order and its Heyting implication as the arrow structure, and $\{\top\}$ as the separator. Note then that the logical order coincides with the evidential order, since $x \arr y \in S$ if and only if $\top \leq x \arr y$, which is equivalent to $x \leq y$. 

Similarly, the logical order $\vdash_I$ on $\O(X)^I$ reduces to the pointwise order,\footnote{We will see in \autoref{prop:localic arrtrip} how this property characterizes frames among arrow algebras up to equivalence of the induced triposes.} which makes so that the arrow tripos $P_{\O(X)}$ coincides with the localic tripos induced by $\O(X)$, and hence $\AT {\O(X)}$ is equivalent to the topos $ \sh {\O(X)}$ of sheaves over $\O(X)$. 

\subsection{Partial combinatory algebras}\label{subsec:pcas}

A main example of arrow algebras arises from PCAs, building blocks of realizability toposes. In this paper, we will continue with the not-entirely-standard definitions and conventions about PCAs given in \cite{berg2023arrow}, which closely follow those of \cite{zoethout2022}. This means that, with PCA, we will always mean Hofstra's notion of a \emph{filtered ordered partial combinatory algebra} \cite{hofstra2006}.   This allows us for the highest level of generality considered in the literature for what concerns appropriate notions of morphisms and their connections with morphisms of realizability triposes and toposes, as we will see in the next sections.

Let $\mathbb{P}=(P, \leq, \cdot, P^{\#})$ be a PCA and consider the poset $DP$ of downward-closed subsets of $P$. Recall from \cite[Ex. 2.1.7]{zoethout2022} that $DP$ can be given an application operation by defining, for $\alpha , \beta \in DP$:
\[\alpha \cdot \beta \coloneqq \downl{ \Set{ x  y | x \in \alpha, \ y \in \beta} } \] 
in case $\downr{x y}$ for all $x\in \alpha$ and $y\in \beta$. Then, as seen in \cite[Ex. 2.1.18]{zoethout2022}, the family of downward-closed subsets containing an element from $P^{\#}$ is a filter on this partial applicative poset:
\begin{align*}
(DP)^{\#}\coloneqq{} & \Set{ \alpha \in DP | \exists a \in \alpha \cap P^{\#} } \\
    ={} &  \Set{ \alpha \in DP | \exists a \in P^{\#} : \downl{\{a\}} \subseteq \alpha  }\\
    ={} & \Set{ \alpha \in DP | \exists \beta \in D(P^{\#}) : \exists b \in \beta \land \beta \subseteq \alpha }
\end{align*}
and two combinators $\bf{k}, \bf{s} \in P^{\#}$ for $\mathbb{P}$ yield corresponding combinators $\downl{\{\bf{k}\}}, \downl{\{\bf{s}\}} \in (DP)^{\#}$ making $(DP, \subseteq, \cdot, (DP)^{\#})$ into a PCA which we denote as $D\mathbb{P}$. In particular, for a \emph{discrete} and \emph{absolute} PCA $\mathbb{P}$, we denote $D \mathbb{P}$ also as $\Pow {\mathbb P}$.

Defining, for $\alpha,\beta \in DP$:
\[ \alpha \arr \beta \coloneqq \Set{ a \in P | \downr{a\cdot \alpha} \mbox{ and }a\cdot \alpha \subseteq \beta} \]
and letting $S_{DP} = (DP)^{\#}$, \cite[Thm. 3.9]{berg2023arrow} shows how $(DP, \subseteq, \arr, S_{DP})$ is an arrow algebra which is compatible with joins. We denote this arrow algebra as $D\mathbb{P}$ as well, and in particular as $\Pow {\mathbb P}$ in the discrete and absolute case.

This definition makes so that the arrow tripos $P_{D\mathbb{P}}$  coincides with the realizability tripos induced by $\mathbb{P}$, and hence $\AT {D \mathbb{P}}$ coincides with the realizability topos $\RT {\mathbb{P}}$.

\section{A first category of arrow algebras}\label{sec:impmor}

In this section, we introduce the first notion of a morphism between arrow algebras we will see in this paper, namely that of \emph{implicative morphisms}, and we set up some first results which will be useful in the following.

By definition, an arrow algebra is a poset endowed with an implication operation and a specified subset: therefore, it would be natural to define morphisms of arrow algebras as monotone functions preserving implications (in some sense) and the specified subset. This intuition, obviously also valid for implicative algebras, is what leads to the definition of \emph{applicative morphisms} in \cite{ferrer2019category}, which partially inspires our definition. However, for reasons which will become clear in the following, we will not define our morphisms to be monotone with respect to the evidential order,\footnote{In a way, the evidential order is not the most important feature of an arrow algebra: arrow triposes, in fact, are defined by the Heyting prealgebras determined by the logical order.} but we will see how this will not actually be an issue. The downside is that, in general, we will have to impose a third condition -- automatically satisfied in case of monotonicity -- involving both implications and separators.

\subsection{Implicative morphisms}	Let $\mathcal{A} = (A, \evleq, \arr, S_A)$ and $\mathcal{B} = (B, \evleq,\arr, S_B)$ be two arrow algebras. 

\begin{definition}\label{def:implicative morphism of aas}
An \emph{implicative morphism} $f : \mathcal{A}\to \mathcal{B}$ is a function $f :A \to B$ satisfying:
	\begin{enumerate}[label=\roman*.]
		\item $f(a) \in S_B$ for every $a \in S_A$;
		\item there exists an element $r \in S_B$ such that $r \evleq f(a\arr a')\arr f(a)\arr f(a')$ for all ${a,a'\in A}$;
		\item for every subset $X \subseteq A \times A$,
        \[\mbox{if }\evmeet_{(a,a') \in X} a \arr a' \in S_A\; \mbox{ then } \evmeet_{(a,a') \in X} f(a) \arr f(a') \in S_B ,\]
	\end{enumerate}
	in which case we say that $f$ is \emph{realized} by $r \in S_B$.

 An order `up to a realizer' can be defined on implicative morphisms as follows. Given two implicative morphisms $f,f' : \mathcal{A}\to \mathcal{B}$, we write $f \vdash f'$ if there exists an element $u \in S_B$ such that $u \evleq f(a) \arr f'(a)$ for every $a\in A$, in which case we say that $f\vdash f'$ is \emph{realized} by $u$. In other words, this means that:
\[\evmeet_{a\in A} f(a) \arr f'(a) \in S_B\]
i.e. $f \vdash_A f'$ seeing $f$ and $f'$ as elements of the arrow algebra $\B^A$, so in particular it is also equivalent to $f\phi \vdash_I f'\phi$ for every set $I$ and every function $\phi : I \to A$.
\end{definition}

\begin{remark}\label{rem:monotone morphism}
If $f$ happens to be monotone with respect to the evidential order, then (iii) is a consequence of (i) and (ii). 

Indeed, given any $X\subseteq A \times A$ such that $\evmeet_{(a,a') \in X} a\arr a' \in S_A$:
\begin{itemize}
    \item[--]by (ii) we have:
    \[\evmeet_{(a,a') \in X} f(a \arr a')\arr f(a) \arr f(a') \in S_B;\]
    \item[--]as $f(\evmeet P) \evleq \evmeet f(P)$ for every subset $P\subseteq A$ by monotonicity, and by (i) and upward-closure of $S_B$:
    \[f\left(\evmeet_{(a,a')\in X} a\arr a'\right) \evleq \evmeet_{(a,a') \in X} f(a\arr a') \in S_B,\]
\end{itemize}
from which $\evmeet_{(a,a')\in X} f(a) \arr f(a') \in S_B$  by \cite[Lem. 3.6]{berg2023arrow}.

Therefore, to prove that a monotone function is an implicative morphism, we will systematically omit to check condition (iii).
\end{remark}

\begin{remark}
    Applicative morphisms of \cite{ferrer2019category} only satisfy condition (ii) for $a,a'\in A$ such that $a\vdash a'$, while they are monotone by definition. The two notions are hence incomparable in general.
\end{remark}

\begin{proposition}
	Arrow algebras, implicative morphisms and their order form a preorder-enriched category $\ArrAlg$. 
\end{proposition}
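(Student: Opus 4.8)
The plan is to check, one axiom at a time, the defining data of a preorder-enriched category: that identities and composites of implicative morphisms are again implicative morphisms, that $\vdash$ is reflexive and transitive on each hom-set, that composition is monotone with respect to $\vdash$, and that associativity and the unit laws hold. Since an implicative morphism is just a function carrying extra data and composition is ordinary function composition, associativity $(hg)f = h(gf)$ and the unit laws $\id \circ f = f = f \circ \id$ will hold strictly, on the nose, so the actual content lies in the closure properties and the order.

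First I would dispatch identities and reflexivity. For $\id_\A$, conditions (i) is immediate; (ii) reduces to exhibiting a realizer in $S_A$ for $\evmeet_{a,a'\in A}(a\arr a')\arr(a\arr a')$, which is the internalized identity supplied by the combinators in every separator, and it lies in $S_A$ since this meet dominates $\evmeet_{x\in A} x\arr x \in S_A$ and $S_A$ is upward closed; and (iii) follows from \autoref{rem:monotone morphism} as $\id_\A$ is monotone. Reflexivity $f \vdash f$ is the same computation applied to $\evmeet_{a} f(a)\arr f(a)$.

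The core of the argument is closure under composition. Given $f : \A\to\B$ realized by $r_f$ and $g : \B\to\mathcal{C}$ realized by $r_g$, I would verify the three clauses for $g\circ f$. Clause (i) chains the two separator-preservation properties. For (iii), the key observation is that $\{f(a)\arr f(a') : (a,a')\in X\}$ coincides, as a subset of $B$, with $\{b\arr b' : (b,b')\in Y\}$ where $Y$ is the image of $X$ under $f\times f$, so the relevant meets are literally equal; one then applies (iii) for $f$ followed by (iii) for $g$. The delicate clause is (ii): applying (iii) of $g$ to the family realized by $r_f$ produces a uniform realizer for $g(f(a\arr a'))\arr g(f(a)\arr f(a'))$, while instantiating (ii) of $g$ at $b = f(a),\, b' = f(a')$ yields a uniform realizer for $g(f(a)\arr f(a'))\arr g(f(a))\arr g(f(a'))$. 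These two realized implications are then combined by the intuitionistic composition tautology $(p\arr q)\arr(q\arr r\arr t)\arr(p\arr r\arr t)$ together with \cite[Prop. 3.5]{berg2023arrow} and two applications of the meet-wise modus ponens of \cite[Lem. 3.6]{berg2023arrow}, giving $\evmeet_{a,a'} g(f(a\arr a'))\arr g(f(a))\arr g(f(a')) \in S_{\mathcal{C}}$, which is exactly (ii). The very same composition-combinator argument gives transitivity of $\vdash$.

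Finally I would establish monotonicity of composition. Right-whiskering, $g\vdash g' \implies gf \vdash g'f$, is immediate: instantiate the realizer for $g\vdash g'$ at $b = f(a)$. Left-whiskering, $f\vdash f' \implies gf \vdash gf'$, is precisely where clause (iii) is indispensable: from $\evmeet_a f(a)\arr f'(a) \in S_B$ one applies (iii) of $g$ to the family $\{(f(a),f'(a)) : a\in A\}$ to obtain $\evmeet_a g(f(a))\arr g(f'(a)) \in S_{\mathcal{C}}$, i.e. $gf\vdash gf'$. Combining the two whiskerings through transitivity gives $gf\vdash g'f'$. I expect the main obstacle to be the realizer bookkeeping in clause (ii) for the composite; the conceptually subtle point — and the reason clause (iii) is imposed at all — is that left-whiskering must be justified in the absence of monotonicity of $g$, and (iii) is tailored to supply exactly this.
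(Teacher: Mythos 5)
Your proposal is correct and follows essentially the same route as the paper: closure of clause (ii) under composition via (iii) of $g$ applied to the family realized by $r_f$, then (ii) of $g$, combined by intuitionistic reasoning (Prop.~3.5 plus Lem.~3.6 of \cite{berg2023arrow}); identities realized by $\mathbf{i}$; left-whiskering via clause (iii) and right-whiskering by instantiating the realizer at $f(a)$. The only cosmetic difference is that you verify reflexivity and transitivity of $\vdash$ by hand, where the paper observes that each hom-set inherits its preorder as a subpreorder of $(B^A,\vdash_A)$.
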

\begin{proof}
First, let $f : \A \to \B$ and $g : \B \to \mathcal{C}$ be implicative morphisms; let us show that $gf : A \to C$ satisfies the definition of an implicative morphism $\A \to \mathcal{C}$. Condition (i) and (iii) are clearly compositional; to show condition (ii), instead, note that by (ii) for $f$ we know that:
\[\evmeet_{a,a'} f(a\arr a') \arr f(a) \arr f(a') \in S_B\]
from which, by (iii) for $g$:
\[\evmeet_{a,a'} gf(a\arr a') \arr g(f(a) \arr f(a')) \in S_C\]
Moreover, by (ii) for $g$ we know that:
\[\evmeet_{a,a'} g(f(a) \arr f(a')) \arr gf(a) \arr gf(a') \in S_C\]
from which, by intuitionistic reasoning:
\[\evmeet_{a,a'} gf(a\arr a') \arr gf(a) \arr gf(a') \in S_C\]

Then, for every arrow algebra $\A$, the identity function $\id_{A}$ is an implicative morphism $\A \to \A$, trivially realized by $\bf{i} \in S_A$ since we know that: 
     \[\bf{i} \evleq \evmeet_{a,a' \in A} (a\arr a') \arr a \arr a'\]     
This makes $\ArrAlg$ into a category.   The fact that $\vdash$ is a preorder on each homset $\ArrAlg(\A,\B)$ follows immediately as it is the subpreorder of $(B^A, \vdash_A)$ on implicative morphisms. Therefore, to conclude, we simply need to show that composition of implicative morphisms is order-preserving:
    \begin{itemize}
        \item[--]for  $f,f' : \mathcal{A}\to\mathcal{B}$ and $g : \mathcal{B}\to \mathcal{C}$ such that $f \vdash f'$; explicitly, this means that:
        \[\evmeet_{a\in A} f(a) \arr f'(a) \in S_B\]
        from which, by (iii) in \autoref{def:implicative morphism of aas}:
        \[\evmeet_{a\in A} gf(a) \arr gf'(a) \in S_C\]
        meaning that $g f \vdash g f'$;

        \item[--]for $f : \mathcal{A}\to \mathcal{B}$ and $g,g' : \mathcal{B}\to \mathcal{C}$, any realizer of $g \vdash g'$ also realizes $gf \vdash g'f$.
    \end{itemize}
\end{proof}

\begin{example}
    In a constructive metatheory, truth values are arranged in the frame $\Omega$ given by the powerset of the singleton $\{*\}$,\footnote{$\Omega$ is the initial object in the category of frames.} which we can see as an arrow algebra in the canonical way. For every arrow algebra $\A = (A, \evleq, \arr, S)$, we can then consider the characteristic function of the separator, defined constructively as:
\[\chi : A \to \Omega \qquad \chi(a) \coloneqq \Set{ * | a \in S } \]
Note that, by upward closure of the separator, $\chi$ is monotone. Indeed, if $a \evleq a'$, to show that $\chi(a) \subseteq \chi(a')$ suppose that $* \in \chi(a)$; then, $a \in S$, hence $a' \in S$ as well, i.e. $* \in \chi(a')$.

We then have that $\chi$ is an implicative morphism $\A \to \Omega$. 
\begin{enumerate}[label=\roman*.]
    \item If $a \in S$, then by definition $*\in \chi(a)$, which means that $\chi(a) = \{*\}$.
    \item Let $a,a'\in A$. Then, $\{*\} \subseteq \chi(a\arr a') \arr \chi(a) \arr \chi(a')$ is equivalent to $\chi(a\arr a') \subseteq \chi(a) \arr \chi(a')$. To show this, suppose $* \in \chi(a\arr a')$, meaning that $a\arr a' \in S$. So, $\chi(a\arr a') = \{*\}$, which means that we can show equivalently that $\chi(a) \subseteq \chi(a')$. Suppose then $*\in \chi(a)$ as well, meaning that $a \in S$; by modus ponens, it follows that $a' \in S$, i.e. $* \in \chi(a')$.
\end{enumerate}
\end{example}
    
The definition of an implicative morphism can be restated purely in terms of the logical order.

\begin{lemma}\label{lem:alt def of implicative morphism}
     Let $\A = (A,\evleq,\arr,S_A)$ and $\B = (B,\evleq, \arr, S_B)$ be arrow algebras. A function $f : A \to B$ is an implicative morphism $\A \to \B$ if and only if it satisfies:
\begin{enumerate}
    \item $\top \vdash f(\top)$;
    \item $f  (\pi_1 \arr \pi_2) \vdash_{A\times A} f \pi_1 \arr f  \pi_2$, where $\pi_1,\pi_2 : A \times A \to A$ are the two projections;
    \item $f \phi \vdash_I f \psi$ for every set $I$ and all $\phi, \psi : I \to A$ such that $\phi \vdash_I \psi$.
\end{enumerate}
\end{lemma}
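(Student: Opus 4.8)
The plan is to match the three conditions of the statement, one by one, against conditions (i)--(iii) of \autoref{def:implicative morphism of aas}, constantly using the two translations $a \vdash b \iff a \arr b \in S$ and $\phi \vdash_I \psi \iff \evmeet_{i \in I}\phi(i) \arr \psi(i) \in S$, together with the description of the separator as $\Set{ a \in A | \top \vdash a }$. The one genuine twist is that condition (1) is literally weaker than (i): it only asserts $f(\top) \in S_B$ rather than $f(a) \in S_B$ for all $a \in S_A$. So I would not aim at a strict condition-by-condition equivalence, but instead prove (2)$\iff$(ii) and (3)$\iff$(iii) outright, and then recover the full (i) from (1) together with (3).

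First I would dispatch (2)$\iff$(ii). Unwinding the pointwise structure of $\B^{A\times A}$ and the definition of the projections gives $f(\pi_1 \arr \pi_2)(a,a') = f(a\arr a')$ and $(f\pi_1 \arr f\pi_2)(a,a') = f(a)\arr f(a')$, so (2) reads $\evmeet_{(a,a')\in A\times A} f(a\arr a')\arr f(a)\arr f(a') \in S_B$. This is exactly the reformulation of (ii): if (ii) holds with realizer $r$, then $r$ is a lower bound of the family, hence $r \evleq$ the meet, which therefore lies in $S_B$ by upward closure; conversely the meet itself is a realizer witnessing (ii).

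Next I would prove (3)$\iff$(iii) by a routine reindexing. Given (iii) and $\phi,\psi : I \to A$ with $\phi \vdash_I \psi$, I would set $X = \Set{ (\phi(i),\psi(i)) | i \in I } \subseteq A\times A$; since a meet depends only on the set of values attained, $\evmeet_{(a,a')\in X} a\arr a' = \evmeet_{i\in I}\phi(i)\arr\psi(i) \in S_A$, whence $\evmeet_{(a,a')\in X} f(a)\arr f(a') = \evmeet_{i\in I} f\phi(i)\arr f\psi(i) \in S_B$, i.e. $f\phi \vdash_I f\psi$. Conversely, (iii) is precisely the instance of (3) obtained by taking $I = X$ and $\phi,\psi$ the restrictions of $\pi_1,\pi_2$ to $X$.

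Finally I would treat the asymmetry between (1) and (i), which is the only step requiring a little care and is thus the main obstacle, such as it is. The forward implication (i)$\implies$(1) is immediate since $\top \in S_A$. For the converse, assuming (1) and (3), let $a \in S_A$, i.e. $\top \vdash a$; applying (3) to the constant maps $\top, a : 1 \to A$ yields $f(\top) \vdash f(a)$, and combining with $\top \vdash f(\top)$ from (1) gives $\top \vdash f(a)$ by transitivity of the logical order, that is $f(a) \in S_B$. Assembling the three equivalences then proves the lemma; the only thing one must not overlook is that (1) by itself is too weak, and that (3) is exactly what reconstructs the full strength of (i).
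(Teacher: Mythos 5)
Your proposal is correct and follows essentially the same route as the paper: conditions (2) and (3) are recognized as direct rewritings of (ii) and (iii) via the pointwise structure of $\B^{A\times A}$ and reindexing of meets, and the only real content — recovering the full strength of (i) from the weaker (1) — is handled exactly as in the paper, by using (3) to get monotonicity of $f$ with respect to the logical order and then combining $\top \vdash f(\top)$ with $f(\top)\vdash f(a)$. Your use of transitivity of $\vdash$ in place of the paper's two applications of modus ponens is an immaterial difference.
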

\begin{proof}
Condition (2) is a rewriting of condition (ii) recalling that the Heyting implication in $\A^{A\times A}$ is computed pointwise, and condition (3) is a rewriting of condition (iii).

         Suppose now $f$ satisfies (i), (ii) and (iii). Then, $f(\top) \in S_B$ since $\top \in S_A$, which means that $\top \vdash f(\top)$. 

         Conversely, suppose that $f$ satisfies (1), (2) and (3). Note that condition (3) implies that $f$ is monotone with respect to the logical order: therefore, for $a\in S_A$ we have that $\top \vdash a$, and hence $f(\top) \vdash f(a)$. Then, $\top \vdash f(\top)$ implies by modus ponens that $f(\top) \in S_B$, from which $f(a) \in S_B$ as well.
\end{proof}

\begin{corollary}\label{cor:iso to an imp mor}
    If $f : \A \to \B$ is an implicative morphism and $f' : A \to B$ is such that $f \liso_A f'$ in $\B^A$, then $f'$ is an implicative morphism $\A \to \B$ as well.
\end{corollary}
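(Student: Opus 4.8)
The plan is to use the reformulation of implicative morphisms in terms of the logical order supplied by \autoref{lem:alt def of implicative morphism}, since the relation $f \liso_A f'$ is itself a purely logical-order notion: by definition it unpacks as $f \vdash_A f'$ and $f' \vdash_A f$. The single observation that makes everything go through is the one already recorded in \autoref{def:implicative morphism of aas}, namely that $f \vdash_A f'$ is equivalent to $f\phi \vdash_I f'\phi$ for every set $I$ and every $\phi : I \to A$. Hence from $f \liso_A f'$ we obtain $f\phi \liso_I f'\phi$ for \emph{every} such $\phi$; in particular the two inequalities hold at the constant map $\top$, at any pair of maps into $A$, and at the implication map $\pi_1 \arr \pi_2 : A \times A \to A$. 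With these transferred isomorphisms in hand, each of the conditions (1)--(3) of \autoref{lem:alt def of implicative morphism} for $f'$ reduces to the corresponding condition for $f$ by transitivity of the logical order.

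Concretely, I would first dispatch (1) and (3). For (1), applying the isomorphism pointwise at $\top$ gives $f(\top) \vdash f'(\top)$, and composing with $\top \vdash f(\top)$ (valid since $f$ is an implicative morphism) yields $\top \vdash f'(\top)$. For (3), given $\phi \vdash_I \psi$, I would chain
\[ f'\phi \vdash_I f\phi \vdash_I f\psi \vdash_I f'\psi, \]
where the outer two steps are instances of the transferred isomorphisms $f'\phi \liso_I f\phi$ and $f\psi \liso_I f'\psi$, while the middle step is condition (3) for $f$; transitivity of $\vdash_I$ closes the argument.

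The only step requiring slightly more than direct chaining is (2), which I therefore expect to be the main (if minor) obstacle. Here I would begin from
\[ f'(\pi_1 \arr \pi_2) \vdash_{A \times A} f(\pi_1 \arr \pi_2) \vdash_{A \times A} f\pi_1 \arr f\pi_2, \]
using the isomorphism at $\pi_1 \arr \pi_2$ followed by condition (2) for $f$. It then remains to pass from $f\pi_1 \arr f\pi_2$ to $f'\pi_1 \arr f'\pi_2$, and for this I would invoke the standard fact that in a Heyting prealgebra the implication is antitone in its first argument and monotone in its second with respect to the logical order: since $f'\pi_1 \vdash_{A \times A} f\pi_1$ and $f\pi_2 \vdash_{A \times A} f'\pi_2$ (again instances of the transferred isomorphism, computed pointwise), we obtain $f\pi_1 \arr f\pi_2 \vdash_{A \times A} f'\pi_1 \arr f'\pi_2$. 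Chaining the three steps yields (2) for $f'$, and by \autoref{lem:alt def of implicative morphism} this completes the proof. Throughout, I would rely only on the preorder structure of $\vdash_I$ and the monotonicity behaviour of the Heyting implication, so that no new realizers need be constructed by hand.
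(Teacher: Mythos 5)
Your proof is correct and matches the paper's intended argument: the paper states this result as an immediate corollary of \autoref{lem:alt def of implicative morphism}, precisely because conditions (1)--(3) there are formulated purely in terms of the logical order and therefore transfer along the isomorphism $f \liso_A f'$, which is exactly what you verify. The details the paper leaves implicit --- the equivalence between $f \vdash_A f'$ and $f\phi \vdash_I f'\phi$ for all $\phi : I \to A$, and the antitone/monotone behaviour of the Heyting implication in the Heyting prealgebra $(B^{A\times A}, \vdash_{A\times A})$ used for condition (2) --- are filled in correctly.
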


Many implicative morphisms we will see in the following are monotone with respect to the evidential order: as it turns out, this can always be assumed up to isomorphism. Therefore, in principle, we could substitute $\ArrAlg$ for an equivalent category where all morphisms are monotone; however, we will not go in this direction, for reasons which will become clear in the next sections.

On any arrow algebra $\A$ we can consider the map:
\[ \partial : A \to A \qquad \partial a \coloneqq \top \arr a \]
By \cite[Prop. 5.8]{berg2023arrow} we have that $\partial \liso_A \id_A$; in particular, by \autoref{cor:iso to an imp mor}, $\partial$ is an implicative morphism $\A \to \A$.

\begin{lemma}\label{lem:monotonicity}
    Every implicative morphism is isomorphic to a monotone one.
\end{lemma}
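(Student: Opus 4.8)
The plan is to exhibit, for an arbitrary implicative morphism $f : \A \to \B$, an isomorphic morphism that is genuinely monotone with respect to the evidential order. The natural candidate is to precompose or postcompose $f$ with the map $\partial$ introduced just above, since $\partial a = \top \arr a$ is already known to satisfy $\partial \liso_A \id_A$ and to be an implicative morphism $\A \to \A$. The key observation to exploit is that $\partial$ \emph{forces} monotonicity: because $\arr$ is monotone in its second argument, the assignment $a \mapsto \top \arr f(a)$ will turn out to be monotone with respect to $\evleq$ as soon as $f$ itself respects the evidential order in the weak sense encoded in the axioms of an implicative morphism. So the first step is to define $g \coloneqq \partial_{\B} \circ f$, i.e. $g(a) = \top \arr f(a)$, and to check that $g$ is monotone.

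First I would verify monotonicity of $g$. Suppose $a \evleq a'$ in $\A$. I want $g(a) = \top \arr f(a) \evleq \top \arr f(a') = g(a')$, and since $\arr$ is monotone in the second argument it suffices to produce the inequality at the level of realizers rather than pointwise in $\evleq$ — but in fact the cleanest route is to use axiom (iii) of \autoref{def:implicative morphism of aas}. From $a \evleq a'$ we get $\top \evleq a \arr a'$, hence $a \arr a' \in S_A$ (as $\top \in S_A$ and $S_A$ is upward closed), so taking $X = \{(a,a')\}$ gives $f(a) \arr f(a') \in S_B$. I would then show that the presence of a realizer of $f(a) \arr f(a')$ together with the definition of $\partial$ yields $\top \arr f(a) \evleq \top \arr f(a')$ on the nose in the evidential order; this is where the combinatorial content lives, using that a realizer $r \in S_B$ with $r \evleq f(a) \arr f(a')$ allows one to transform $\top \arr f(a)$ into $\top \arr f(a')$ via the composition combinator, so that the two are comparable evidentially after applying $\top \arr -$.

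Next I would establish that $g$ is an implicative morphism and that it is isomorphic to $f$. Isomorphism is immediate: by \cite[Prop. 5.8]{berg2023arrow} we have $\partial_{\B} \liso_B \id_B$, which at the level of the arrow algebra $\B^A$ gives $\partial_{\B} \circ f \liso_A f$, i.e. $g \liso_A f$. That $g$ is an implicative morphism then follows directly from \autoref{cor:iso to an imp mor}, since $f$ is one and $g \liso_A f$. Thus $g$ is simultaneously monotone (by the first step) and an implicative morphism isomorphic to $f$, which is exactly the claim.

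The main obstacle I anticipate is the monotonicity computation in the first step: turning the separator-membership $f(a) \arr f(a') \in S_B$, which is a statement about the \emph{logical} order, into a genuine \emph{evidential} inequality $\top \arr f(a) \evleq \top \arr f(a')$. The logical order is generally weaker than the evidential order, so one cannot simply read off $f(a) \evleq f(a')$; the role of $\partial = \top \arr -$ is precisely to absorb the realizer into an honest $\evleq$-comparison, and making this absorption precise — presumably via intuitionistic reasoning or an explicit combinator applied to $\top$ — is the delicate point. Everything after that is formal, resting on \autoref{cor:iso to an imp mor} and the already-cited fact $\partial \liso \id$.
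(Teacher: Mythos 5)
Your candidate $g \coloneqq \partial_{\B} \circ f$ does not work: the step you yourself flag as delicate --- converting $f(a) \arr f(a') \in S_B$ into the evidential inequality $\top \arr f(a) \evleq \top \arr f(a')$ --- is not just unproven but false, and no combinator can rescue it. The evidential order is simply the order of the underlying complete meet-semilattice; membership of $f(a)\arr f(a')$ in the separator is a statement about the \emph{logical} order and imposes no relation whatsoever between the two semilattice elements $\top\arr f(a)$ and $\top \arr f(a')$, because an arrow algebra has no operation that ``applies'' a realizer to an element (that is exactly what distinguishes $\evleq$ from $\vdash$). Concretely, in $\A = \Pow{\mathcal{K}_1}$ take $x = \{1\}$ and $y = \{0\}$: then $x \arr y$ contains a code of the constant-$0$ function, so $x \arr y \in S$ (indeed $x \liso y$), yet a code of the constant-$1$ function lies in $\top \arr x$ and not in $\top \arr y$, so $\top\arr x \not\evleq \top \arr y$. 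Worse, in $\Pow{\mathbb{P}}$ for any discrete absolute PCA $\mathbb{P}$ your construction provably gains nothing: if $x \in f(a)\setminus f(a')$, the code of the total constant-$x$ function lies in $\top \arr f(a)$ but not in $\top \arr f(a')$, so $\partial \circ f$ is monotone \emph{if and only if} $f$ already is. Thus in the motivating realizability examples your $g$ is monotone exactly when there was nothing to prove. (Also, the remark ``since $\arr$ is monotone in the second argument it suffices to produce the inequality at the level of realizers'' is backwards: monotonicity in the second argument would require the evidential inequality $f(a)\evleq f(a')$ of the consequents, which is precisely what a non-monotone $f$ fails to provide.)

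The paper distributes the labor the opposite way. It defines $f'(a) \coloneqq \evmeet_{a \evleq a'} \partial f(a')$, the meet of $\partial f(a')$ over the upward closure of $a$; monotonicity is then automatic for purely order-theoretic reasons (if $a \evleq b$, the meet defining $f'(b)$ ranges over a smaller index set), and all the real work goes into proving $f \liso_A f'$, after which \autoref{cor:iso to an imp mor} applies just as in your argument. This is also where $\partial$ genuinely earns its keep: the direction $f \vdash_A f'$ is obtained from the combinator $\bf{a} \in S_B$, which allows a meet of implications whose consequents have the form $\top \arr (-)$ to be pushed inside an implication with a common antecedent --- so $\partial$ appears to make $\bf{a}$ applicable to the family $\left( f(a) \arr \partial f(a') \right)_{a \evleq a'}$, not as a pointwise ``monotonizer.'' Your second half (the isomorphism $\partial_{\B}\circ f \liso_A f$ via $\partial \liso \id$, plus \autoref{cor:iso to an imp mor}) is correct but is attached to a candidate that cannot be made monotone; repairing the proof requires replacing the candidate by something like the paper's $f'$, at which point the isomorphism is no longer free.
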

\begin{proof}
Let $f : \A \to \B$ be an implicative morphism and consider the monotone function:
        \[f ' : A \to B \qquad f'(a) \coloneqq \evmeet_{a \evleq a'} \partial f(a')\]
        Let us show that $f\liso_A f'$, which in particular implies that $f'$ is an implicative morphism $\A \to \B$ by the previous corollary. 

        On one hand, $\partial \vdash_B \id_B$ gives:
        \[ \evmeet_a (\partial f(a) ) \arr f(a) \in S_B \]
        from which, since $\evmeet_{a\evleq a'} \partial f(a') \evleq \partial f(a)$ and by upward-closure of $S_B$:
        \[ \evmeet_a (\evmeet_{a \evleq a'} \partial f(a') ) \arr f(a) \in S_B\]
        i.e. $f' \vdash_A f$.
            
        On the other hand, $f \vdash_A f'$ explicitly reads as:
          \[  \evmeet_a f(a) \arr \left(  \evmeet_{a\evleq a'} \top\arr f(a') \right) \in S_B \]
            Note that, since $\bf{a} \in S_B$:
\[			\evmeet_{a} \left( \evmeet_{a\evleq a'}  f(a) \arr \top \arr f(a') \right) \arr f(a) \arr  \left(  \evmeet_{a\evleq a'} \top \arr f(a') \right) \in S_B
\]
Therefore, $f \vdash_A f'$ is ensured by \cite[Lem. 3.6]{berg2023arrow} if we show:
            \[\evmeet_{(a,a') \in I} f(a) \arr \partial f(a') \in S_B\]
            where $I \coloneqq \Set{(a,a') \in A \times A | a \evleq a'}$. By intuitionistic reasoning, this is ensured by:
            \begin{gather}
            \evmeet_{(a ,a')\in I} f(a) \arr f(a') \in S_B \\
			\evmeet_{(a ,a')\in I}  f(a') \arr \partial f(a') \in S_B 
            \end{gather}
            where (1) follows since $f$ is an implicative morphism and $\bf{i}\in S_A$ witnesses the fact that $\evmeet_{(a,a') \in I} a \arr a' \in S_A$, and (2) follows since $\id_B \vdash_B \partial$. 
\end{proof}

\begin{remark}\label{rem:monotonization}
Let $Mf \coloneqq f'$ be the monotone implicative morphism defined above. Then, $Mf \liso f$ immediately makes the association $f \mapsto Mf$ into a pseudofunctor ${M : \ArrAlg\to\ArrAlg}$.
\end{remark}

\section{Examples of implicative morphisms I}\label{sec:eximp1}
Let us now consider the two main classes of implicative morphisms, corresponding to two main examples of arrow algebras: those arising from frame homomorphisms, and those arising from morphisms of PCAs.

\subsection{Frames}
As we know, every frame can be canonically seen as an arrow algebra by choosing $\{\top\}$ as the separator. Then, any morphism of frames $f : \O(Y) \to \O(X)$, a (necessarily monotone) function which preserves finite meets and arbitrary joins, is also an implicative morphism. Indeed:
\begin{enumerate}[label=\roman*.]
\item $f(\top) = \top$ as $f$ preserves finite meets;
\item for $y,y'\in \O(Y)$ we know that $y \land (y\arr y') \leq y'$, so by monotonicity and meet-preservation $f(y) \land f(y\arr y') \leq f(y')$, meaning that $f(y\arr y') \leq f(y) \arr f(y')$ and therefore $\top \leq f(y\arr y') \arr f(y) \arr f(y')$.
\end{enumerate}

\begin{remark}\label{rem:lex map of frames}
    As emerges from the above reasoning, more generally we have that any function $f : \O(Y) \to \O(X)$ which preserves finite meets is an implicative morphism. 
\end{remark}

Recall moreover that $\Frm$ is preorder-enriched with respect to the pointwise order: therefore, given two frame homomorphisms $f,f' : \O(Y) \to \O(X)$, we have that $f \leq f'$ in $\Frm(\O(Y), \O(X))$ if and only if $f \vdash f'$ in $\ArrAlg(\O(Y), \O(X))$, since $\vdash_I$ coincides with the pointwise order for every set $I$. In other words, the inclusion determines a {2-functor} $\Frm \to \ArrAlg$ with the additional property that each map $\Frm(\O(Y),\O(X)) \hookrightarrow \ArrAlg(\O(Y),\O(X))$ (preserves and) reflects the order.

While obviously faithful, this inclusion is far from being full. Indeed, consider the initial frame $\Omega$ and let $\O(X)$ be a frame such that $\bot \neq\top$: then, the unique frame homomorphism $\Omega \to \O(X)$ is given by $p \mapsto \biglor \Set{\top | * \in p}$, whereas the constant function of value $\top$ is an implicative morphism $\Omega \to \O(X)$. As we will see in \autoref{sec:examples of morphisms}, frame homomorphisms coincide with \emph{computationally dense} implicative morphisms, while implicative morphisms between frames coincide simply with monotone functions preserving finite meets -- thus reversing the previous remark.

\subsection{Partial combinatory algebras}
The study of morphisms of PCAs in relation with induced morphisms of the associated triposes and toposes was initiated, in the discrete and absolute case, by John Longley in his PhD thesis \cite{longley95}. The notion of \emph{computational density} was introduced in 
\cite{hvO03} in the ordered case to characterize geometric morphisms of realizability toposes (see also \cite{johnstone2013b}, where computational density was also given a simpler reformulation), and then it was lifted to the ordered and relative case in \cite{hofstra2006} by considering morphisms of BCOs. We now briefly summarize the definitions we will need, as usual following the account given in \cite{zoethout2022}.

 Let $\AA = (A, \leq, \cdot, A^{\#})$ and $\BB = (B, \leq, \cdot, B^{\#})$ be two PCAs. 

\begin{definition}[\protect{\cite[Def. 2.2.1]{zoethout2022}}]\label{morphism of pcas}
A \emph{morphism of PCAs} $\mathbb{A}\to \mathbb{B}$ is a function $f : A\to B$ satisfying:
\begin{enumerate}[label = \roman*.]
	\item $f(a) \in B^{\#}$ for every $a \in A^{\#}$;
	\item there exists an element $t \in B^{\#}$ such that if $\downr{a a'}$ then $\downr{t f(a)f(a')}$ and $tf(a)f(a') \leq f(aa')$;
	\item there exists an element $u \in B^{\#}$ such that if $a\leq a'$ then $\downr{uf(a)}$ and $uf(a) \leq f(a')$,
\end{enumerate}
in which case we say that $f$ is \emph{realized} by $t, u\in B^{\#}$, or that it preserves application up to $t$ and order up to $u$.

An order `up to a realizer' can be defined on morphisms of PCAs as follows. Given two morphisms $f,f' : \mathbb{A}\to \mathbb{B}$, we write $f \leq f'$ if there exists some $s \in B^{\#}$ such that $\downr{sf(a)}$ and $s f(a) \leq f'(a)$ for every $a\in A$, in which case we say that $f\leq f'$ is \emph{realized} by $s$.

PCAs, morphisms of PCAs and their order form a preorder-enriched category $\oPCA$.

\end{definition}

\begin{definition}[\protect{\cite[Lem. 2.2.12]{zoethout2022}}]
	A morphism of PCAs $f : \mathbb{A}\to \mathbb{B}$ is \emph{computationally dense} if there exists an element $m\in B^{\#}$ with the property that for every $s \in B^{\#}$ there is some $r \in A^{\#}$ such that  $mf(ra)\preceq sf(a)$ for every $a \in A$.
\end{definition}

The construction $\mathbb{P} \mapsto D\mathbb{P}$ lifts to a pseudomonad on $\oPCA$ (\cite[Prop. 2.3.1]{zoethout2022}), through which a new notion of morphism of PCAs can be defined.

\begin{definition}[\protect{\cite[Def. 2.3.2]{zoethout2022}}]
Let $\oPCA_D$ be the preorder-enriched bicategory defined as the Kleisli bicategory of the pseudomonad $D : \oPCA \to \oPCA$. Explicitly, $\oPCA_D$ is the category having PCAs as objects, and morphisms of PCAs $\AA \to D \BB$ as morphisms $\AA \to \BB$, which we call \emph{partial applicative morphisms}.

A morphism in $\oPCA_D$ is \emph{computationally dense} if it is so as a morphism of PCAs. Explicitly, a partial applicative morphism $f : \AA \to \BB$ is then computationally dense if there exists an element $m\in B^{\#}$ with the property that for every $s\in B^{\#}$ there is some $r \in A^{\#}$ satisfying $\downr{mf(ra)}$ and $mf(ra)\subseteq sf(a)$ for every $a \in A$ such that $\downr{sf(a)}$. 
\end{definition}

Let us now move to the level of arrow algebras. 

First, let us show how every morphism of PCAs $D \AA \to D\BB$ is an implicative morphism with respect to the canonical arrow structures on $D\AA$ and $D\BB$.

\begin{lemma}\label{lem:applicative implicative}
    Let $f : D \AA \to D \BB$ be a morphism of PCAs. Then, $f$ is also an implicative morphism $D \AA \to D \BB$.
\end{lemma}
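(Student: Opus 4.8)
The plan is to verify directly the three conditions of \autoref{def:implicative morphism of aas} for $f$, exploiting that $D\AA$ and $D\BB$ carry both their arrow-algebra structure (order $\subseteq$, separator $(DA)^{\#}$ resp.\ $(DB)^{\#}$, implication $\alpha\arr\beta=\{\,a : \downr{a\cdot\alpha}\text{ and }a\cdot\alpha\subseteq\beta\,\}$, and meets given by intersection) and the ambient PCA application that a morphism of PCAs controls. Two elementary facts will do the work. First, a \emph{currying} equivalence: for downward-closed $\alpha,\beta,\gamma$, one has $\alpha\subseteq\beta\arr\gamma$ if and only if $\downr{\alpha\cdot\beta}$ and $\alpha\cdot\beta\subseteq\gamma$. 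Second, its \emph{evaluation} instance obtained by taking $\alpha=\beta\arr\gamma$, namely $\downr{(\beta\arr\gamma)\cdot\beta}$ and $(\beta\arr\gamma)\cdot\beta\subseteq\gamma$. Condition (i) is then immediate, since the arrow-algebra separator of $D\AA$ is exactly its PCA filter $(DA)^{\#}$, and likewise for $D\BB$, so condition (i) for $f$ as a morphism of PCAs is literally condition (i) for $f$ as an implicative morphism.

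For condition (ii), I would transport the evaluation fact $(\alpha\arr\alpha')\cdot\alpha\subseteq\alpha'$ through $f$. Writing $t,u\in(DB)^{\#}$ for the realizers witnessing that $f$ preserves application and order respectively, application-preservation first gives $t\,f(\alpha\arr\alpha')\,f(\alpha)\subseteq f\bigl((\alpha\arr\alpha')\cdot\alpha\bigr)$, the left-hand side being defined because $(\alpha\arr\alpha')\cdot\alpha$ is; then order-preservation applied to $(\alpha\arr\alpha')\cdot\alpha\subseteq\alpha'$ gives $u\cdot f\bigl((\alpha\arr\alpha')\cdot\alpha\bigr)\subseteq f(\alpha')$. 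Since application of downward-closed sets is monotone and its domain of definition is downward closed, these combine to $u\bigl(t\,f(\alpha\arr\alpha')\,f(\alpha)\bigr)\subseteq f(\alpha')$. Finally, combinatory completeness of the PCA $D\BB$ produces a single $r\in(DB)^{\#}$ (built from $t,u$ and the combinators, hence in the filter) with $r\,x\,y\subseteq u(t\,x\,y)$ for all $x,y$; applying the currying equivalence twice turns $r\,f(\alpha\arr\alpha')\,f(\alpha)\subseteq f(\alpha')$ into $r\subseteq f(\alpha\arr\alpha')\arr f(\alpha)\arr f(\alpha')$, which is condition (ii).

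Condition (iii) runs in exact parallel, the only change being the source of the realizer. Given $X\subseteq DA\times DA$ with $\bigcap_{(\alpha,\alpha')\in X}(\alpha\arr\alpha')\in(DA)^{\#}$, I extract a single element $w\in A^{\#}$ lying in every $\alpha\arr\alpha'$, so that $\omega\coloneqq\downl{\{w\}}$ satisfies $\omega\cdot\alpha\subseteq\alpha'$ for all $(\alpha,\alpha')\in X$, with $f(\omega)\in(DB)^{\#}$ by (i). Transporting exactly as above yields $u\bigl(t\,f(\omega)\,f(\alpha)\bigr)\subseteq f(\alpha')$, and combinatory completeness gives $r\coloneqq\lambda^{*}y.\,u(t\,f(\omega)\,y)\in(DB)^{\#}$ with $r\cdot f(\alpha)\subseteq f(\alpha')$ uniformly in $(\alpha,\alpha')\in X$; choosing any $b\in r\cap B^{\#}$ produces an element of $B^{\#}$ lying in every $f(\alpha)\arr f(\alpha')$, that is $\bigcap_{(\alpha,\alpha')\in X}\bigl(f(\alpha)\arr f(\alpha')\bigr)\in(DB)^{\#}$.

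I expect the main obstacle to be the bookkeeping forced by the fact that a morphism of PCAs is only monotone \emph{up to the realizer} $u$, not on the nose: this is precisely why \autoref{rem:monotone morphism} cannot simply be invoked to deduce (iii) from (i) and (ii), and why $u$ must be threaded through each containment, with care that every intermediate application stays defined (using that definedness of $u\cdot(-)$ is downward closed). A secondary, routine point is the passage between a set-level realizer $r\in(DB)^{\#}$ and the element-level realizer $b\in B^{\#}$ needed to witness membership in the separator in condition (iii).
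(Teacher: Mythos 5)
Your proposal is correct and follows essentially the same route as the paper's proof: verify the three conditions of \autoref{def:implicative morphism of aas} directly, with (i) being immediate from the coincidence of separator and filter, and (ii)--(iii) obtained by threading the application realizer and the order realizer through the evaluation fact $(\alpha\arr\beta)\cdot\alpha\subseteq\beta$ and packaging the result via combinatory completeness (the paper's $\rho\coloneqq\lambda^*v,w.\,\upsilon(\tau v w)$ and $\rho\coloneqq\lambda^*w.\,\upsilon(\tau f(\sigma)w)$ are exactly your $r$'s, with your $\omega=\downl{\{w\}}$ playing the role of the paper's $\sigma\in(DA)^{\#}$). The only cosmetic differences are your explicit ``currying'' formulation of $\alpha\subseteq\beta\arr\gamma$ and the final passage through an element $b\in r\cap B^{\#}$, where the paper instead concludes by upward closure of the separator.
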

\begin{proof}
Let us verify the three conditions in \autoref{def:implicative morphism of aas}.
        \begin{enumerate}[label=\roman*.]
\item         Condition (i) is ensured by (i) in \autoref{morphism of pcas} since $S_{DA} = (DA)^{\#}$ and $S_{DB} = (DB)^{\#}$.

\item To show condition (ii), we need to find some $\rho \in (DB)^{\#}$ such that $\rho \subseteq {f}(\alpha\arr\beta)\arr {f}(\alpha) \arr {f}(\beta)$ for every $\alpha,\beta \in DA$. By definition, recall that:
        \begin{itemize}
            \item[--]there exists $\tau \in (DB)^{\#}$ such that if $\downr{\alpha \alpha'}$, then $\downr{\tau f(\alpha) f(\alpha')}$ and $\tau f(\alpha) f(\alpha') \subseteq f(\alpha \alpha')$;
            \item[--]there exists $\upsilon \in (DB)^{\#}$ such that if $\alpha \subseteq \alpha'$ then $\downr{\upsilon f(\alpha)}$ and $\upsilon f(\alpha) \subseteq f(\alpha')$.
        \end{itemize}
By combinatory completeness, consider then:
\[\rho \coloneqq (\lambda^* v, w . \upsilon( \tau v w ) ) \in (DB)^{\#}\]

Since $\downr{ (\alpha \arr \beta)\cdot \alpha}$, we know that:
\[\downr{\tau \ f(\alpha\arr\beta) \ f(\alpha)} \quad \mbox{and} \quad \tau \ f(\alpha\arr \beta) \ f(\alpha) \subseteq f( (\alpha\arr \beta) \cdot \alpha)\]
Since moreover $(\alpha\arr\beta)\cdot \alpha \subseteq \beta$, we also know that:
\[ \downr{\upsilon \ f( (\alpha\arr \beta) \cdot \alpha)}\quad \mbox{and} \quad \upsilon \ f( (\alpha\arr \beta) \cdot \alpha) \subseteq f(\beta) \]
So, by downward-closure of the domain of the application:
\[\downr{ \upsilon (\tau \ f(\alpha\arr\beta) \ f(\alpha))}\quad \mbox{and} \quad \upsilon (\tau \ f(\alpha\arr\beta) \ f(\alpha) ) \subseteq f(\beta)\]

Therefore: 
\[\downr{\rho \ f(\alpha\arr \beta) \ f(\alpha)} \quad \mbox{and} \quad \rho \ f(\alpha\arr \beta) \ f(\alpha)\subseteq f(\beta) \]
or, in other words:
\[\rho \subseteq f(\alpha \arr \beta) \arr f(\alpha) \arr f(\beta)\]

\item Let $X \subseteq DA \times DA$ be such that there exists some $\sigma \in (DA)^{\#}$ satisfying $\sigma \subseteq \alpha \arr \beta$ for every $(\alpha,\beta) \in X$. To show condition (iii), we need to find some $\rho \in (DB)^{\#}$ such that $\rho \subseteq f(\alpha) \arr f(\beta)$ for every $(\alpha,\beta) \in X$.

By combinatory completeness, since $f(\sigma) \in (DB)^{\#}$ by condition (i), consider then:
\[\rho \coloneqq (\lambda^* w . \upsilon (\tau f(\sigma) w )) \in (DB)^{\#}  \]
Since $\downr{\sigma \cdot \alpha}$ and $\sigma \cdot \alpha \subseteq \beta$, exactly as above we have that:
\[\downr{ \upsilon (\tau \ f(\sigma) \ f(\alpha))}\quad \mbox{and} \quad \upsilon (\tau \ f(\sigma) \ f(\alpha) ) \subseteq f(\beta)\]

Therefore:
\[\downr{\rho \ f(\alpha)} \quad \mbox{and}\quad \rho \ f(\alpha) \subseteq f(\beta) \]
or, in other words:
\[ \rho \subseteq f(\alpha) \arr f(\beta)\]
        \end{enumerate}
\end{proof}

\begin{remark}
    The two orders also coincide, by definition of the implication in downsets PCAs: given two morphisms of PCAs $f,f' : D \AA \to D \BB$, then $f \leq f'$ in $\Hom \oPCA {D\AA} {D\BB}$ if and only if $f \vdash f'$ in $\Hom \ArrAlg {D\AA} {D\BB}$.
\end{remark}

Let now $f : \AA \to \BB$ be a partial applicative morphism, i.e. a morphism of PCAs $\AA \to D \BB$. Then, $f$ corresponds to an essentially unique $D$-algebra morphism $\tilde{f} : D\AA \to D \BB$ which, up to isomorphism, we can describe as:
\[\tilde{f}(\alpha) \coloneqq \bigcup_{a\in \alpha} f(a)\]
The association $f \mapsto \tilde{f}$ is 2-functorial\footnote{As noted in \protect{\cite[Rem. 2.3.4]
{zoethout2022}}, $\oPCA_D$ is only a bicategory and not a preorder-enriched category, but since compositions are defined on the nose we can still speak of 2-functors rather than pseudofunctors.} on $\oPCA_D$, and it realizes an equivalence of preorder categories between partial applicative morphisms $\AA \to \BB$ and $D$-algebra morphisms $D\AA \to D \BB$. Together with the previous lemma and remark, this immediately implies the following.

\begin{proposition}\label{prop:tildeD}
    The assignment $f \mapsto \tilde{f}$ determines a $2$-functor $\tilde{D} :
        \oPCA_D \to \ArrAlg$.
    
    Moreover, for all PCAs $\AA$ and $\BB$, the map $\Hom{\oPCA_D}{\AA}{\BB} \to \Hom\ArrAlg{D\AA}{D\BB}$ (preserves and) reflects the order.
\end{proposition}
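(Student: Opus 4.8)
The plan is to assemble the statement from three facts already in place: \autoref{lem:applicative implicative}, the remark following it that identifies the two orders on morphisms $D\AA \to D\BB$, and the comparison $f \mapsto \tilde{f}$ discussed just above, which is 2-functorial into $\oPCA$ and whose hom-components are equivalences of preorders between partial applicative morphisms $\AA \to \BB$ and $D$-algebra morphisms $D\AA \to D\BB$. Concretely, I would factor $\tilde{D}$ as a composite of two 2-functors, so that the only genuine verification — that the various notions of composition match up — is split cleanly between the two factors.

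First I would isolate the Kleisli comparison $K : \oPCA_D \to \oPCA$, given on objects by $\AA \mapsto D\AA$ and on morphisms by $f \mapsto \tilde{f}$, landing in the full sub-2-category of $\oPCA$ on the free algebras $D\AA$. Its 2-functoriality — preservation of identities, of Kleisli composites, and of the hom-preorders — is exactly what is recorded in the discussion preceding the statement, so I would cite it rather than reprove it.

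Next I would define a re-interpretation $R$ from that sub-2-category to $\ArrAlg$: on objects it reads the PCA $D\AA$ as the arrow algebra of \autoref{subsec:pcas}, and on a morphism of PCAs $g : D\AA \to D\BB$ it returns the same underlying function, which is an implicative morphism by \autoref{lem:applicative implicative}. Since composition and identities in both $\oPCA$ and $\ArrAlg$ are computed as composition and identity of underlying functions, $R$ preserves them; and since the remark following \autoref{lem:applicative implicative} gives $g \leq g'$ in $\oPCA$ iff $g \vdash g'$ in $\ArrAlg$, the map $R$ preserves and reflects the order on each hom-preorder. Thus $R$ is a 2-functor, and I would set $\tilde{D} \coloneqq R \circ K$, a 2-functor $\oPCA_D \to \ArrAlg$ sending $f$ to $\tilde{f}$. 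For the \emph{moreover} clause I would chase the order along $R \circ K$: if $\tilde{f} \vdash \tilde{f}'$ in $\ArrAlg$ then $\tilde{f} \leq \tilde{f}'$ in $\oPCA$ by the remark, whence $f \leq f'$ in $\oPCA_D$ since the $K$-component is an equivalence of preorders; preservation of the order is immediate as both components are monotone.

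The main obstacle here is bookkeeping rather than mathematics: one must ensure that the two identifications of composition — Kleisli composition in $\oPCA_D$ against ordinary composition of extensions, and composition of morphisms of PCAs against composition of implicative morphisms — genuinely agree. Factoring $\tilde{D}$ through the intermediate sub-2-category is precisely what makes this transparent, since $K$ absorbs all the Kleisli bookkeeping while $R$ is literally the identity on underlying functions.
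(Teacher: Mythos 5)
Your proposal is correct and matches the paper's own argument: the paper likewise obtains the result immediately by combining the 2-functoriality of $f \mapsto \tilde{f}$ from the Kleisli setup, \autoref{lem:applicative implicative} (morphisms of PCAs $D\AA \to D\BB$ are implicative morphisms), and the remark that the two orders on such morphisms coincide. Your explicit factorization through $K$ and $R$ is just a cleaner bookkeeping of the same three ingredients.
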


\begin{remark}
The maps $\Hom{\oPCA_D}{\AA}{\BB} \to  \Hom \ArrAlg {D\AA}{ D \BB}$ defined by $\tilde{D}$ are obviously not essentially surjective, meaning that $\tilde{D}$ is not 2-fully faithful. Indeed, any morphism of PCAs $D \AA \to D\BB$ is an implicative morphism $D \AA \to D\BB$, but obviously only those which are union-preserving are $D$-algebra morphisms and therefore arise as $\tilde{D}f$ for some partial applicative morphism $f : \AA \to \BB$. We will see more about the interplay of these notions in \autoref{sec:examples of morphisms}.  
\end{remark}

\begin{question}
    How does the $\operatorname{PER}(-)$ construction of \cite[Thm. 3.10]{berg2023arrow} behave with respect to (partial applicative) morphisms of PCAs and implicative morphisms?
\end{question}

\section{Transformations of arrow triposes}\label{ch:arrtrip}
We have finally arrived at the heart of this paper. In this section, we further the study of implicative morphisms and their relations with transformations of arrow triposes, lifting the association $\A \mapsto P_{\A}$ to a 2-functor defined on a suitable category of arrow algebras. The main goals, in this perspective, are the following.
\begin{enumerate}[label=\roman*.]
    \item First, we will characterize implicative morphisms $\A \to \B$ as those functions $A \to B$ which induce by postcomposition a cartesian transformation of arrow triposes $P_{\A} \to P_{\B}$. 
    \item Then, we will determine a suitable notion of \emph{computational density} which characterizes those implicative morphisms $\A\to \B$ such that the induced transformation $P_{\A} \to P_{\B}$ has a right adjoint, hence corresponding to geometric morphism of triposes $P_{\B}\to P_{\A}$.
\end{enumerate}

\subsection{Cartesian transformations of arrow triposes}
Let us start with a lemma we will make use of in the following. Recall that, given any two arrow algebras $\A = (A, \evleq, \arr, S_A)$ and $\B = (B, \evleq, \arr, S_B)$, for every set $I$ we can consider the arrow algebras 
$\A^I = (A^I, \evleq, \arr, S_A^I)$ and $\B^I = (B^I, \evleq, \arr, S_B^I)$.

\begin{lemma}
Let $f : \A \to \B$ be an implicative morphism. 

For every set $I$, $f^I \coloneqq f \circ -$ is an implicative morphism $\A^I\to \B^I$.
\end{lemma}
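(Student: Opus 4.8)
The plan is to prove the statement through the logical-order characterization of \autoref{lem:alt def of implicative morphism} rather than by checking \autoref{def:implicative morphism of aas} directly. The reason is that the subtle condition~(i), preservation of the separator, would require pushing an \emph{arbitrary} meet through $f$ (from $\evmeet_{i}\phi(i)\in S_A$ one wants $\evmeet_{i} f(\phi(i))\in S_B$), and separators are not closed under arbitrary meets, so this cannot be handled by a realizer argument. Going through \autoref{lem:alt def of implicative morphism} absorbs separator preservation into the logical-order monotonicity that its condition~(3) already encodes. I would therefore reduce to verifying conditions (1), (2) and (3) of that lemma for $f^I$, keeping in mind throughout that meets and implications in $\A^I$ and $\B^I$ are computed pointwise from those of $\A$ and $\B$, and that $f^I$ is postcomposition with $f$.

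Conditions (1) and (2) are routine. For (1), the top element of $\A^I$ is the constant function $\top$, so $f^I(\top)$ is the constant function $f(\top)$, whose meet over $I$ is $f(\top)\in S_B$ (as $\top\in S_A$); hence $f^I(\top)\in S_B^I$, which is exactly $\top\vdash f^I(\top)$. For (2), unwinding the pointwise definitions shows that $f^I(\pi_1\arr\pi_2)\vdash_{A^I\times A^I} f^I\pi_1\arr f^I\pi_2$ amounts to
\[\evmeet_{\phi,\phi'\in A^I,\; i\in I} f(\phi(i)\arr\phi'(i))\arr f(\phi(i))\arr f(\phi'(i))\in S_B,\]
and the uniform realizer $r\in S_B$ of condition~(ii) for $f$ satisfies $r\evleq f(a\arr a')\arr f(a)\arr f(a')$ for all $a,a'$, hence lies below every term of this meet; upward closure of $S_B$ then gives the claim.

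The main step, and the only real obstacle, is condition~(3); the key device is the transpose bijection $(A^I)^K\cong A^{K\times I}$ sending $\Phi:K\to A^I$ to $\hat\Phi(k,i)\coloneqq\Phi(k)(i)$. Because the structure of $\A^I$ is pointwise, this bijection is an isomorphism of arrow algebras $(\A^I)^K\cong\A^{K\times I}$; in particular it identifies the logical order $\vdash_K$ on $(A^I)^K$ with $\vdash_{K\times I}$ on $A^{K\times I}$, since in both cases the defining condition unfolds (by collapsing the two nested pointwise meets into a single meet) to $\evmeet_{(k,i)}(\hat\Phi(k,i)\arr\hat\Psi(k,i))\in S_A$. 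Moreover $\widehat{f^I\Phi}=f\circ\hat\Phi$, so under the transpose postcomposition with $f^I$ becomes postcomposition with $f$. Now given $\Phi,\Psi:K\to A^I$ with $\Phi\vdash_K\Psi$, transposing yields $\hat\Phi\vdash_{K\times I}\hat\Psi$; condition~(3) for $f$ applied at the set $K\times I$ gives $f\hat\Phi\vdash_{K\times I}f\hat\Psi$, and transposing back gives $f^I\Phi\vdash_K f^I\Psi$, which is condition~(3) for $f^I$. The only point demanding care is the bookkeeping of this currying, namely checking that the separators of $(\A^I)^K$ and of $\A^{K\times I}$ genuinely agree under the transpose; once this is in place, \autoref{lem:alt def of implicative morphism} concludes that $f^I$ is an implicative morphism $\A^I\to\B^I$.
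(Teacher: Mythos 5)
Your proof is correct and takes essentially the same route as the paper: the paper likewise reduces separator preservation to showing $f^I(\top_I) \in S_B^I$, realizes condition (ii) by the constant function at a realizer of $f$, and proves condition (iii) by collapsing the nested meets $\evmeet_{i}\evmeet_{j}$ into a single meet over pairs---which is exactly your transpose $(A^I)^K \cong A^{K\times I}$, stated with subsets of $A\times A$ rather than indexed families. The only quibble is your motivational claim that separator preservation ``cannot be handled by a realizer argument'': in fact it can be derived directly from conditions (i) and (iii) for $f$ together with the modus ponens lemma for meets of \cite[Lem. 3.6]{berg2023arrow}, but this does not affect your argument, which is the one the paper gives anyway.
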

\begin{proof}
Let us verify the three conditions in \autoref{def:implicative morphism of aas}.
        \begin{enumerate}[label=\roman*.]
        \item To show condition (i), recall that we can equivalently prove that $f^I(\top_I) \in S_B^I$, where $\top_I:I \to A$ is the constant function of value $\top \in A$. Note then that by condition (i) for $f$ we have: 
        \[ \evmeet_i f(\top_I(i)) = \evmeet_i f(\top) = f(\top) \in S_B\]
        meaning that $f^I(\top) \in S^I_B$.
        
        \item Let $r \in S_B$ be a realizer for $f$ and let $\rho : I \to B$ be the constant function at $r$; as $\evmeet_i \rho(i) = r \in S_B$, we know that $\rho \in S^I_B$. 

        Then, for all $\phi,\phi' \in A^I$:
        \[r \evleq f(\phi(i) \arr \phi'(i) ) \arr f\phi(i) \arr f\phi'(i) \quad \forall i\in I\]
        i.e., since order and implications are defined pointwise in $\B^I$:
        \[\rho \evleq f^I(\phi \arr \phi') \arr f^I\phi \arr f^I\phi' \]
        meaning that $\rho$ realizes $f^I$.
        
        \item Let $X \subseteq A^I \times A^I$ be such that $\evmeet_{(\phi,\psi) \in X} \phi \arr \psi \in S_A^I$. For the sake of notation, assume $X = \Set{ (\phi_j, \psi_j) | j \in J}$; then, since order (hence meets) and implications are defined pointwise in $\A^I$, we have:
        \[\evmeet_i \evmeet_j \phi_j(i) \arr \psi_j(i) \in S_A\]
        from which, by (iii) for $f$:
        \[\evmeet_i \evmeet_j f\phi_j(i) \arr f\psi_j(i) \in S_A\]
        meaning that $\evmeet_j f^I\phi_j \arr f^I\psi_j \in S_A^I$.
        \end{enumerate}
\end{proof}

Fix now an implicative morphism $f : \A \to \B$ and define the transformation:
\[\Phi_f^+ : P_{\A} \to P_{\B} \qquad (\Phi_f^+)_I (\phi) \coloneqq f^I\phi = f \circ \phi\]
Indeed, monotonicity of each component $(\Phi_f^+)_I : P_{\A}(I) \to P_{\B}(I)$ precisely corresponds to condition (iii) in \autoref{def:implicative morphism of aas}, while naturality is obvious.

Let us now show that, for every set $I$, $(\Phi_f^+)_I: P_{\A}(I) \to P_{\B}(I)$ preserves finite meets up to isomorphism. As $f^I(\top_I) \in S_B^I$ we know that $f^I(\top_I) \liso_I \top_I$, so we only have to show that for all $\phi,\psi \in A^I$:
\[ f^I(\phi \land \psi) \liso_I f^I\phi \land f^I\psi \]
where $f^I\phi \land f^I\psi$ is the meet of $f^I\phi$ and $f^I\psi$ in $P_{\B}(I)$ which, as we already recalled, can be assumed to be defined pointwise. 

Of course $f^I(\phi \land \psi) \vdash_I f^I\phi \land f^I \psi$ follows simply by monotonicity of $f^I$ with respect to the logical order; on the other hand, $f^I\phi \land f^I\psi \vdash_I f^I(\phi\land \psi)$ is ensured by the following lemma applied to $f^I : \A^I \to \B^I$.

\begin{lemma}\label{lem:implicative morphisms preserve logical meets}
    Let $f : \A \to \B$ be an implicative morphism. Then:
    \[\evmeet_{a,b \in A} (f(a) \land f(b)) \arr f(a\land b) \in S_B\]
\end{lemma}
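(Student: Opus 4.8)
The plan is to produce a single realizer in $S_B$ lying below $(f(a) \land f(b)) \arr f(a\land b)$ for all $a,b$ simultaneously, by combining the fact that $f$ transports pairing with the projection combinators for the meet taken in $\B$. Throughout I would use that in any arrow algebra the Heyting meet is the impredicative encoding $x \land y = \evmeet_{c} (x \arr y \arr c) \arr c$, so that the pairing map $x \arr y \arr (x \land y)$ and the two projections $(x\land y) \arr x$ and $(x\land y)\arr y$ admit uniform realizers in the separator: these are the values of the closed $\lambda$-terms $\lambda x y z.\, z x y$, $\lambda p.\, p(\lambda x y.\, x)$ and $\lambda p.\, p(\lambda x y.\, y)$, which belong to the separator by combinatory completeness.

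The key step is to show that $f$ preserves pairing, i.e.
\[\evmeet_{a,b \in A} f(a) \arr f(b) \arr f(a \land b) \in S_B.\]
First I would start from the pairing of $\A$, namely $\evmeet_{a,b} a \arr (b \arr (a\land b)) \in S_A$, and apply condition (iii) of \autoref{def:implicative morphism of aas} to the set $X \coloneqq \Set{(a,\, b \arr (a\land b)) | a,b \in A}$, obtaining $\evmeet_{a,b} f(a) \arr f(b \arr (a\land b)) \in S_B$. Condition (ii) then supplies $r \in S_B$ with $r \evleq f(b \arr (a\land b)) \arr f(b) \arr f(a\land b)$ for all $a,b$. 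Reading these two facts as entailments between the elements $(a,b)\mapsto f(a)$, $(a,b)\mapsto f(b\arr(a\land b))$ and $(a,b)\mapsto f(b)\arr f(a\land b)$ of the arrow algebra $\B^{A\times A}$, transitivity of $\vdash_{A\times A}$ composes them into the displayed claim.

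To finish, let $q \in S_B$ realize the key step and let $p_1, p_2 \in S_B$ be the projection realizers for the meet of $f(a)$ and $f(b)$ in $\B$, so that $p_1 e \evleq f(a)$ and $p_2 e \evleq f(b)$ whenever $e \evleq f(a) \land f(b)$. Then the term $m \coloneqq \lambda^* e.\, q\,(p_1 e)\,(p_2 e)$, which lies in $S_B$ by combinatory completeness, satisfies $m e = q\,(p_1 e)\,(p_2 e) \evleq f(a\land b)$ for every $e \evleq f(a)\land f(b)$ and all $a,b$; equivalently $m \evleq (f(a)\land f(b)) \arr f(a\land b)$ uniformly, which is exactly the assertion.

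I expect the main obstacle to be the key step, and in particular the bookkeeping around the nested implication $a \arr (b \arr (a\land b))$: condition (iii) preserves only one layer of implication uniformly, so one must peel the outer arrow with (iii), strip the inner $f(- \arr -)$ with (ii), and reassemble by transitivity, each time ensuring the realizers are genuinely uniform in $a,b$ — elements of $S_B$ bounding the relevant implications for all $a,b$ at once — rather than mere pointwise entailments.
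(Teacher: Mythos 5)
Your ``key step'' coincides exactly with the paper's own proof: from the pairing fact $\evmeet_{a,b\in A} a \arr b \arr (a\land b) \in S_A$, condition (iii) gives $\evmeet_{a,b} f(a)\arr f(b\arr(a\land b)) \in S_B$, condition (ii) gives $\evmeet_{a,b} f(b\arr(a\land b)) \arr f(b)\arr f(a\land b)\in S_B$, and transitivity of $\vdash_{A\times A}$ yields $\evmeet_{a,b} f(a)\arr f(b)\arr f(a\land b) \in S_B$. Up to there your argument is correct, and contrary to your expectation this is not where the difficulty lies; the genuine gap is in your final step. To form $m \coloneqq \lambda^* e.\, q\,(p_1 e)\,(p_2 e)$ and place it in $S_B$ ``by combinatory completeness'' you are implicitly working in an \emph{implicative} algebra, where application is well behaved and the separator is closed under it. In an arrow algebra the candidate application $p\,e \coloneqq \evmeet\Set{c | p\evleq e\arr c}$ admits no such closure property for the separator (that closure is a consequence of the stronger implicative-algebra axiom which arrow algebras precisely weaken), and combinatory completeness is only available in the weak form of \cite[Prop. 3.5]{berg2023arrow} (meets of instances of implicational tautologies lie in the separator) together with \cite[Lem. 3.6]{berg2023arrow} (modus ponens under meets); neither licenses evaluating $\lambda$-terms over separator elements and keeping the result in the separator. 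For the same reason, your opening claim that pairing is realized by ``the value of'' $\lambda xyz.\,zxy$ is not justified as stated: already $\evmeet_{x,y} x\arr y\arr(x\land y)\in S$ requires the combinator $\bf{a}$ to commute the meet over $c$ in the encoding $x\land y = \evmeet_c(x\arr y\arr c)\arr c$ past the two outer implications; $\bf{k}$ and $\bf{s}$ alone do not give it.

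The good news is that no hand-built realizer is needed at all. The preorder $(B^{A\times A},\vdash_{A\times A})$ is a Heyting prealgebra in which meets and implications are computed pointwise, so your key step, read as $f\pi_1 \vdash_{A\times A} f\pi_2 \arr f(\pi_1\land\pi_2)$, is converted by the Heyting adjunction (uncurrying) into $f\pi_1\land f\pi_2 \vdash_{A\times A} f(\pi_1\land\pi_2)$, which is literally the statement $\evmeet_{a,b}(f(a)\land f(b))\arr f(a\land b)\in S_B$. This one-line appeal to the prealgebra structure is what the paper's closing ``which means'' does, and it is the correct replacement for your $\lambda$-calculus construction.
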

\begin{proof}
First, note that:
        \[\evmeet_{a,b\in A} a \arr b \arr (a\land b) \in S_A\]
        from which, by (iii) in \autoref{def:implicative morphism of aas}:
        \[\evmeet_{a,b\in A} f(a) \arr f(b \arr (a\land b)) \in S_B\]
        Moreover, by (ii):
        \[\evmeet_{a,b\in A} f(b \arr (a\land b))\arr f(b) \arr f(a\land b) \in S_B\]
        so by intuitionistic reasoning we conclude:
        \[\evmeet_{a,b\in A} f(a) \arr f(b) \arr f(a\land b) \in S_B\]
        which means:
        \[\evmeet_{a,b\in A} (f(a) \land f(b)) \arr f(a\land b) \in S_B\]
\end{proof}

Summing up, we have shown the following.

\begin{proposition}\label{prop:morph induces cartesian transformation of triposes}
    Let $f : \A \to \B$ be an implicative morphism. Then:
    \[\Phi_f^+ : P_{\A} \to P_{\B} \qquad (\Phi_f^+)_I (\phi) \coloneqq  f \circ \phi\]
    is a cartesian transformation of triposes. 
\end{proposition}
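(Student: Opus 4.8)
The plan is to unpack the phrase ``cartesian transformation'' into its two constituent requirements and verify each in turn: first that $\Phi_f^+$ is a genuine pseudonatural transformation $P_{\A} \implies P_{\B}$ between the underlying $\Preord$-valued pseudofunctors, and then that each component preserves finite meets up to isomorphism. Almost all of the work will be to assemble pieces already in hand, so I would keep the exposition brief.

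For the transformation part, the key observation I would make at the outset is that the preceding lemma upgrades $f$ to an implicative morphism $f^I \coloneqq f \circ - : \A^I \to \B^I$ for every set $I$. Condition (iii) of \autoref{def:implicative morphism of aas} for $f^I$ (equivalently, condition (iii) for $f$ read off pointwise) states exactly that $\phi \vdash_I \psi$ forces $f^I\phi \vdash_I f^I\psi$; this is precisely the assertion that each component $(\Phi_f^+)_I : P_{\A}(I) \to P_{\B}(I)$ is order-preserving and lands in the right Heyting prealgebra. Pseudonaturality is then immediate, and in fact holds strictly: for any $g : J \to I$ the reindexing maps act by precomposition with $g$, and postcomposition with $f$ commutes with this on the nose, $f \circ (\phi \circ g) = (f \circ \phi) \circ g$.

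For the cartesian property I would handle $\top$ and binary meets separately, working in the fixed prealgebra $P_{\B}(I)$ where all meets are computed pointwise. Preservation of $\top$ is the cheap case: condition (i) for $f^I$ gives $f^I(\top_I) \in S_B^I$, hence $f^I(\top_I) \liso_I \top_I$. For binary meets, the inequality $f^I(\phi \land \psi) \vdash_I f^I\phi \land f^I\psi$ is free from monotonicity of $f^I$, since $\phi \land \psi$ sits logically below both $\phi$ and $\psi$. The reverse inequality $f^I\phi \land f^I\psi \vdash_I f^I(\phi \land \psi)$ is the sole substantive point, and I would secure it by invoking \autoref{lem:implicative morphisms preserve logical meets} applied to the implicative morphism $f^I$.

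I expect this last reverse inequality to be the main obstacle, in the precise sense that it is the only ingredient not reducible to pointwise bookkeeping in the power arrow algebras $\A^I, \B^I$. Its content is isolated in \autoref{lem:implicative morphisms preserve logical meets}, whose proof leans on intuitionistic reasoning together with conditions (ii) and (iii): one transports the tautology witnessing $a \arr b \arr (a \land b)$ across $f$ via (iii), then contracts using (ii) and the available combinator realizers. Everything outside that one lemma is routine assembly.
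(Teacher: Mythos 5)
Your proposal is correct and follows essentially the same route as the paper: the preceding lemma upgrading $f$ to $f^I : \A^I \to \B^I$, condition (iii) yielding monotonicity of each component with strict (hence obvious) naturality, condition (i) giving preservation of $\top$, logical monotonicity giving the easy inequality for binary meets, and \autoref{lem:implicative morphisms preserve logical meets} applied to $f^I$ giving the reverse inequality. Even your assessment of where the real content lies (the reverse meet inequality, proved by transporting the tautology via (iii) and contracting via (ii)) matches the paper's structure exactly.
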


As promised above, we can also prove the converse: up to isomorphism, every cartesian transformation of arrow triposes is induced in the sense of the previous proposition by an implicative morphism which is unique up to isomorphism.

\begin{proposition}\label{prop:lex trans is induced by morphism}
The association $f \mapsto \Phi_f^+$ determines a 2-fully faithful 2-functor $\ArrAlg \inc \Tripcart{\Sets}$.

  Explicitly, this means that for all arrow algebras $\A$ and $\B$ there is an equivalence of preorder categories:
  \[	{\Hom \ArrAlg \A \B} \equiv {\Hom {\Tripcart\Sets }{P_{\A}} {P_{\B}} }\]
\end{proposition}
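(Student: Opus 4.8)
The object part of the assignment is already furnished by \autoref{prop:morph induces cartesian transformation of triposes}, so the plan is to upgrade it to a 2-functor and then show that each induced map on hom-preorders is an equivalence. Two-functoriality is essentially free: since each $\Phi_f^+$ acts by postcomposition, one has $(\Phi_{gf}^+)_I(\phi) = (gf)\circ\phi = g\circ(f\circ\phi) = (\Phi_g^+)_I((\Phi_f^+)_I(\phi))$ and $\Phi_{\id_\A}^+ = \id_{P_\A}$ strictly, so the assignment is strictly functorial; moreover the naturality squares of $\Phi_f^+$ commute on the nose because postcomposition commutes with precomposition. What then remains is to check that $f\mapsto\Phi_f^+$ preserves and reflects the order on 2-cells, and is essentially surjective up to isomorphism.

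For the order, recall from \autoref{def:implicative morphism of aas} that $f\vdash f'$ is by definition equivalent to $f\phi\vdash_I f'\phi$ for every set $I$ and every $\phi : I\to A$, which is exactly the pointwise inequality $\Phi_f^+\leq\Phi_{f'}^+$; conversely, the latter specialised to $I = A$ and $\phi = \id_A$ yields $f\vdash_A f'$, i.e. $f\vdash f'$. Hence the map on hom-preorders both preserves and reflects the order, and in particular $\Phi_f^+\iso\Phi_{f'}^+$ forces $f\liso_A f'$, giving faithfulness.

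The crux is essential surjectivity. Given a cartesian transformation $\Phi : P_\A\to P_\B$, I will reconstruct an implicative morphism by evaluating at the ``generic predicate'': set $f\coloneqq\Phi_A(\id_A) : A\to B$, using that $P_\A(A) = A^A$. Pseudonaturality of $\Phi$ along an arbitrary function $\phi : I\to A$ -- viewed as the element $\phi = P_\A(\phi)(\id_A)$ of $P_\A(I)$ -- then gives $\Phi_I(\phi)\iso P_\B(\phi)(f) = f\circ\phi$ for every $I$ and every $\phi\in A^I$; this is precisely the statement that $\Phi\iso\Phi_f^+$, provided $f$ is an implicative morphism.

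To see that $f$ is indeed an implicative morphism I will verify the three conditions of \autoref{lem:alt def of implicative morphism}. Condition (3), that $f\phi\vdash_I f\psi$ whenever $\phi\vdash_I\psi$, follows at once from monotonicity of the component $\Phi_I$ together with the isomorphisms $\Phi_I(\phi)\iso f\circ\phi$ just established. Condition (1), that $\top\vdash f(\top)$, follows from cartesianness at the terminal set: $\Phi_1$ preserves the top element up to isomorphism, so that $f(\top)\iso\Phi_1(\top_1)$ lies in $S_B$ under $P_\B(1)\cong B$. The delicate point -- and the main obstacle -- is condition (2), $f(\pi_1\arr\pi_2)\vdash_{A\times A} f\pi_1\arr f\pi_2$, since this is the one place where the cartesian hypothesis enters essentially, beyond mere monotonicity. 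I will obtain it by transporting modus ponens across $\Phi$: in $P_\A(A\times A)$ one has $(\pi_1\arr\pi_2)\land\pi_1\vdash_{A\times A}\pi_2$, and applying the monotone, meet-preserving component $\Phi_{A\times A}$ and then the isomorphisms $\Phi_{A\times A}(-)\iso f\circ(-)$ yields $f(\pi_1\arr\pi_2)\land f\pi_1\vdash_{A\times A} f\pi_2$; currying in the Heyting prealgebra $B^{A\times A}$ then gives condition (2). With all three conditions verified, $f$ is an implicative morphism and $\Phi\iso\Phi_f^+$, so the map on hom-preorders is an equivalence and the 2-functor is 2-fully faithful.
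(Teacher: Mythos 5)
Your proof is correct and follows essentially the same route as the paper's: reconstruct $f \coloneqq \Phi_A(\id_A)$, check the order is preserved and reflected via the instance $I = A$, $\phi = \id_A$, and verify the implicative-morphism conditions using meet preservation at the terminal set, monotonicity, and the Heyting adjunction applied to modus ponens on $\pi_1, \pi_2$. The only differences are presentational — you spell out the pseudonaturality argument showing $\Phi \iso \Phi_f^+$ (which the paper merely recalls) and run the condition-(2) argument forwards by currying rather than backwards by uncurrying.
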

\begin{proof}
By the previous discussion, we have a functor $\ArrAlg \to \Tripcart\Sets$; 2-functoriality amounts to showing that, given any two implicative morphisms $f,f' : \A \to \B$ such that $f \vdash f'$, then $\Phi^+_f \leq \Phi^+_{f'}$. By definition, $f \vdash f'$ means that for every set $I$ and every $\phi : I \to A$ then $f\phi \vdash_I f'\phi$ holds, i.e. $(\Phi^+_f)_I(\phi) \vdash_I (\Phi^+_{f'})_I(\phi)$ holds in $P_{\B}(I)$, which means that $\Phi^+_f \leq \Phi^+_{f'}$. Note then that the converse also holds: if $\Phi_f^+ \leq \Phi_{f'}^+$, then in particular we have that $(\Phi^+_f)_A(\id_A) \vdash_A (\Phi^+_{f'})_A(\id_A)$, which means that $f\vdash f'$.

Let now $\Phi^+: P_{\A} \to P_{\B}$ be any cartesian transformation of arrow triposes. Recall that, up to isomorphism, $\Phi^+$ is given by postcomposition with the function:
    \[f \coloneqq (\Phi^+)_A(\id_A) : A \to B\]
        
        Let us now verify that $f$ satisfies the three conditions\footnote{If we assumed implicative morphisms to be monotone, we would not be able to prove that $f$ is one.} in \autoref{def:implicative morphism of aas}.
        \begin{enumerate}[label=\roman*.]
            \item To show condition (i), recall that we can equivalently prove that $f(\top) \in S_B$. By preservation of finite meets, we know that $(\Phi^+)_I(\top_I) \liso_I \top_I$, which for $I\coloneqq 1$ means that $f(\top) \dashv\vdash \top$, i.e. $f(\top)\in S_B$.
            
            \item Let $I \coloneqq A \times A$; recall that condition (ii) can be rewritten as:
    \[f (\pi_1\arr\pi_2) \vdash_{I} f \pi_1 \arr f \pi_2\]
    where $\pi_1,\pi_2 : I \to A$ are the two projections. In terms of $\Phi^+$, this means that we have to show:
    \[(\Phi^+)_I (\pi_1\arr\pi_2) \vdash_I (\Phi^+)_I (\pi_1) \arr (\Phi^+)_I (\pi_2)\]
    Through the Heyting adjunction in $P_{\B}(I)$, the previous is equivalent to:
    \[(\Phi^+)_I (\pi_1\arr\pi_2) \land (\Phi^+)_I (\pi_1) \vdash_I (\Phi^+)_I (\pi_2)\]
    i.e., by preservation of finite meets:
    \[(\Phi^+)_I (\pi_1\arr\pi_2 \land \pi_1) \vdash_{I} (\Phi^+)_I (\pi_2)\]
    which is ensured by monotonicity since $\pi_1 \arr \pi_2 \land \pi_1 \vdash_I \pi_2$.
    
\item Condition (iii) precisely corresponds to the monotonicity of each component $(\Phi^+)_I$.
\end{enumerate}

Therefore, the association $\Phi^+ \mapsto (\Phi^+)_A(\id_A)$ realizes the desired inverse equivalence since obviously $(\Phi^+_f)_A(\id_A) = f$ for every implicative morphism $f : \A \to \B$.
\end{proof}

\begin{remark}\label{rem:2-equivalence morphisms and lex trans}
In \cite{miquel2020-2} it is shown that every canonically-presented ($\Sets$-)tripos is equivalent to an implicative one, and hence to an arrow one. 

Therefore, assuming the Axiom of Choice, the 2-functor $\ArrAlg \to \Tripcart\Sets$ is actually a 2-equivalence of 2-categories: in fact, under the Axiom of Choice, every tripos is equivalent to a canonically presented one (cfr. \cite[Prop. 1.9]{HJP80}), meaning that the above 2-functor is essentially surjective on objects; moreover, under the Axiom of Choice, a 2-functor is a 2-equivalence if and only if it is 2-fully faithful and essentially surjective on objects, see e.g. \cite[Thm. 7.4.1]{johnson20202dimensional}.
\end{remark}

\subsection{Geometric morphisms of arrow triposes}Let us now move to geometric morphisms: as we will see in a moment, the existence of a right adjoint at the level of transformations of triposes can exactly be characterized by the existence of a right adjoint in $\ArrAlg$.

\begin{definition}\label{def:cd morphism of aas}
     An implicative morphism $f : \A \to \B$ is \emph{computationally dense\footnote{The name is obviously taken from the theory of PCAs, and it is also used in \cite{ferrer2019category} for applicative morphisms.}} if it admits a right adjoint in $\ArrAlg$, that is, if there exists an implicative morphism $h : \B \to \A$ such that $f h \vdash \id_B$ and $\id_A \vdash h f$.
\end{definition}

For every arrow algebra $\A$, the identity $\id_A : \A \to \A$ is computationally dense, as it is trivially right adjoint to itself. As the existence of a right adjoint in the preorder-enriched sense is clearly compositional, we can define the wide subcategory $\ArrAlgcd$ of $\ArrAlg$ on computationally dense morphisms.

Fix now a computationally dense implicative morphism $f : \A \to \B$ with right adjoint $h : \B \to \A$ and consider the cartesian transformation induced by $h$ as in \autoref{prop:morph induces cartesian transformation of triposes}:
\[\Phi_+ : P_{\B} \to P_{\A} \qquad (\Phi_+)_I (\phi) \coloneqq h\circ \phi\]

\begin{lemma}
    For every set $I$, $(\Phi_+)_I : P_{\B}(I) \to P_{\A}(I)$ is right adjoint to the map $(\Phi_f^+)_I : \psi \mapsto f\circ \psi$.
\end{lemma}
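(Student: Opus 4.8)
The plan is to verify directly the two inequalities that, by the characterization of adjunctions in a preorder-enriched category recalled in the introduction, amount to an adjunction $(\Phi_f^+)_I \dashv (\Phi_+)_I$ in $\Preord$. Write $L \coloneqq (\Phi_f^+)_I$ and $R \coloneqq (\Phi_+)_I$; I need to show $\id_{P_{\A}(I)} \leq R \circ L$ and $L \circ R \leq \id_{P_{\B}(I)}$ with respect to the order $\vdash_I$. Since both maps act by postcomposition, these composites unwind to $RL(\psi) = h \circ f \circ \psi = (hf)\circ\psi$ and $LR(\phi) = f \circ h \circ \phi = (fh)\circ\phi$, so the two required inequalities read $\psi \vdash_I (hf)\circ\psi$ for every $\psi \in A^I$ (the unit) and $(fh)\circ\phi \vdash_I \phi$ for every $\phi \in B^I$ (the counit).

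Both of these follow at once from the hypothesis that $f \dashv h$ in $\ArrAlg$, which by \autoref{def:cd morphism of aas} means precisely $\id_A \vdash hf$ and $fh \vdash \id_B$ as inequalities of implicative morphisms. The step that does the work is the reformulation of the order on implicative morphisms given after \autoref{def:implicative morphism of aas}: an inequality $g \vdash g'$ between parallel implicative morphisms is equivalent to $g\circ\phi \vdash_I g'\circ\phi$ holding for every set $I$ and every function $\phi$ into the common domain. Applying the forward direction of this equivalence to $\id_A \vdash hf$ gives exactly $\psi \vdash_I (hf)\circ\psi$ for every $\psi : I \to A$, i.e. the unit; applying it to $fh \vdash \id_B$ gives $(fh)\circ\phi \vdash_I \phi$ for every $\phi : I \to B$, i.e. the counit. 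Granting these for every $I$ yields $L \dashv R$.

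There is no real obstacle here; the only point deserving a word of care is the source of uniformity. The realizers witnessing $\id_A \vdash hf$ and $fh \vdash \id_B$ are elements of $S_A$ and $S_B$ that are independent of the particular argument in $A$ or $B$, and it is exactly this built-in uniformity of the order on implicative morphisms that lets a single realizer dominate the implication simultaneously in every coordinate $i \in I$ (so that, being below each $\psi(i) \arr hf(\psi(i))$, it lies below their meet, which then belongs to $S_A$ by upward-closure). This is why the pointwise adjunction holds uniformly across all of $I$ without choosing a separate realizer per coordinate, and it is the reason the reformulation cited above applies verbatim.
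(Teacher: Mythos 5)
Your proof is correct and takes essentially the same route as the paper's: both arguments reduce the pointwise adjunction $(\Phi_f^+)_I \dashv (\Phi_+)_I$ to the hypothesis $f \dashv h$ in $\ArrAlg$, via the fact (built into the definition of the order on implicative morphisms) that a single realizer witnesses the inequality uniformly across all coordinates $i \in I$. The only cosmetic difference is that you verify the unit and counit inequalities directly, while the paper checks the counit together with its universal property (invoking monotonicity of $h \circ -$, i.e.\ condition (iii) for $h$) — two equivalent formulations of the same adjunction, since both maps are components of cartesian transformations and hence monotone.
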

\begin{proof}
By the universal property of the counit of the adjunction, it suffices to show that for every $\phi : I \to B$:
        \begin{enumerate}
            \item $f h \phi \vdash_I \phi$ in $P_{\B}(I)$;
            \item for every $\psi : I \to A$ such that $f \psi \vdash_I \phi$ in $P_{\B}(I)$, then $\psi \vdash_I h \phi$ in $P_{\A}(I)$.
        \end{enumerate}

        (1) clearly follows since $h$ is right adjoint to $f$. To show (2), instead, suppose $f \psi \vdash_I \phi$; then, $h f \psi \vdash_I h \phi$ as $h$ is an implicative morphism, and hence $\psi \vdash_I h \psi$ since $\id_A \vdash_A h f$.
\end{proof}

The previous results then immediately yield the following.

\begin{theorem}\label{thm:induced geometric morphism}
    Let $f : \A \to \B$ be a computationally dense implicative morphism with right adjoint $h : \B \to \A$. Then, $\Phi^+_f : P_{\A} \to P_{\B}$ is a geometric transformation of triposes; in other words, the pair:
    \[\begin{tikzcd}
	{P_{\B}} & {P_{\A}}
	\arrow[""{name=0, anchor=center, inner sep=0}, "{\Phi^+}"', curve={height=12pt}, from=1-2, to=1-1]
	\arrow[""{name=1, anchor=center, inner sep=0}, "{\Phi_+}"', curve={height=12pt}, from=1-1, to=1-2]
	\arrow["\dashv"{anchor=center, rotate=-90}, draw=none, from=0, to=1]
\end{tikzcd} \qquad \begin{cases}
    (\Phi^+)_I (\psi) \coloneqq f \circ \psi \\
    (\Phi_+)_I (\phi) \coloneqq h \circ \psi
\end{cases}\]
is a geometric morphism $P_{\B}\to P_{\A}$.
\end{theorem}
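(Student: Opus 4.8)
The plan is to verify the two clauses in the definition of a geometric transformation (\autoref{geometric morphism of triposes}): that $\Phi^+_f$ is cartesian, and that it admits a right adjoint in $\Trip\Sets$. The first clause is already settled, since $f$ is an implicative morphism and hence $\Phi^+_f$ is a cartesian transformation by \autoref{prop:morph induces cartesian transformation of triposes}. So the only thing left to assemble is the existence of a right adjoint transformation.

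For the second clause, I would take as candidate right adjoint the transformation $\Phi_+ : P_{\B} \to P_{\A}$ induced by $h$, given pointwise by $(\Phi_+)_I(\phi) = h \circ \phi$. Since $h$ is itself an implicative morphism --- being a right adjoint of $f$ in $\ArrAlg$ --- \autoref{prop:morph induces cartesian transformation of triposes} guarantees that $\Phi_+$ is a genuine (in fact cartesian) transformation, hence in particular a $1$-cell of $\Trip\Sets$. It then remains to check that the components of $\Phi_+$ are pointwise right adjoint to those of $\Phi^+_f$, that is, $(\Phi^+_f)_I \dashv (\Phi_+)_I$ in $\Preord$ for every set $I$, which is precisely the definition of admitting a right adjoint in $\Trip\Sets$. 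But this is exactly the content of the lemma immediately preceding the theorem, whose proof translates the preorder-level adjunction $f \dashv h$ into a fibrewise adjunction of monotone maps, using that $h$ is monotone for the logical order.

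Assembling the two clauses, $\Phi^+_f$ is a geometric transformation, so by \autoref{geometric morphism of triposes} the pair $(\Phi^+_f, \Phi_+)$ constitutes a geometric morphism $P_{\B} \to P_{\A}$, with $\Phi^+_f$ as inverse image and $\Phi_+$ as direct image. I expect no genuine obstacle at the level of the theorem itself: the real work is discharged beforehand, in the preceding lemma that establishes the pointwise adjunction and in the cartesianness supplied by \autoref{prop:morph induces cartesian transformation of triposes}. Given these, the statement is a formal bookkeeping step matching the author's remark that the previous results immediately yield it.
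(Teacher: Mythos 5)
Your proposal is correct and follows exactly the paper's own route: cartesianness of $\Phi^+_f$ comes from \autoref{prop:morph induces cartesian transformation of triposes}, the right adjoint is the transformation induced by $h$ (itself an implicative morphism by the definition of computational density), and the pointwise adjunction $(\Phi^+_f)_I \dashv (\Phi_+)_I$ is the preceding lemma. The paper likewise treats the theorem as an immediate assembly of these two prior results, so there is nothing to add.
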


As we did for implicative morphisms and cartesian transformations, in this case too we can prove the converse: up to isomorphism, every geometric morphism of arrow triposes is induced by an essentially unique computationally dense implicative morphism.

\begin{proposition}\label{prop:geo is induced by cd morphism}
    The 2-functor of \autoref{prop:lex trans is induced by morphism} restricts to a 2-fully faithful 2-functor $\ArrAlgcd \inc \Tripgeom\Sets$.

    Explicitly, this means that for all arrow algebras $\A$ and $\B$ there is an equivalence of preorder categories:
    \[ {\Hom\ArrAlgcd\A\B} \equiv {\Hom{\Tripgeom\Sets}{P_{\A}}{P_{\B}}}\]
\end{proposition}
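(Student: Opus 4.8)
The plan is to do almost no new computation and instead leverage the 2-full-faithfulness already established in \autoref{prop:lex trans is induced by morphism}. Concretely, the whole statement reduces to the following biconditional: under the equivalence of hom-preorders $\Hom\ArrAlg\A\B \iso \Hom{\Tripcart\Sets}{P_{\A}}{P_{\B}}$ given by $f \mapsto \Phi^+_f$, an implicative morphism $f$ is computationally dense if and only if the cartesian transformation $\Phi^+_f$ is geometric. Granting this, the restricted assignment $\ArrAlgcd \inc \Tripgeom\Sets$ is well-defined (both source and target are wide subcategories, identities and composites of computationally dense morphisms resp.\ geometric transformations stay in the subclass, as right adjoints compose) and 2-fully faithful: the subclasses of computationally dense morphisms and of geometric transformations are each closed under isomorphism, the order on the hom-preorders of $\ArrAlgcd$ and $\Tripgeom\Sets$ is inherited from $\ArrAlg$ and $\Tripcart\Sets$, and an equivalence of preorders restricts to an equivalence between isomorphism-closed full sub-preorders that match up under the biconditional.

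The forward implication is precisely \autoref{thm:induced geometric morphism}: if $f$ is computationally dense with right adjoint $h$ in $\ArrAlg$, then $\Phi^+_f$ is geometric, with direct image the cartesian transformation $\Phi_h$ induced by $h$.

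For the converse, suppose $\Phi^+_f$ is geometric, so it admits a right adjoint $\Phi_+ : P_{\B} \to P_{\A}$ in $\Trip\Sets$, meaning $(\Phi^+_f)_I \dashv (\Phi_+)_I$ in $\Preord$ for every set $I$. The key observation is that $\Phi_+$ is automatically cartesian: a right adjoint between preorders preserves all existing meets, so each $(\Phi_+)_I$ sends $\top$ to $\top$ and binary meets to binary meets up to isomorphism, which is exactly cartesianness. Hence $\Phi_+$ already lies in $\Tripcart\Sets$, and by \autoref{prop:lex trans is induced by morphism} it is induced, up to isomorphism, by the implicative morphism $h \coloneqq (\Phi_+)_B(\id_B) : \B \to \A$. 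Since the order on $\Tripcart\Sets$ is inherited from $\Trip\Sets$ and the composites and identities involved are cartesian, the adjunction $\Phi^+_f \dashv \Phi_+$ is witnessed by inequalities that already hold in $\Tripcart\Sets$; transporting them through the hom-preorder equivalence of \autoref{prop:lex trans is induced by morphism}, which preserves and reflects both composition and order, yields $f h \vdash \id_B$ and $\id_A \vdash h f$ in $\ArrAlg$. Thus $f \dashv h$ and $f$ is computationally dense.

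The main obstacle is exactly this converse step, and within it the single nontrivial point that the direct image $\Phi_+$ --- which a priori is merely a transformation in $\Trip\Sets$ and need not respect any Heyting structure --- is forced to be cartesian purely by virtue of being a pointwise right adjoint. This is what lets us pull $\Phi_+$ back into the essential image of the 2-functor and thereby manufacture the required right adjoint $h$ in $\ArrAlg$; everything else is bookkeeping with the equivalence of hom-preorders from \autoref{prop:lex trans is induced by morphism}.
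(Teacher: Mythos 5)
Your proposal is correct and follows essentially the same route as the paper: the forward direction is \autoref{thm:induced geometric morphism}, and the converse extracts $h \coloneqq (\Phi_+)_B(\id_B)$ via \autoref{prop:lex trans is induced by morphism} after observing that the direct image is necessarily cartesian, then transports the adjunction inequalities back to $fh \vdash \id_B$ and $\id_A \vdash hf$. The only difference is that you spell out \emph{why} $\Phi_+$ is cartesian (pointwise right adjoints in $\Preord$ preserve existing meets), a point the paper asserts without justification, so your write-up is if anything slightly more complete.
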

\begin{proof}
By the previous discussion, we have a 2-functor ${\ArrAlgcd \to \Tripgeom\Sets}$ such that, given any two computationally dense implicative morphisms $f,f' : \A \to \B$, $f \vdash f'$ if and only if $\Phi_f^+ \leq \Phi_{f'}^+$.
    
        Let now $\Phi^+ : P_{\A} \to P_{\B}$ be a geometric transformation of triposes, that is, a cartesian transformation having a right adjoint $\Phi_+ : P_{\B} \to P_{\A}$. Recall that, up to isomorphism, $\Phi^+$ is given by postcomposition with $f \coloneqq (\Phi^+)_A(\id_A) : A \to B$,
        which is an implicative morphism $\A \to \B$ by \autoref{prop:lex trans is induced by morphism}. In the same way, as it is also necessarily cartesian, $\Phi_+$ is given up to isomorphism by postcomposition with the implicative morphism $h \coloneqq (\Phi_+)_B(\id_B) : \B \to \A$. Moreover, the adjunction between $\Phi^+$ and $\Phi_+$ directly yields $fh \vdash \id_B$ and $\id_A \vdash hf$, meaning that $h$ is right adjoint to $f$ making it computationally dense.
\end{proof}

\begin{remark}
    As in \autoref{rem:2-equivalence morphisms and lex trans}, assuming the Axiom of Choice, the above 2-functor $\ArrAlgcd \to \Tripgeom\Sets$ is a 2-equivalence of 2-categories.
\end{remark}

\subsection{Equivalences of arrow triposes}Finally, let us characterize equivalences of arrow triposes on the level of arrow algebras.

With usual 2-categorical notation, we say that an implicative morphism $f : \A \to \B$ is an \emph{equivalence} if there exists another implicative morphism $g : \B \to \A$ such that $f g \liso \id_B$ in $\ArrAlg(\B,\B)$ and $g f \liso \id_A$ in $\ArrAlg(\A,\A)$, in which case $g$ is a \emph{quasi-inverse} of $f$. Two arrow algebras are then \emph{equivalent} if there exists an equivalence between them; clearly, equivalent arrow algebras induce equivalent triposes.

\begin{lemma}\label{lem:equivalences}
    Let $f : \A \to \B$ be an equivalence of arrow algebras. 
    
    Then, $f$ is computationally dense, and the induced geometric morphism of triposes $\Phi : P_{\B}\to P_{\A}$ is an equivalence.
\end{lemma}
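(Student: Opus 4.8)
The plan is to take the quasi-inverse $g : \B \to \A$ of $f$ as the witness for both claims: it will serve as the right adjoint making $f$ computationally dense, and it will induce the quasi-inverse transformation.

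First I would unpack what it means for $f$ to be an equivalence: there is an implicative morphism $g : \B \to \A$ with $fg \liso \id_B$ in $\ArrAlg(\B,\B)$ and $gf \liso \id_A$ in $\ArrAlg(\A,\A)$. Since $\liso$ is the conjunction of two inequalities, from $fg \liso \id_B$ I read off $fg \vdash \id_B$ and from $gf \liso \id_A$ I read off $\id_A \vdash gf$. These are precisely the two conditions making $g$ a right adjoint of $f$ in the preorder-enriched category $\ArrAlg$, so by \autoref{def:cd morphism of aas} the morphism $f$ is computationally dense, with right adjoint $g$. (In a preorder-enriched setting there are no coherence conditions to check, so any equivalence is automatically an adjoint equivalence.) By \autoref{thm:induced geometric morphism} the pair $(\Phi^+_f, \Phi^+_g)$ is then a geometric morphism $\Phi : P_{\B} \to P_{\A}$, with inverse image $\Phi^+ = \Phi^+_f$ given by postcomposition with $f$ and direct image $\Phi_+ = \Phi^+_g$ given by postcomposition with $g$.

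It then remains to show that $\Phi$ is an equivalence, for which it suffices to exhibit $\Phi^+ = \Phi^+_f$ as an equivalence of transformations with quasi-inverse $\Phi_+ = \Phi^+_g$. Here I would invoke the 2-functoriality of the assignment $f \mapsto \Phi^+_f$ from \autoref{prop:lex trans is induced by morphism}. This assignment is covariantly functorial: reading off the pointwise description $(\Phi^+_{gf})_I(\phi) = (gf)\circ\phi = g\circ(f\circ\phi)$ gives $\Phi^+_{gf} = \Phi^+_g \circ \Phi^+_f$, and symmetrically $\Phi^+_{fg} = \Phi^+_f \circ \Phi^+_g$. Being a 2-functor, it preserves isomorphisms of parallel morphisms, so the two witnessing isomorphisms are transported to
\[\Phi^+_f \circ \Phi^+_g = \Phi^+_{fg} \iso \Phi^+_{\id_B} = \id_{P_{\B}}, \qquad \Phi^+_g \circ \Phi^+_f = \Phi^+_{gf} \iso \Phi^+_{\id_A} = \id_{P_{\A}}.\]
Hence $\Phi^+_f$ and $\Phi^+_g$ are mutually quasi-inverse transformations, so $\Phi^+$ is an equivalence and therefore $\Phi : P_{\B} \to P_{\A}$ is an equivalence of triposes.

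I do not expect a genuine obstacle; the proof is essentially an application of the fact that 2-functors preserve equivalences. The only points needing care are bookkeeping: confirming that the two directions of each $\liso$ yield exactly the adjunction inequalities with the correct variance, and checking that $f \mapsto \Phi^+_f$ composes covariantly so that the two composites $\Phi^+_{fg}$ and $\Phi^+_{gf}$ land on the identities of the correct triposes $P_{\B}$ and $P_{\A}$, respectively. Both follow immediately from the pointwise postcomposition description of $\Phi^+_f$.
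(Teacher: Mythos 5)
Your proof is correct and takes essentially the same route as the paper: take the quasi-inverse $g$ as the right adjoint witnessing computational density, then observe that the induced geometric morphism has inverse and direct images given by postcomposition with $f$ and $g$, whose composites are isomorphic to identities because $fg \liso \id_B$ and $gf \liso \id_A$. The only difference is presentational — you spell out the covariant 2-functoriality bookkeeping from \autoref{prop:lex trans is induced by morphism} that the paper's proof leaves implicit.
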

\begin{proof}
Let $g : \B \to \A$ be a quasi-inverse of $f$. As $g$ is in particular right adjoint to $f$ in $\ArrAlg$, $f$ is computationally dense, and the induced geometric morphism $\Phi = (\Phi^+, \Phi_+) :P_{\B}\to P_{\A}$ is given by:
        \[(\Phi^+)_I (\psi) = f \circ \psi \qquad (\Phi_+)_I(\phi) = g \circ \phi\]
        In particular, $\Phi^+\Phi_+$ and $\Phi_+\Phi^+$ are isomorphic to identities since $f g \liso \id_B$ and $g f \liso \id_A$, meaning that $\Phi$ is an equivalence.
\end{proof}

By the previous results, we can also easily address the converse.

\begin{proposition}\label{prop:equivalent triposes are induced by equivalences}
    Let $\Phi : P_{\A} \to P_{\B}$ be an equivalence of arrow triposes. Then, $\Phi$ is induced up to isomorphism by an (essentially unique) equivalence of arrow algebras $f : \A \to \B$.
\end{proposition}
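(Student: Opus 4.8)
The plan is to reduce the whole statement to the 2-full faithfulness already established in \autoref{prop:geo is induced by cd morphism}, together with the observation that an equivalence of triposes is in particular a geometric transformation. First I would fix a quasi-inverse $\Psi : P_{\B} \to P_{\A}$ of $\Phi$, so that $\Phi \circ \Psi \iso \id_{P_{\B}}$ and $\Psi \circ \Phi \iso \id_{P_{\A}}$. These two isomorphisms make $\Psi$ simultaneously a left and a right adjoint to $\Phi$, so both $\Phi$ and $\Psi$ preserve finite meets (being right adjoints) and hence are cartesian; as each admits a right adjoint, both are geometric transformations. This is precisely the content of the remark recorded after \autoref{geometric morphism of triposes}, which states that $(\Phi,\Psi)$ and $(\Psi,\Phi)$ are geometric morphisms in both directions.

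Having placed $\Phi$ and $\Psi$ inside $\Tripgeom\Sets$, I would invoke \autoref{prop:geo is induced by cd morphism} to obtain computationally dense implicative morphisms $f \coloneqq \Phi_A(\id_A) : \A \to \B$ and $g \coloneqq \Psi_B(\id_B) : \B \to \A$ with $\Phi \iso \Phi^+_f$ and $\Psi \iso \Phi^+_g$. The candidate equivalence of arrow algebras is then $f$, with proposed quasi-inverse $g$. The key step is purely formal: by 2-functoriality of $f \mapsto \Phi^+_f$, the composite transformations are computed pointwise by composing the underlying functions, so that $\Phi^+_f \circ \Phi^+_g = \Phi^+_{fg}$ and $\Phi^+_g \circ \Phi^+_f = \Phi^+_{gf}$. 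The equivalence hypotheses therefore read $\Phi^+_{fg} \iso \id_{P_{\B}} = \Phi^+_{\id_B}$ and $\Phi^+_{gf} \iso \id_{P_{\A}} = \Phi^+_{\id_A}$. Since the 2-functor is 2-fully faithful — concretely, $\Phi^+_h \leq \Phi^+_{h'}$ if and only if $h \vdash h'$, as shown in the proof of \autoref{prop:lex trans is induced by morphism} — these isomorphisms of transformations reflect to $fg \liso \id_B$ in $\Hom\ArrAlg\B\B$ and $gf \liso \id_A$ in $\Hom\ArrAlg\A\A$. Thus $f$ is an equivalence of arrow algebras with quasi-inverse $g$, and $\Phi \iso \Phi^+_f$ exhibits $\Phi$ as induced by $f$.

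Essential uniqueness of $f$ is again immediate from 2-full faithfulness: any implicative morphism $f'$ with $\Phi^+_{f'} \iso \Phi$ satisfies $\Phi^+_{f'} \iso \Phi^+_f$, hence $f' \liso f$. I do not anticipate a genuine obstacle here, as the argument is essentially a transport of the equivalence structure across an established 2-fully faithful 2-functor. The only points requiring care are bookkeeping of variance — verifying that the quasi-inverse $\Psi$ is indeed a right adjoint to $\Phi$ so that \autoref{prop:geo is induced by cd morphism} applies to it as well — and matching the direction of composition, so that the relevant composite is $\Phi^+_{fg}$ and not $\Phi^+_{gf}$ when unwinding $\Phi \circ \Psi$.
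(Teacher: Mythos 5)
Your proposal is correct and follows essentially the same route as the paper: both recognize that the quasi-inverse $\Psi$ makes $(\Phi,\Psi)$ a geometric morphism, invoke \autoref{prop:geo is induced by cd morphism} to obtain the computationally dense implicative morphism $f$ with right adjoint $g$, and transport the isomorphisms $\Phi\Psi \iso \id$ and $\Psi\Phi \iso \id$ back to $fg \liso \id_B$ and $gf \liso \id_A$ via the (order-reflecting) 2-full faithfulness of the correspondence. Your write-up merely spells out in more detail the reflection step that the paper leaves implicit.
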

\begin{proof}
Let $\Psi : P_{\B} \to P_{\A}$ be a quasi-inverse of $\Phi$. Then, $\Phi$ is both left and right adjoint to $\Psi$, which means in particular that the pair $(\Phi, \Psi)$ defines a geometric morphism $P_{\B} \to P_{\A}$. Therefore, by \autoref{prop:geo is induced by cd morphism}, $\Phi$ is induced up to isomorphism by an (essentially unique) computationally dense implicative morphism $f : \A \to \B$; a right adjoint $g : \B\to\A$ inducing $\Psi$ up to isomorphism then satisfies $f g \liso \id_B$ and $g f \liso \id_A$, making $f$ an equivalence.
\end{proof}

\begin{remark}\label{rem:strong arrow algebras}
    Recall from \cite{berg2023arrow} that  an arrow algebra $\A = (A, \evleq, \arr, S)$ is \emph{strong} if 
    \[ \evmeet_{a \in A, B \subseteq A} \left( \evmeet_{b\in B} a \arr b \right) \arr a \arr \evmeet_{b\in B} b  \in S \]

    \cite[Sec. 5]{berg2023arrow} then shows how every arrow algebra $\A = (A, \evleq, \arr, S)$ is equivalent to a strong one. In fact, recall by \cite[Def. 5.4]{berg2023arrow} that an element $a\in A$ is \emph{functional} if 
\[a = \evmeet_{b,c\in A} \Set{ b\arr c | a\evleq b\arr c}\]
and, letting $A_{\text{fun}}$ be the set of functional elements of $\A$ and $S_{\text{fun}} \coloneqq S \cap A_{\text{fun}}$,  \cite[Cor. 5.6]{berg2023arrow} shows how $\A_{\text{fun}} \coloneqq ( A_{\text{fun}}, \evleq, \arr, S_{\text{fun}} )$ is a strong arrow algebra. Then, since implications are functional, $\partial \liso_A \id_A$ implies that $\partial : A \to A$ defines an equivalence $\A \to \A_{\text{fun}}$, with right adjoint given by the inclusion $A_{\text{fun}} \inc A$.
\end{remark}

\begin{remark}\label{rem:trivial aa}
We say that an arrow algebra $\A = (A,\evleq,\arr,S)$ is \emph{trivial} if $S = A$, or equivalently if $\bot \in S$. It is then immediate to show that $\A$ is trivial if and only if the unique map $\A \to \{*\}$ -- which is obviously an implicative morphism -- is an equivalence. Hence, $\A$ is trivial if and only if $\AT \A$ is (equivalent to) the trivial topos.
\end{remark}

As an example of application of the above correspondence between equivalences, we can easily characterize localic triposes among arrow triposes:\footnote{We thank the referee for suggesting such a characterization.} in particular, thanks to \cite{HJP80} we also have an explicit description -- up to equivalence in $\ArrAlg$ -- of the frame inducing the tripos.

\begin{proposition}\label{prop:localic arrtrip}
    Let $\A = (A,\evleq,\arr, S)$ be an arrow algebra. Then, the following are equivalent:
    \begin{enumerate}
    \item for every set $I$, the logical order $\vdash_I$ on $\A^I$ coincides with the pointwise version of $\vdash$;
    \item the Heyting algebra $(A/{\liso}, \vdash, \arr)$ is complete, hence a frame which we see as an arrow algebra $\bar {\A}$ in the canonical way, and the projection map $\A \to \bar\A$ is an equivalence in $\ArrAlg$;
    \item $\A$ is equivalent to a frame, seen as an arrow algebra in the canonical way;
    \item $\A$ is equivalent to an arrow algebra having $\{\top\}$ as separator;
    \item $P_{\A}$ is localic.
    \end{enumerate}
\end{proposition}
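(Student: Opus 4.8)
The plan is to prove the five conditions equivalent by establishing the cycle $(1)\Rightarrow(2)\Rightarrow(3)\Rightarrow(5)\Rightarrow(1)$, and then tying $(4)$ in via $(3)\Rightarrow(4)$ and $(4)\Rightarrow(3)$. Several links are immediate. The implication $(2)\Rightarrow(3)$ is trivial, and $(3)\Rightarrow(4)$ holds because a frame is, by construction, an arrow algebra with separator $\{\top\}$. For $(3)\Rightarrow(5)$ I would invoke \autoref{lem:equivalences}: if $\A$ is equivalent to a frame $\O(X)$, then $P_\A\simeq P_{\O(X)}$, and the latter is the localic tripos, so $P_\A$ is localic since localicity is invariant under equivalence of triposes.

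The heart of the matter is $(1)\Rightarrow(2)$, and the key realization is that one should \emph{not} attempt to compute arbitrary logical meets in $\bar\A$ as evidential meets $\evmeet$ — that would require strongness — but should instead extract them from the tripos structure. Concretely, for a set $I$ write $!:I\to 1$; the tripos axiom furnishes a right adjoint $\forall_{!}$ to $!^{*}=P_\A(!):(A,\vdash)\to(A^I,\vdash_I)$, where $!^{*}$ sends $a$ to the constant family $\Delta a$. The adjunction reads $\Delta a\vdash_I\psi\iff a\vdash\forall_{!}\psi$; rewriting the left side by condition $(1)$ as the pointwise statement ``$a\vdash\psi(i)$ for all $i$'', this says precisely that $[\forall_{!}\psi]$ is the greatest lower bound of $\{[\psi(i)]\}_{i}$ in $\bar\A$. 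As $I$ and $\psi$ are arbitrary, $\bar\A$ has all meets, hence is a complete Heyting algebra, i.e. a frame.

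With $\bar\A$ now a frame, I would check that the projection $q:\A\to\bar\A$, $a\mapsto[a]$, is a monotone implicative morphism: monotonicity for the evidential orders follows from $a\evleq a'\Rightarrow a\vdash a'$ (immediate from reflexivity of $\vdash$, monotonicity of $\arr$ in its second argument, and upward closure of $S$), so only conditions (i) and (ii) of \autoref{def:implicative morphism of aas} need checking and (iii) is free by \autoref{rem:monotone morphism}. Finally I would show $\Phi_q^{+}:P_\A\to P_{\bar\A}$ is an equivalence of triposes: since $\vdash_I$ coincides with the pointwise order by $(1)$, each component $(\Phi_q^{+})_I:(A^I,\vdash_I)\to\bar\A^{I}$ induces, on poset reflections, the identity of $\bar\A^{I}$, so it is a levelwise equivalence of preorders whose inverses assemble (by naturality of the inverse of a natural iso) into a quasi-inverse transformation. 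By \autoref{prop:equivalent triposes are induced by equivalences} this equivalence is induced by an equivalence of arrow algebras, and by the 2-faithfulness of \autoref{prop:lex trans is induced by morphism} that equivalence must be $q$ itself up to isomorphism; hence $q$ is an equivalence, which is exactly $(2)$.

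It remains to close the cycle with $(5)\Rightarrow(1)$ and to route $(4)$ back in. For $(5)\Rightarrow(1)$, assume $P_\A\simeq P_H$ for a complete Heyting algebra $H$ via a transformation $\Phi$; using that $\Phi$ is pseudonatural, so commutes up to isomorphism with restriction along each point $i:1\to I$, and that $P_H(I)=H^{I}$ carries the pointwise order, I would chain $\phi\vdash_I\psi\iff\Phi_I\phi\leq\Phi_I\psi\iff i^{*}\Phi_I\phi\leq i^{*}\Phi_I\psi\ \text{for all }i\iff\phi(i)\vdash\psi(i)\ \text{for all }i$, which is condition $(1)$. For $(4)\Rightarrow(3)$, observe that an arrow algebra $\B$ with separator $\{\top\}$ satisfies $(1)$ directly, since $\evmeet_i x_i=\top$ if and only if every $x_i=\top$; applying the already-proven chain $(1)\Rightarrow(2)\Rightarrow(3)$ to $\B$ shows $\B$ is equivalent to a frame, so $\A\simeq\B$ yields $(3)$ by transitivity of equivalence. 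The main obstacle, deserving the most care, is the completeness argument in $(1)\Rightarrow(2)$: the evidential meet $\evmeet$ does not compute logical meets without strongness, and the whole content of the implication is that condition $(1)$ lets the tripos quantifier $\forall_{!}$ supply those meets; a secondary point to verify is that a levelwise equivalence of preorders between triposes genuinely assembles into a pseudonatural equivalence.
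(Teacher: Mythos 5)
Your proof is correct, but it takes a genuinely more self-contained route than the paper's. The paper disposes of the hard equivalence $(1)\Leftrightarrow(2)$ by citing \cite[Thm.~4.1]{HJP80} together with \autoref{prop:equivalent triposes are induced by equivalences}, proves $(4)\Rightarrow(1)$ by transporting the pointwise-order condition directly along the equivalence of $(4)$, and obtains $(3)\Leftrightarrow(5)$ from \autoref{lem:equivalences} and \autoref{prop:equivalent triposes are induced by equivalences}. You instead reconstruct the content of the citation: your observation that condition $(1)$ turns the tripos quantifier $\forall_!$ into arbitrary infima in the poset reflection $\bar\A$ is precisely the mechanism hidden behind \cite[Thm.~4.1]{HJP80}, and it correctly isolates why strongness (computing logical meets as evidential meets $\evmeet$) is never needed. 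Likewise, your $(5)\Rightarrow(1)$ via pseudonaturality along points $i \colon 1 \to I$ is a clean replacement for the paper's use of \autoref{prop:equivalent triposes are induced by equivalences}, and your detour $(4)\Rightarrow(1)\Rightarrow(2)\Rightarrow(3)$ is a harmless rearrangement of the paper's direct computation for $(4)\Rightarrow(1)$; both rest on the same fact that $\evmeet_{i}x_i=\top$ iff every $x_i=\top$. What the paper's route buys is brevity and reuse of its 2-functorial machinery; what yours buys is independence from \cite{HJP80} and an explicit account of where completeness of $\bar\A$ comes from. The one point you flag but should nail down is the construction of the quasi-inverse of $q\colon\A\to\bar\A$ (equivalently, the essential surjectivity of the components $(\Phi^+_q)_I$): since $\bar\A$ is a poset and 2-cells into $\bar\A$ are the pointwise order, any $g$ with $qg\liso\id_{\bar\A}$ is a genuine section of $q$, so a choice of representatives of $\liso$-classes is genuinely required. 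Either invoke the Axiom of Choice, which the paper tolerates elsewhere (cf.\ \autoref{rem:2-equivalence morphisms and lex trans}), or -- better -- use your own device once more: setting $g(x)\coloneqq\forall_!(\iota_x)$, where $\iota_x$ is the inclusion into $A$ of the nonempty class $q^{-1}(x)$, gives $[g(x)]=x$ by the infimum property you established, and postcomposition with $g$ is then strictly natural, monotone by $(1)$, and quasi-inverse to $\Phi^+_q$; this settles choice-freely your ``secondary point'' about assembling the levelwise inverses into a pseudonatural equivalence.
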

\begin{proof}
$(1) \Leftrightarrow (2)$ coincides with (part of) \cite[Thm. 4.1]{HJP80} making use of \autoref{prop:equivalent triposes are induced by equivalences}.

$(2)\implies (3)$ is obvious.

$(3) \implies (4)$ is obvious.

To show $(4)\implies (1)$, suppose there exists an equivalence $f : \A \to \B$ for some arrow algebra $\B = (B, \evleq, \arr, \{\top\})$. Then, for every set $I$ and all $\phi,\psi : I \to A$:
\begin{align*}
    \phi \vdash_I \psi & \iff f \phi \vdash_I f \psi \\
        & \iff \top = \evmeet_{i\in I} f\phi(i) \arr f \psi (i)  \\
    & \iff \forall i \in I, \ \top =  f\phi(i) \arr f \psi (i) \\
    & \iff \forall i \in I, \   f\phi(i) \vdash f \psi (i) \\
    & \iff \forall i \in I, \ \phi(i) \vdash  \psi (i) 
\end{align*}

Finally, $(3) \Leftrightarrow (5)$ follows from \autoref{lem:equivalences} and \autoref{prop:equivalent triposes are induced by equivalences}.
\end{proof}

\section{Examples of implicative morphisms II}\label{sec:examples of morphisms}
We can now finally conclude our analysis of the two main classes of arrow algebras, namely those arising from frames and from PCAs, now studying their morphisms in relation to the transformations between the associated triposes.

\subsection{Frames}
First, recall that frame homomorphisms are implicative morphisms, seeing frames as arrow algebras in the canonical way. More generally, as noted in \autoref{rem:lex map of frames}, every function between frames which preserves finite meets is an implicative morphisms: we can now easily prove the converse as well. In fact, if $f : \O(Y) \to \O(X)$ is an implicative morphism, then:
\begin{itemize}
    \item[--] as the separator on a frame is canonically defined as $\{\top\}$, $f(\top) = \top$;
    \item[--] by \autoref{lem:implicative morphisms preserve logical meets}, $f$ preserves binary logical meets, but the logical order on frames coincides with the evidential order and hence $f$ preserves binary (``evidential'') meets.
\end{itemize}

Moving on, let us see how every frame homomorphism is computationally dense as an implicative morphism.

\begin{proposition}
    Let $f^* : \O(Y)\to \O(X)$ be a frame homomorphism. Then, $f^*$ is a computationally dense implicative morphism.
\end{proposition}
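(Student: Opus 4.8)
The plan is to produce an explicit right adjoint to $f^*$ in $\ArrAlg$, namely the right adjoint $f_*$ of $f^*$ regarded as a map of complete lattices. Since a frame homomorphism preserves arbitrary joins, $f^* : \O(Y) \to \O(X)$ is a join-preserving monotone map between complete lattices and therefore has a right adjoint $f_* : \O(X) \to \O(Y)$, given by $f_*(v) = \biglor\Set{ u \in \O(Y) | f^*(u) \leq v}$ and characterized by $f^*(u) \leq v \iff u \leq f_*(v)$; this is simply the direct-image part of the underlying morphism of locales. The first step is to check that $f_*$ is an implicative morphism $\O(X) \to \O(Y)$: being a right adjoint, $f_*$ preserves arbitrary meets, in particular finite meets, so it is an implicative morphism by \autoref{rem:lex map of frames}.

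It then remains to verify that $f_*$ serves as a right adjoint to $f^*$ in the preorder-enriched sense of \autoref{def:cd morphism of aas}, i.e. that $f^* f_* \vdash \id_{\O(X)}$ and $\id_{\O(Y)} \vdash f_* f^*$. Here I would first record that, for frames equipped with the separator $\{\top\}$, the order $\vdash$ between implicative morphisms collapses to the pointwise evidential order: a witness $g \vdash g'$ must be a realizer in $\{\top\}$, so the condition $\top \leq g(a) \arr g'(a)$ reduces to $g(a) \leq g'(a)$ for every $a$. Under this identification the two required inequalities become exactly the counit $f^* f_* \leq \id_{\O(X)}$ and the unit $\id_{\O(Y)} \leq f_* f^*$ of the lattice-theoretic adjunction $f^* \dashv f_*$, both of which hold by construction. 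Hence $f_*$ is a right adjoint to $f^*$ in $\ArrAlg$, and $f^*$ is computationally dense.

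The argument is essentially immediate once the right adjoint is identified, so I do not expect a serious obstacle. The only points requiring genuine care are bookkeeping ones: confirming that the lattice adjunction $f^* \dashv f_*$ translates into $f_*$ being the right adjoint of $f^*$ in $\ArrAlg$ (and not the reverse), and justifying that the enriched order $\vdash$ reduces to the pointwise order precisely because the separator on a frame is $\{\top\}$.
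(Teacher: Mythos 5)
Your proposal is correct and follows essentially the same route as the paper: take the lattice-theoretic right adjoint $f_*$, note it preserves finite meets and is therefore an implicative morphism, and observe that the adjunction in $\ArrAlg$ reduces to the lattice adjunction since the separator $\{\top\}$ makes $\vdash$ coincide with the pointwise order. Your write-up merely spells out this last reduction, which the paper leaves implicit.
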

\begin{proof}
Let $f_* : \O(X) \to \O(Y)$ be the right adjoint of $f^*$, i.e. the monotone function:
        \[f_*(x) = \biglor \Set{ y | f^*(y) \leq x}\]
        As it is monotone and preserves finite meets, $f_*$ is an implicative morphism $\O(X) \to \O(Y)$; in particular, it is clearly right adjoint to $f^*$ in $\ArrAlg$, which is then computationally dense.
\end{proof}

The converse is also true: computationally dense implicative morphisms between frames are themselves frame homomorphisms. Indeed, let $f : \O(Y) \to \O(X)$ be a computationally dense implicative morphism between frames and let $h : \O(X) \to \O(Y)$ be right adjoint to it. As above, we know that $f$ and $h$ preserve finite meets, so in particular they are monotone; moreover, as the logical and the evidential order coincide, $h$ is right adjoint to $f$ as monotone maps between the posets underlying $\O(Y)$ and $\O(X)$, which means that $f$ preserves all joins and hence that it is a frame homomorphism. Summing up, we have shown the following.

\begin{proposition}
The canonical inclusion of frames into arrow algebras determines a 2-fully faithful 2-functor $\Frm \inc \ArrAlgcd$.

Explicitly, this means that for all frames $\O(Y)$ and $\O(X)$ there is an equivalence of preorder categories:
\[\Frm(\O(Y), \O(X) ) \equiv \ArrAlgcd(\O(Y),\O(X))\]
\end{proposition}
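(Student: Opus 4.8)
The plan is to assemble this statement from the facts established in the immediately preceding discussion, organizing them along the two requirements for a 2-fully faithful 2-functor: that the canonical assignment lands in $\ArrAlgcd$, and that each induced map on hom-preorders is an equivalence of preorder categories. First I would recall that the canonical inclusion already determines a 2-functor $\Frm \to \ArrAlg$ which preserves and reflects the order on each hom-preorder: since the separator of a frame is $\{\top\}$, the logical order $\vdash_I$ agrees with the pointwise order for every $I$, so for frame homomorphisms $f,f' : \O(Y) \to \O(X)$ one has $f \le f'$ in $\Frm$ exactly when $f \vdash f'$ in $\ArrAlg$. To see that this 2-functor factors through the wide subcategory $\ArrAlgcd$, I would invoke the preceding proposition, which shows that every frame homomorphism $f^*$ is computationally dense, its right adjoint in $\ArrAlg$ being the frame-theoretic right adjoint $f_*$.

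It then remains to prove 2-full-faithfulness, i.e. that for all frames the map $\Frm(\O(Y),\O(X)) \to \ArrAlgcd(\O(Y),\O(X))$ is an equivalence of preorder categories. Since $\ArrAlgcd$ is a wide subcategory of $\ArrAlg$ and so inherits the order, order-preservation and order-reflection of this map are immediate from the previous paragraph. Thus the only substantive point is essential surjectivity of the hom-map: every computationally dense implicative morphism $f : \O(Y) \to \O(X)$ must be (isomorphic to, in fact equal to) a frame homomorphism. Here I would reproduce the converse argument given just above the statement: $f$ preserves the top element by condition (i), and by \autoref{lem:implicative morphisms preserve logical meets} it preserves binary logical meets, which on a frame coincide with evidential meets; hence $f$ preserves all finite meets and is in particular monotone.

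The heart of the argument, and what I expect to be the main obstacle, is upgrading the computational-density datum into genuine join-preservation. The right adjoint $h : \O(X) \to \O(Y)$ in $\ArrAlgcd$ witnesses $\id \vdash hf$ and $fh \vdash \id$; because the logical and evidential orders coincide on frames, these read as $\id \le hf$ and $fh \le \id$ for the underlying monotone poset maps, so that $h$ is a bona fide right adjoint of $f$ between the posets $\O(Y)$ and $\O(X)$. A monotone map possessing a right adjoint preserves all existing joins, and a frame is complete; combined with finite-meet preservation, this shows $f$ is a frame homomorphism. This establishes essential surjectivity of the hom-map, and together with order-preservation and order-reflection gives the desired equivalence of preorder categories, completing the verification that $\Frm \inc \ArrAlgcd$ is a 2-fully faithful 2-functor.
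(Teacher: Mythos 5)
Your proposal is correct and takes essentially the same route as the paper: the forward direction is handled by the frame-theoretic right adjoint $f_*$, and the converse combines finite-meet preservation (via \autoref{lem:implicative morphisms preserve logical meets} and the coincidence of logical and evidential orders on frames) with the observation that the $\ArrAlg$-adjunction $f \dashv h$ becomes a genuine poset adjunction, so that $f$ preserves all joins. The order-preservation and order-reflection bookkeeping you cite from the earlier discussion of the 2-functor $\Frm \to \ArrAlg$ is exactly how the paper assembles the statement as well.
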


\begin{remark}
In essence, this makes so that the canonical embedding of locales and their homomorphisms into localic triposes and geometric morphisms factors through (the opposite category of) arrow algebras and computationally dense implicative morphisms. In the `algebraic' notation we have been using throughout the paper, this gives the following diagram:
\[\begin{tikzcd}
	\Frm && {\Tripgeom{\Sets}} \\
	& \ArrAlgcd
	\arrow[hook, from=1-1, to=1-3]
	\arrow[hook, from=1-1, to=2-2]
	\arrow[hook, from=2-2, to=1-3]
\end{tikzcd}\]
\end{remark}

\subsection{Partial combinatory algebras}Let us start by summarizing the theory of transformations of realizability triposes coming from morphisms between PCAs, once again following \cite{zoethout2022}. 

First, let us start by linking morphisms of PCAs with cartesian transformations of realizability triposes. Any transformation of realizability triposes $\Phi : P_{\AA} \to P_{\BB}$ is given up to isomorphism by postcomposition with the function $f \coloneqq \Phi_{DA}(\id_{DA}): D A \to D B$ at each component: as shown in \cite[Prop. 3.3.16]{zoethout2022}, $\Phi$ is cartesian if and only if $f$ is a morphism of PCAs, and the respective orders agree as well. In other words, we have the following.

\begin{proposition}\label{lextrans of realtrip}
The association $f \mapsto f \circ -$ is 2-functorial on downsets PCAs and, for all PCAs $\AA$ and $\BB$, it realizes an equivalence of preorder categories:
    \[\Hom\oPCA {D\AA} {D\BB} \equiv \Hom{\Tripcart\Sets}{P_{\AA}}{P_{\BB}}\]
\end{proposition}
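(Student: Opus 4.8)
The plan is to obtain this as an instance of the general correspondence \autoref{prop:lex trans is induced by morphism}, after identifying morphisms of PCAs $D\AA \to D\BB$ with implicative morphisms $D\AA \to D\BB$. The key preliminary observation is that, as recorded at the end of \autoref{subsec:pcas}, the realizability triposes coincide with the arrow triposes of the downset algebras: $P_{\AA} = P_{D\AA}$ and $P_{\BB} = P_{D\BB}$. Hence \autoref{prop:lex trans is induced by morphism}, applied to the arrow algebras $D\AA$ and $D\BB$, already supplies an equivalence of preorder categories
\[\Hom\ArrAlg{D\AA}{D\BB} \equiv \Hom{\Tripcart\Sets}{P_{\AA}}{P_{\BB}},\]
which sends an implicative morphism $g$ to the cartesian transformation $\Phi_g^+ = g \circ -$ and a cartesian transformation $\Phi$ back to its underlying function $\Phi_{DA}(\id_{DA})$. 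It therefore suffices to compare the homsets $\Hom\oPCA{D\AA}{D\BB}$ and $\Hom\ArrAlg{D\AA}{D\BB}$.

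To this end I would first exhibit the comparison functor. By \autoref{lem:applicative implicative}, every morphism of PCAs $D\AA \to D\BB$ is in particular an implicative morphism, so the identity on underlying functions defines an inclusion of preorder categories $\iota : \Hom\oPCA{D\AA}{D\BB} \to \Hom\ArrAlg{D\AA}{D\BB}$; by the remark following \autoref{lem:applicative implicative}, the two orders agree, so $\iota$ both preserves and reflects the order. The composite $g \mapsto \Phi_g^+$ of $\iota$ with the equivalence above is exactly the assignment $f \mapsto f \circ -$ of the statement, and its 2-functoriality is inherited from that of the 2-functor $\ArrAlg \inc \Tripcart\Sets$. The only missing ingredient is essential surjectivity of $\iota$: every implicative morphism $D\AA \to D\BB$ must be isomorphic to a morphism of PCAs.

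This is the crux, and it is where the external input enters. Given an implicative morphism $g : D\AA \to D\BB$, form the cartesian transformation $\Phi_g^+$ of \autoref{prop:morph induces cartesian transformation of triposes}; its underlying function is $(\Phi_g^+)_{DA}(\id_{DA}) = g \circ \id_{DA} = g$. By \cite[Prop. 3.3.16]{zoethout2022}, the underlying function of any cartesian transformation $P_{\AA} \to P_{\BB}$ is a morphism of PCAs; applied to $\Phi = \Phi_g^+$ this shows that $g$ itself is a morphism of PCAs, so $\iota$ is surjective on objects and \emph{a fortiori} essentially surjective. Composing the two equivalences then yields the claimed equivalence $\Hom\oPCA{D\AA}{D\BB} \equiv \Hom{\Tripcart\Sets}{P_{\AA}}{P_{\BB}}$.

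I expect the essential-surjectivity step to be the main obstacle, and the reason for leaning on \cite[Prop. 3.3.16]{zoethout2022} rather than reproving it. A self-contained argument would have to show that an arbitrary implicative morphism $D\AA \to D\BB$ can, up to isomorphism, be realized by elements $t, u \in (DB)^{\#}$ witnessing preservation of application and of order in the sense of \autoref{morphism of pcas}. The difficulty is that an implicative morphism only satisfies the weaker arrow condition (ii) of \autoref{def:implicative morphism of aas} and need not be monotone, so one would have to unwind the explicit definition of the implication $\arr$ on downsets to convert an arrow realizer into an application realizer; since this translation is precisely the content of the cited result, I would invoke it directly.
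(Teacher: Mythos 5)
Your proof is correct, but it takes a genuinely different route from the paper's. The paper obtains this proposition directly as a restatement of \cite[Prop.\ 3.3.16]{zoethout2022}: that result already asserts that a transformation of realizability triposes $\Phi : P_{\AA} \to P_{\BB}$, which up to isomorphism is postcomposition with $f \coloneqq \Phi_{DA}(\id_{DA})$, is cartesian \emph{if and only if} $f$ is a morphism of PCAs, with the orders agreeing as well; so the paper invokes it in full and makes no appeal to the arrow-algebra machinery at this point. You instead factor the statement through $\Hom\ArrAlg{D\AA}{D\BB}$: the equivalence $\Hom\ArrAlg{D\AA}{D\BB} \equiv \Hom{\Tripcart\Sets}{P_{\AA}}{P_{\BB}}$ comes from \autoref{prop:lex trans is induced by morphism}, the order-preserving-and-reflecting inclusion $\Hom\oPCA{D\AA}{D\BB} \to \Hom\ArrAlg{D\AA}{D\BB}$ comes from \autoref{lem:applicative implicative} together with the remark following it, and only the surjectivity of this inclusion is delegated to the external citation. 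The two routes hinge on the same outside input, but yours uses only the implication ``cartesian transformation $\Rightarrow$ morphism of PCAs'', deriving the converse direction internally from \autoref{prop:morph induces cartesian transformation of triposes}; moreover, your factorization establishes en route the identification $\Hom\oPCA{D\AA}{D\BB} = \Hom\ArrAlg{D\AA}{D\BB}$, which the paper records only \emph{afterwards}, as a separate proposition deduced from the present one. There is no circularity in this reordering, since you cite Zoethout's result directly rather than the statement being proved; the price is a slightly longer argument for a proposition the paper dispatches in one line, the gain is a smaller logical dependence on the external reference and an explicit structural picture of how the PCA-level and arrow-algebra-level homsets relate.
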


Instead, partial applicative morphisms $\AA \to \BB$ are characterized as those inducing \emph{regular} transformations of triposes, which we now introduce.

\begin{definition}
 A transformation of triposes $\Phi^+ : P \to Q$ is \emph{regular} if it is cartesian and preserves existential quantification, i.e. if:
    \[(\Phi^+)_Y \circ \exists_g \liso_Y \exists_g \circ (\Phi^+)_X\]
    for every function $g : X \to Y$.
    
    We denote with $\Tripreg\Sets$ the wide subcategory of $\Tripcart \Sets$ on regular transformations.
\end{definition}
\begin{remark}
Regular transformations preserve the interpretation of \emph{regular logic}, the fragment of finitary first-order logic defined by $\top$, $\land$ and $\exists$.
\end{remark}

Consider now a partial applicative morphism $f : \AA \to \BB$, that is, a morphism of PCAs $f : \AA \to D \BB$. Recall that $f$ corresponds essentially uniquely to the $D$-algebra morphism:
\[\tilde{f} : D\AA \to D \BB\qquad \tilde{f}(\alpha) \coloneqq \bigcup_{a\in \alpha} f(a)\]
and the 2-functor defined by $f \mapsto \tilde{f}$ realizes an equivalence of preorder categories between partial applicative morphisms $\AA \to \BB$ and $D$-algebra morphisms $D\AA \to D \BB$. 

Therefore, the correspondence stated in the previous proposition restricts to partial applicative morphisms and regular transformations: indeed, a cartesian transformation $g \circ - : P_{\AA} \to P_{\BB}$ is regular if and only if $g : D \AA \to D \BB$ is a $D$-algebra morphism, i.e. if and only if it is up to isomorphism of the form $g = \tilde{f}$ for an essentially unique partial applicative morphism $f : \AA \to \BB$. In other words, we have the following.

\begin{proposition}\label{regtrans of realtrip}
The association $f \mapsto \tilde{f} \circ -$ determines a 2-fully faithful 2-functor $\oPCA_D \inc \Tripreg\Sets$.

Explicitly, this means that for all PCAs $\AA$ and $\BB$ there is an equivalence of preorder categories:
    \[\Hom{\oPCA_D} {\AA} {\BB} \equiv \Hom{\Tripreg\Sets}{P_{\AA}}{P_{\BB}}\]
\end{proposition}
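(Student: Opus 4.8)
The plan is to deduce \autoref{regtrans of realtrip} by restricting the equivalence of \autoref{lextrans of realtrip} to the appropriate subclasses on each side. Recall that \autoref{lextrans of realtrip} provides an equivalence of preorder categories between morphisms of PCAs $g : D\AA \to D\BB$ and cartesian transformations $P_{\AA} \to P_{\BB}$, via $g \mapsto g \circ -$. On the other hand, the equivalence recalled just above sends a partial applicative morphism $f : \AA \to \BB$ to the $D$-algebra morphism $\tilde{f} : D\AA \to D\BB$, and identifies $\Hom{\oPCA_D}{\AA}{\BB}$ with the subpreorder of $\Hom\oPCA{D\AA}{D\BB}$ on $D$-algebra morphisms. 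Composing these two equivalences, everything reduces to a single matching statement: a cartesian transformation $g \circ - : P_{\AA} \to P_{\BB}$ is \emph{regular} if and only if $g : D\AA \to D\BB$ is isomorphic to a $D$-algebra morphism. Granting this, 2-functoriality and the preservation/reflection of order are inherited from the two equivalences being stacked, and the resulting comparison is $f \mapsto \tilde{f} \circ -$ by construction.

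The heart of the matter is thus how $g \circ -$ interacts with existential quantification. First I would record that, since the downsets PCA is compatible with joins, in a realizability tripos the left adjoint to $h^*$ along $h : X \to Y$ is computed as the union $\exists_h(\phi)(y) = \bigcup_{h(x)=y} \phi(x)$, and likewise in $P_{\BB}$. Regularity of $g \circ -$ then unwinds, for each $h : X \to Y$ and each $\phi$, to the condition
\[ g\Bigl( \bigcup_{h(x)=y} \phi(x) \Bigr) \liso \bigcup_{h(x)=y} g(\phi(x)) \]
in $P_{\BB}(Y)$. The direction $(\Leftarrow)$ is then immediate: a $D$-algebra morphism preserves unions on the nose, so $g(\exists_h\phi) = \exists_h(g\circ\phi)$ holds strictly and $g\circ-$ is regular; as regularity is stable under isomorphism of transformations, the same holds for any $g$ isomorphic to a $D$-algebra morphism.

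For the converse $(\Rightarrow)$ I would specialize to the terminal maps $h = {!} : J \to 1$, so that $\exists_h$ becomes the total union over $J$; regularity then yields $g(\bigcup_j \alpha_j) \liso \bigcup_j g(\alpha_j)$ for every $J$-indexed family in $DA$, i.e. $g$ preserves arbitrary set-indexed unions up to a realizer. To pass from this to ``isomorphic to a genuine $D$-algebra morphism'', I would set $f(a) \coloneqq g(\downl{\{a\}})$, which is a partial applicative morphism $\AA \to \BB$ (being the composite of $g$ with the unit $\AA \to D\AA$ of the monad); writing any downset as $\alpha = \bigcup_{a \in \alpha} \downl{\{a\}}$ and using union-preservation of $g$ gives
\[ \tilde{f}(\alpha) = \bigcup_{a\in\alpha} f(a) = \bigcup_{a\in\alpha} g(\downl{\{a\}}) \liso g\Bigl(\bigcup_{a\in\alpha}\downl{\{a\}}\Bigr) = g(\alpha), \]
exhibiting $g \liso \tilde{f}$ and closing the loop.

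I expect the main obstacle to be precisely the last identification. The union-formula for $\exists_h$, the notion of regularity, and the $f \leftrightarrow \tilde{f}$ correspondence are all phrased ``up to a realizer'', so care is needed to ensure that the chain of isomorphisms above is realized \emph{uniformly} by elements of the separator, independent of the index $j$ or the downset $\alpha$, rather than merely holding pointwise; and one must confirm that the union description of $\exists_h$ is indeed valid in the relative, ordered generality of the PCAs considered here. Once these uniformity points are settled, composing the two equivalences delivers the stated 2-fully faithful 2-functor $\oPCA_D \inc \Tripreg\Sets$.
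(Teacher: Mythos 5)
Your proposal is correct and follows essentially the same route as the paper: the paper likewise obtains \autoref{regtrans of realtrip} by restricting the equivalence of \autoref{lextrans of realtrip}, using exactly the matching statement that a cartesian transformation $g \circ -$ is regular if and only if $g$ is (up to isomorphism) a $D$-algebra morphism, i.e.\ of the form $\tilde{f}$. The union-preservation argument you sketch (existentials computed as unions since downsets PCAs are compatible with joins) is the same justification the paper records in a later remark, citing \cite[Lem.~5.3]{berg2023arrow}, and the uniformity concern you raise is handled by applying regularity to the universal family over the index set $DA$ rather than pointwise over families.
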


Finally, we can specify the previous correspondence to geometric morphisms by means of computational density. First, note how computational density can be characterized by the existence of right adjoints in $\oPCA$.

\begin{lemma}[\protect{\cite[Cor. 2.3.15]{zoethout2022}}]\label{zoe-2.3.15}
    Let $f : \AA\to \BB$ be a partial applicative morphism. Then, $f$ is computationally dense if and only if $\tilde{f}: D \AA \to D\BB$ has a right adjoint in $\oPCA$.
\end{lemma}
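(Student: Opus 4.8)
The plan is to route the statement through the tripos-level correspondence of \autoref{lextrans of realtrip}, which converts ``having a right adjoint in $\oPCA$'' into a statement about geometric transformations that can be settled by elementary manipulation of realizers.

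First I would record a clean reformulation of the right-hand side. By \autoref{lextrans of realtrip}, postcomposition $h\mapsto h\circ-$ is an order-respecting equivalence $\Hom\oPCA{D\BB}{D\AA}\equiv\Hom{\Tripcart\Sets}{P_{\BB}}{P_{\AA}}$, under which $\Phi^+_{\tilde f}:=\tilde f\circ-$ is the cartesian transformation $P_{\AA}\to P_{\BB}$ attached to $\tilde f$. Since a right adjoint between preorders preserves finite meets, any right adjoint of $\Phi^+_{\tilde f}$ in $\Trip\Sets$ is automatically cartesian; hence $\tilde f$ admits a right adjoint in $\oPCA$ if and only if $\Phi^+_{\tilde f}$ is geometric. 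This translation also hands us for free the fact that the representing map $g:=(\Phi_+)_{DB}(\id_{DB})$ of such a right adjoint is a genuine morphism of PCAs, a point that would otherwise be laborious (indeed, the application-preservation axiom for $g$ is not transparent from its formula).

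So the task reduces to: $f$ is computationally dense if and only if the pointwise right adjoint $(\Phi_+)_I$ exists. For the forward direction I would take the density realizer $m\in B^{\#}$ and propose the candidate right adjoint as postcomposition with
\[ g(\beta):=\downl{\Set{a\in A | \downl{\{m\}}\cdot f(a)\subseteq\beta}},\qquad \beta\in DB. \]
The adjunction bijection $\tilde f\circ\phi\vdash_I\psi \iff \phi\vdash_I g\circ\psi$ then splits into two realizer computations, uniform in $I$: the right-to-left implication uses only the application-realizer $\tau$ of $f$ (via $\tau f(a)f(a')\subseteq f(aa')$), while the left-to-right implication is exactly where density enters -- given a realizer $e\in B^{\#}$ for $\tilde f\circ\phi\vdash_I\psi$, applying density with $s:=e$ yields the $r\in A^{\#}$ witnessing $\phi\vdash_I g\circ\psi$. (At the $\oPCA$ level the counit inequality is witnessed by $m\circ\upsilon$, using the order-realizer $\upsilon$ of $f$ and $\upsilon f(a)\subseteq f(a)$, and the unit by the density witness for $s=\mathbf{i}$.) Being a right adjoint, $g\circ-$ is cartesian, so \autoref{lextrans of realtrip} upgrades $g$ to a morphism of PCAs, finishing this direction.

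For the converse I would assume $\tilde f\dashv g$ in $\oPCA$, read off the counit realizer $m\in B^{\#}$ with $\downl{\{m\}}\cdot\tilde f(g(\beta))\subseteq\beta$ for all $\beta$, and recover the density datum: given $s\in B^{\#}$, I instantiate the adjunction at $I:=A$ with $\phi(a):=\downl{\{a\}}$ and $\psi(a):=sf(a)$, observe that $s\circ\upsilon$ realizes $\tilde f\circ\phi\vdash_A\psi$, transpose to obtain $d\in A^{\#}$ with $da\in g(sf(a))$ for every $a$, and conclude $mf(da)\subseteq sf(a)$ from the counit, so $r:=d$ is the required witness. The main obstacle throughout is the bookkeeping forced by $\vdash_I$: every realizer must be chosen uniformly over the whole index (equivalently over all $a\in A$), so one must verify that $m$, $\tau$, $\upsilon$ and the density witnesses are independent of the index, and must carry the partiality side-conditions ($\downr{(-)}$, and the restriction to $a$ with $\downr{sf(a)}$) correctly through each transposition. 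Once \autoref{lextrans of realtrip} has absorbed the morphism-of-PCAs axioms for $g$, these uniform realizer manipulations are the only real content.
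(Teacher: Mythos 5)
Your proposal addresses a statement the paper never proves: \autoref{zoe-2.3.15} is imported verbatim from Zoethout's thesis, so there is no in-paper argument to compare against and your reconstruction has to stand on its own --- which it does. The route is sound and non-circular: \autoref{lextrans of realtrip} (itself an imported result) does not depend on \autoref{zoe-2.3.15}, and you correctly avoid \autoref{geo of realtrip}, which does. The translation step is valid, since a right adjoint between preorders preserves whatever finite meets exist, so any pointwise right adjoint of $\tilde f\circ-$ is automatically cartesian, and the order-preserving-and-reflecting equivalence of \autoref{lextrans of realtrip} then transports adjunctions between $\oPCA$ and transformations of triposes in both directions. The two realizer computations are the classical Hofstra--van Oosten/Zoethout ones and both check out: density at $s=\mathbf{i}$ gives the unit and $m\circ\upsilon$ the counit for your candidate $g$, while conversely transposing $s\circ\upsilon$ along the adjunction at $I=\Set{a\in A | \downr{sf(a)}}$, $\phi(a)=\downl{\{a\}}$, $\psi(a)=sf(a)$ and then applying the counit recovers the density witness $r:=d$, with $m$ uniform in $s$ as required. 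What your detour buys, compared with an argument carried out entirely inside $\oPCA$, is exactly the point you flag: the application- and order-preservation axioms for the right adjoint $g$ never have to be verified by hand, because cartesianness of a right adjoint is formal and \autoref{lextrans of realtrip} converts it into the morphism-of-PCAs property.

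One detail should be corrected. You assert that the right-to-left implication of the adjunction bijection ``uses only the application-realizer $\tau$''. Because your $g(\beta)$ is defined with a downward closure and through $m$, that direction in fact needs $\upsilon$ and $m$ as well: from $da\in g(\psi(i))$ one only gets $da\leq a''$ for some $a''$ with $\downl{\{m\}}\cdot f(a'')\subseteq\psi(i)$, so the realizer has the shape $\lambda^{*}x.\,m(\upsilon(\tau f(d)x))$ rather than anything built from $\tau$ alone (similarly, the counit computation needs the general instance $\upsilon f(a')\subseteq f(a'')$ for $a'\leq a''$, not the reflexive one you quote). This is harmless --- all the needed realizers exist uniformly by combinatory completeness --- but the bookkeeping you rightly identify as the real content of the proof should be stated accurately.
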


As the existence of right adjoints in $\oPCA$ precisely corresponds to the existence of right adjoints on the level of transformations of triposes, this yields the following. 

\begin{theorem}\label{geo of realtrip}
    Let $\oPCA_{D,\mathsf{cd}}$ be the wide sub(-bi)category of $\oPCA_D$ on computationally dense partial applicative morphisms. 

    The association $f \mapsto \tilde{f} \circ -$ determines a 2-fully faithful 2-functor ${\oPCA_{D,\mathsf{cd}} \inc \Tripgeom\Sets}$.

    Explicitly, this means that for all PCAs $\AA$ and $\BB$ there is an equivalence of preorder categories:
    \[\Hom{\oPCA_{D,\mathsf{cd}}} {\AA} {\BB} \equiv \Hom{\Tripgeom\Sets}{P_{\AA}}{P_{\BB}}\]
    In particular, a right adjoint of $\tilde{f} \circ - : P_{\AA} \to P_{\BB}$ is given by $h \circ - : P_{\BB} \to P_{\AA}$, where $h : D \BB \to D \AA$ is right adjoint to $\tilde{f}$ in $\oPCA$.
\end{theorem}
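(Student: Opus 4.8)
The plan is to deduce the statement from its regular analogue \autoref{regtrans of realtrip} by cutting both sides down to the appropriate sub-preorders. First I would record that every geometric transformation is regular: a transformation admitting a right adjoint preserves the fibrewise left adjoints $\exists_g$ by the standard mate argument (naturality gives $\Phi^+ g^* \iso g^* \Phi^+$, and the mate of this isomorphism under $\exists_g \dashv g^*$ and $\Phi^+ \dashv \Phi_+$ is again invertible), so $\Tripgeom\Sets$ sits inside $\Tripreg\Sets$ and $\Hom{\Tripgeom\Sets}{P_{\AA}}{P_{\BB}}$ is exactly the subpreorder of $\Hom{\Tripreg\Sets}{P_{\AA}}{P_{\BB}}$ on geometric transformations. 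Since $\oPCA_{D,\mathsf{cd}}$ is the wide subcategory of $\oPCA_D$ on computationally dense morphisms, and \autoref{regtrans of realtrip} already furnishes an equivalence of preorders $\Hom{\oPCA_D}{\AA}{\BB}\equiv\Hom{\Tripreg\Sets}{P_{\AA}}{P_{\BB}}$ via $f\mapsto \tilde{f}\circ-$, it suffices to check that this equivalence matches the two sub-preorders; that is, to prove the pointwise equivalence: for a partial applicative morphism $f:\AA\to\BB$, the morphism $f$ is computationally dense if and only if $\tilde{f}\circ-$ is geometric.

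For the forward direction, suppose $f$ is computationally dense. By \autoref{zoe-2.3.15} the $D$-algebra morphism $\tilde{f}:D\AA\to D\BB$ has a right adjoint $h:D\BB\to D\AA$ in $\oPCA$. Being a morphism of PCAs, $h$ is an implicative morphism by \autoref{lem:applicative implicative}, and since the two orders agree the relations $\tilde{f}h\leq\id_{DB}$ and $\id_{DA}\leq h\tilde{f}$ in $\oPCA$ read as $\tilde{f}h\vdash\id$ and $\id\vdash h\tilde{f}$ in $\ArrAlg$; thus $h$ is a right adjoint of $\tilde{f}$ in $\ArrAlg$, making $\tilde{f}$ computationally dense as an implicative morphism. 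By \autoref{thm:induced geometric morphism}, the transformation $\tilde{f}\circ-$ is then geometric, with right adjoint given by postcomposition with $h$; this simultaneously establishes the final \emph{in particular} clause.

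The backward direction is the crux. Assume $\tilde{f}\circ-$ is geometric. By \autoref{prop:geo is induced by cd morphism} it has a right adjoint transformation $\Phi_+$, which is cartesian (right adjoints preserve finite meets) and hence, by \autoref{prop:lex trans is induced by morphism}, of the form $h\circ-$ for the implicative morphism $h\coloneqq(\Phi_+)_{DB}(\id_{DB})$ right adjoint to $\tilde{f}$ in $\ArrAlg$. The main obstacle is that this $h$ need not be regular, i.e.\ need not be a morphism of PCAs (a $D$-algebra morphism); indeed the right adjoint in $\oPCA$ and the right adjoint in $\ArrAlg$ need not coincide, so one cannot apply \autoref{zoe-2.3.15} to $h$ directly and read off computational density of $f$. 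The resolution is precisely the correspondence flagged just before the statement, between right adjoints of $\tilde{f}$ in $\oPCA$ and right adjoints of $\tilde{f}\circ-$ at the level of transformations: one must show that the existence of the transformation-level right adjoint $\Phi_+$ forces the existence of a \emph{morphism-of-PCAs} right adjoint for $\tilde{f}$, which by \autoref{zoe-2.3.15} is exactly the computational density of $f$. This step is where the explicit downset structure of $D\AA$ and $D\BB$ and a computational-density witness $m\in(DB)^{\#}$ have to be extracted from $\Phi_+$. With the pointwise equivalence in hand, the quasi-inverse of \autoref{regtrans of realtrip} sends geometric transformations to computationally dense morphisms and $f\mapsto\tilde{f}\circ-$ sends computationally dense morphisms to geometric transformations, so the restriction is an equivalence of the two sub-preorders and yields the desired $2$-fully faithful $2$-functor $\oPCA_{D,\mathsf{cd}}\inc\Tripgeom\Sets$.
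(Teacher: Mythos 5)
Your overall strategy (restrict the equivalence of \autoref{regtrans of realtrip} to the sub-preorders on both sides, reducing everything to the pointwise claim that $f$ is computationally dense if and only if $\tilde{f}\circ-$ is geometric) is sound, and your forward direction is correct and is essentially the paper's argument: \autoref{zoe-2.3.15} gives a right adjoint $h$ of $\tilde{f}$ in $\oPCA$, which by \autoref{lem:applicative implicative} and the agreement of the orders is a right adjoint in $\ArrAlg$, so \autoref{thm:induced geometric morphism} makes $\tilde{f}\circ-$ geometric with direct image $h\circ-$, giving the \emph{in particular} clause. The genuine gap is the backward direction, which you yourself call ``the crux'' and then do not prove: you assert that one must extract a computational-density witness $m\in(DB)^{\#}$ from $\Phi_+$ by hand, using the explicit downset structure, and you never carry this out. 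As written, the proof is incomplete exactly at the step the theorem turns on.

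Moreover, the obstacle you describe there is not real; it comes from conflating ``morphism of PCAs'' with ``$D$-algebra morphism''. \autoref{zoe-2.3.15} requires a right adjoint of $\tilde{f}$ \emph{in $\oPCA$}, i.e.\ a mere morphism of PCAs $D\BB\to D\AA$ -- it is not required to be union-preserving, regular, or of the form $\tilde{g}$ for a partial applicative morphism $g:\BB\to\AA$ (indeed, direct images almost never are, and the theorem's own statement only asks $h$ to be right adjoint in $\oPCA$). The paper has already proved exactly what closes your gap: the proposition immediately following \autoref{lem:applicative implicative} shows that \emph{every} implicative morphism $D\BB\to D\AA$ is a morphism of PCAs, so that $\Hom\oPCA{D\BB}{D\AA}=\Hom\ArrAlg{D\BB}{D\AA}$ as preorders (and likewise for the endo-homs in which the adjunction inequalities live). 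Hence the implicative right adjoint $h=(\Phi_+)_{DB}(\id_{DB})$ you obtain from \autoref{prop:geo is induced by cd morphism} is automatically a morphism of PCAs and automatically right adjoint to $\tilde{f}$ in $\oPCA$, and \autoref{zoe-2.3.15} yields computational density of $f$ in one line; no witness extraction is needed. This is precisely how the paper argues (``the existence of right adjoints in $\oPCA$ precisely corresponds to the existence of right adjoints on the level of transformations of triposes,'' via \autoref{lextrans of realtrip} together with the fact that a right adjoint transformation is automatically cartesian), and it is spelled out again in the discussion preceding the paper's final proposition of that section.
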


Let us now see how arrow algebras fit in the picture.

First, recall by \autoref{lem:applicative implicative} that any morphism of PCAs $D \AA \to D \BB$, given two PCAs $\AA = (A,\leq, \cdot, A^{\#})$ and $\BB = (B, \leq, \cdot, B^{\#})$, is an implicative morphism between the associated arrow algebras. \autoref{prop:lex trans is induced by morphism} now allows us to easily address the converse.

\begin{proposition}
Let $f : D \AA \to D \BB$ be an implicative morphism. Then, $f$ is also a morphism of PCAs $D \AA \to D \BB$.

Therefore:
\[\Hom\oPCA{D\AA}{D\BB} = \Hom\ArrAlg{D\AA}{D\BB} \]
\end{proposition}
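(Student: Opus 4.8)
The plan is to avoid any direct combinatory manipulation and instead pass through the tripos-theoretic correspondences already in hand, exploiting that the arrow tripos $P_{D\AA}$ is nothing but the realizability tripos $P_{\AA}$ (and likewise $P_{D\BB} = P_{\BB}$). The idea is that both $\ArrAlg$ and $\oPCA$ embed into cartesian transformations of these triposes by the \emph{same} postcomposition map $g \mapsto g \circ -$, so that an implicative morphism and a morphism of PCAs are recognized by one and the same condition on the induced transformation.

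First I would take an implicative morphism $f : D\AA \to D\BB$ and form the transformation $\Phi_f^+ = f \circ -$. By \autoref{prop:morph induces cartesian transformation of triposes} this is a \emph{cartesian} transformation of arrow triposes $P_{D\AA} \to P_{D\BB}$, hence a cartesian transformation $P_{\AA} \to P_{\BB}$ of realizability triposes. I would then invoke the characterization recalled just before \autoref{lextrans of realtrip} (that is, \cite[Prop. 3.3.16]{zoethout2022}): a transformation $\Phi$ of realizability triposes is cartesian if and only if the function $\Phi_{DA}(\id_{DA})$, obtained by evaluating at the generic element $\id_{DA} \in P_{D\AA}(DA)$, is a morphism of PCAs. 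Since $(\Phi_f^+)_{DA}(\id_{DA}) = f \circ \id_{DA} = f$, cartesianness of $\Phi_f^+$ yields immediately that $f$ itself is a morphism of PCAs.

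The one point needing care --- and the place where a careless argument would only give a result up to isomorphism --- is the recovery of the defining function. If one merely argued by essential surjectivity of the postcomposition map from $\oPCA$, one would conclude $f \liso_{DA} g$ for some morphism of PCAs $g$, and would then have to separately know that the class of morphisms of PCAs is closed under $\liso$. Working with the literal transformation $f \circ -$, whose recovered function is $f$ on the nose, sidesteps this entirely. Finally, for the displayed equality of homsets I would combine this converse with \autoref{lem:applicative implicative} (every morphism of PCAs is an implicative morphism) to obtain equality of the two classes of functions, and then invoke the remark following \autoref{lem:applicative implicative} to see that the two `up to a realizer' orders coincide on them; hence $\Hom\oPCA{D\AA}{D\BB}$ and $\Hom\ArrAlg{D\AA}{D\BB}$ agree as preorder-enriched homsets.
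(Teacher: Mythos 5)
Your proposal is correct and follows essentially the same route as the paper's own proof: both pass $f$ through \autoref{prop:morph induces cartesian transformation of triposes} to obtain a cartesian transformation of realizability triposes, then invoke the characterization underlying \autoref{lextrans of realtrip} (i.e.\ \cite[Prop. 3.3.16]{zoethout2022}) to conclude that $f$ is a morphism of PCAs, and finish via \autoref{lem:applicative implicative} together with the coincidence of the two orders. Your additional observation that evaluating $\Phi_f^+$ at the generic element recovers $f$ on the nose, rather than merely up to isomorphism, is a careful refinement of the same argument rather than a different one.
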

\begin{proof}
Indeed, $f$ induces by postcomposition the cartesian transformation of realizability triposes $\Phi_f^+ : P_{D\AA} \to P_{D\BB}$; by \autoref{lextrans of realtrip}, therefore, $f$ is a morphism of PCAs $D \AA \to D \BB$. Recalling that the two orders coincide as well, we conclude that $\Hom\oPCA{D\AA}{D\BB} = \Hom\ArrAlg{D\AA}{D\BB}$.
\end{proof}

Moving on to partial applicative morphisms $\AA \to \BB$, recall that they correspond to regular transformations of triposes. The following definition is then obvious.

\begin{definition}\label{def:regular morphism}
Let $\A$ and $\B$ be arrow algebras. An implicative morphism $f : \A \to \B$ is \emph{regular} if, for every function $g : X \to Y$ and every $\alpha \in P_{\A}(X)$:
    \[f \circ \exists_g(\alpha) \liso_Y \exists_g (f\circ \alpha)\]

We denote with $\ArrAlgreg$ the wide subcategory of $\ArrAlg$ on regular implicative morphisms; the 2-functor of \autoref{prop:lex trans is induced by morphism} obviously restricts to a 2-fully faithful 2-functor $\ArrAlgreg \inc  \Tripreg \Sets$.
\end{definition}

\begin{remark}
Note that the inequality $\exists_g (f\alpha) \vdash_Y f\exists_g(\alpha)$ holds for every implicative morphism $f$: indeed, through the adjunction $\exists_g \dashv g^*$ it is equivalent to $f \alpha \vdash_X f \exists_g (\alpha) g$, which is ensured by the properties of $f$ since $\alpha \vdash_X \exists_g(\alpha) g$ by the unit of the same adjunction.  Therefore, regularity amounts to the inequality $f \exists_g (\alpha) \vdash_Y \exists_g(f \alpha)$.
\end{remark}

\begin{remark}
    Computationally dense implicative morphism are regular, since geometric transformations of triposes are regular. 
\end{remark}

Drawing from the previous results, we conclude that regular implicative morphisms between arrow algebras arising from PCAs arise themselves from partial applicative morphisms.

\begin{proposition}
    The 2-functor $\tilde{D}$ of \autoref{prop:tildeD} restricts to a 2-fully faithful 2-functor ${\oPCA_D \inc \ArrAlgreg}$.

    Explicitly, this means that for all PCAs $\AA$ and $\BB$, $\tilde{D}$ realizes an equivalence of preorder categories:
    \[\Hom{\oPCA_D}{\AA}{\BB} \iso \Hom\ArrAlgreg {D \AA}{D\BB}\]
\end{proposition}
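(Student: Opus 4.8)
The plan is to deduce the statement formally from the two equivalences already in hand, by routing both sides through regular transformations of triposes. The crucial point is that the realizability tripos $P_{\AA}$ of a PCA coincides, by construction, with the arrow tripos $P_{D\AA}$ of the arrow algebra $D\AA$ (and likewise $P_{\BB} = P_{D\BB}$), so the two hom-preorders to be compared both land in one and the same preorder category $\Hom{\Tripreg\Sets}{P_{D\AA}}{P_{D\BB}}$.

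First I would check that $\tilde{D}$ actually corestricts to $\ArrAlgreg$. For a partial applicative morphism $f : \AA \to \BB$, the transformation $\tilde{f} \circ - = \Phi^+_{\tilde{f}}$ is regular by \autoref{regtrans of realtrip}; by the reformulation of regularity recorded in \autoref{def:regular morphism}, this says exactly that the implicative morphism $\tilde{f}$ is regular, so $f \mapsto \tilde{f}$ defines a 2-functor $\oPCA_D \to \ArrAlgreg$. Fully-faithfulness on $2$-cells is then immediate: \autoref{prop:tildeD} already asserts that $f \mapsto \tilde{f}$ preserves and reflects the order, which in a preorder-enriched setting is precisely an order-embedding on each homset. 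The only remaining content is therefore essential surjectivity of each local functor.

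For essential surjectivity I would compose the two standing local equivalences. On the one hand, \autoref{regtrans of realtrip} gives $\Hom{\oPCA_D}{\AA}{\BB} \equiv \Hom{\Tripreg\Sets}{P_{\AA}}{P_{\BB}}$ through $f \mapsto \tilde{f} \circ -$; on the other, the regular restriction of \autoref{prop:lex trans is induced by morphism} noted in \autoref{def:regular morphism} gives $\Hom\ArrAlgreg{D\AA}{D\BB} \equiv \Hom{\Tripreg\Sets}{P_{D\AA}}{P_{D\BB}}$ through $g \mapsto g \circ -$, whose inverse sends a regular transformation $\Phi^+$ to $(\Phi^+)_{DA}(\id_{DA})$. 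Since the two targets coincide, tracing the round trip of $f$ gives $(\tilde{f} \circ -)_{DA}(\id_{DA}) = \tilde{f}$, so the composite equivalence is exactly the corestricted $\tilde{D}$. Being, up to this identification of targets, a composite of equivalences of preorder categories, each local functor of $\tilde{D}$ is an equivalence; this is the desired essential surjectivity and completes the proof of 2-full-faithfulness.

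The single substantive step hidden here is that essential-surjectivity claim, namely that every regular implicative morphism $g : D\AA \to D\BB$ satisfies $g \liso_{DA} \tilde{f}$ for some partial applicative morphism $f$. Conceptually this amounts to the statement that, among implicative morphisms of downset arrow algebras, being regular is the same (up to isomorphism) as being union-preserving, i.e. a $D$-algebra morphism. The tripos detour supplies this for free; a direct proof would instead have to extract from the regularity inequality $f \exists_g(\alpha) \vdash_Y \exists_g(f\alpha)$ that $f$ commutes with arbitrary unions up to isomorphism, and this is the main obstacle that the argument above circumvents.
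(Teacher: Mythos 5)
Your proposal is correct and follows essentially the same route as the paper: both arguments identify $P_{\AA}$ with $P_{D\AA}$, use the fact that regularity of an implicative morphism is by definition regularity of its induced transformation, and invoke \autoref{regtrans of realtrip} to conclude that every regular implicative morphism $D\AA \to D\BB$ arises, up to isomorphism, as $\tilde{D}g$ for an essentially unique partial applicative morphism $g$, with order preservation and reflection supplied by \autoref{prop:tildeD}. The union-preservation argument you flag as the circumvented obstacle is exactly what the paper records afterwards as an alternative proof, via compatibility with joins.
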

\begin{proof}
Let $f : D \AA \to D\BB$ be a regular implicative morphism. Then, $f$ induces by postcomposition the regular transformation of realizability triposes $\Phi_f^+ : P_{D\AA} \to P_{D\BB}$; by \autoref{regtrans of realtrip}, therefore, $f=\tilde{D}g$ for an essentially unique partial applicative morphism $g : \AA \to \BB$\footnote{We can also describe $g$ as $f\circ \delta'_{\AA}$.}. 

        Moreover, for $f,f' : \AA \to \BB$ partial applicative morphisms, we have already shown that $f \leq f'$ in $\Hom{\oPCA_D}{\AA}{\BB}$ if and only if $\tilde{D} f \vdash \tilde{D} f'$ in $\Hom \ArrAlg{D\AA}{D\BB}$.
\end{proof}

\begin{remark}
    Alternatively, the proof of the previous can be given by observing that a regular implicative morphism $f : D \AA \to D \BB$ is a union-preserving morphism of PCAs, and hence a $D$-algebra morphism: in fact, $D\AA$ and $D \BB$ are compatible with joins, meaning that existentials can be computed as unions (cfr. \cite[Lem. 5.3]{berg2023arrow}). 
\end{remark}

Finally, let us specialize to the case of computational density. Recall by \autoref{zoe-2.3.15} that a partial applicative morphism $f : \AA \to \BB$ is computationally dense if and only if $\tilde{f} : D \AA \to D \BB$ has a right adjoint in $\oPCA$. Since $\Hom\oPCA{D\AA}{D\BB}$ coincides as a preorder with $\Hom\ArrAlg{D\AA}{D\BB}$, this is also equivalent to $\tilde{f} : D\AA \to D \BB$ having a right adjoint in $\ArrAlg$, that is, to $\tilde{f}$ being computationally dense as an implicative morphism $D \AA \to D \BB$. In essence, we have shown the following.

\begin{proposition}
    The 2-functor $\tilde{D}$ of \autoref{prop:tildeD} restricts to a 2-fully faithful 2-functor $\oPCA_{D,\mathsf{cd}} \inc  \ArrAlgcd$.

Explicitly, this means that for all PCAs $\AA$ and $\BB$, $\tilde{D}$ realizes an equivalence of preorder categories:
    \[{ \oPCA_{D,\mathsf{cd}} } (\AA, \BB) \equiv \ArrAlgcd(D\AA, D\BB)\]
\end{proposition}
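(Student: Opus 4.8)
The plan is to deduce this from the regular case established just above, namely the 2-fully faithful inclusion $\tilde{D} : \oPCA_D \inc \ArrAlgreg$, by cutting both sides down to their computationally dense parts. Since computational density is, on either side, characterized by the existence of a right adjoint, and since the hom-preorders of downsets PCAs coincide whether computed in $\oPCA$ or in $\ArrAlg$, this restriction will be essentially automatic.

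First I would record the key equivalence, which the discussion preceding the statement already lays out. Fix a partial applicative morphism $f : \AA \to \BB$. By \autoref{zoe-2.3.15}, $f$ is computationally dense if and only if $\tilde{f} : D\AA \to D\BB$ has a right adjoint in $\oPCA$. The defining inequalities of such an adjunction -- the existence of some $h : D\BB \to D\AA$ with $\tilde{f}h \vdash \id_{D\BB}$ and $\id_{D\AA} \vdash h\tilde{f}$ -- involve only the two hom-preorders $\Hom\oPCA{D\AA}{D\BB}$ and $\Hom\oPCA{D\BB}{D\AA}$ together with composition. By the identification of morphisms of downsets PCAs with implicative morphisms established above, these preorders, their composition, and hence these very inequalities agree with the ones computed in $\ArrAlg$. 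Consequently $\tilde{f}$ has a right adjoint in $\oPCA$ exactly when it has one in $\ArrAlg$, that is (by \autoref{def:cd morphism of aas}) exactly when $\tilde{f}$ is a computationally dense implicative morphism. This shows that $\tilde{D}$ sends $\oPCA_{D,\mathsf{cd}}$ into $\ArrAlgcd$ and reflects computational density, so it restricts to a 2-functor $\oPCA_{D,\mathsf{cd}} \to \ArrAlgcd$.

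It then remains to verify 2-full faithfulness. For essential surjectivity, let $g : D\AA \to D\BB$ be a computationally dense implicative morphism; since computational density implies regularity (by the remark above), $g$ is regular, so by the preceding inclusion $\oPCA_D \inc \ArrAlgreg$ we have $g \liso \tilde{f}$ for an essentially unique partial applicative morphism $f : \AA \to \BB$, and the equivalence of the previous paragraph forces $f$ to be computationally dense. For the order, both $\oPCA_{D,\mathsf{cd}}$ and $\ArrAlgcd$ are the wide subcategories carrying the full sub-preorder on computationally dense morphisms; since the equivalence $\Hom{\oPCA_D}{\AA}{\BB} \iso \Hom\ArrAlgreg{D\AA}{D\BB}$ already preserves and reflects the order, it restricts verbatim to the computationally dense morphisms. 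Combining these yields the claimed equivalence ${\oPCA_{D,\mathsf{cd}}}(\AA,\BB) \equiv \ArrAlgcd(D\AA, D\BB)$.

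The only genuinely delicate point -- and the one I would take care to spell out -- is the middle step, that a right adjoint in $\oPCA$ is literally a right adjoint in $\ArrAlg$. This constructs nothing new: it rests entirely on the already-proven fact that, between downsets PCAs, morphisms of PCAs and implicative morphisms coincide as ordered sets with matching composition, so that the two adjunction inequalities are the same statement in both categories. Everything else is the routine bookkeeping of restricting a known equivalence of hom-preorders to full sub-preorders.
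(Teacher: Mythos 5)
Your proof is correct and takes essentially the same route as the paper: the paper's own argument is precisely the chain you identify, namely that by \autoref{zoe-2.3.15} together with the coincidence of $\Hom\oPCA{D\AA}{D\BB}$ and $\Hom\ArrAlg{D\AA}{D\BB}$ as preorders (with matching composition and identities), a partial applicative morphism $f$ is computationally dense if and only if $\tilde{f}$ has a right adjoint in $\ArrAlg$, i.e.\ is computationally dense as an implicative morphism. Your explicit treatment of essential surjectivity --- using that computationally dense implicative morphisms are regular and then invoking the regular-case equivalence --- is exactly the bookkeeping the paper leaves implicit when it concludes ``In essence, we have shown the following.''
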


\begin{remark}
    As for frames, in essence this makes so that the construction of realizability triposes and geometric morphisms from PCAs and partial applicative morphisms factors through arrow algebras and computationally dense implicative morphisms, giving the following diagram:
    \[\begin{tikzcd}
	{ \oPCA_{D,\mathsf{cd}} } && {\Tripgeom{\Sets}} \\
	& \ArrAlgcd
	\arrow[hook, from=1-1, to=1-3]
	\arrow[hook, from=1-1, to=2-2]
	\arrow[hook, from=2-2, to=1-3]
\end{tikzcd}\]
\end{remark}

\section{Inclusions of arrow triposes}\label{sec:incl}
In this section, we will specify the previous correspondence between computationally dense implicative morphisms and geometric morphisms of arrow triposes to the case of geometric inclusions into a given arrow tripos $P_{\A}$, and see how they correspond to nuclei on $\A$.

\subsection{Inclusions and surjections}Recall that a geometric morphism of arrow triposes $\Phi : P_{\B} \to P_{\A}$ is an \emph{inclusion} if $(\Phi_+)_I$ reflects the order for every set $I$, or equivalently if $(\Phi^+)_I(\Phi_+)_I(\phi) \liso_I \phi$ for every set $I$ and every $\phi : I \to B$. Dually, $\Phi$ is a \emph{surjection} if $(\Phi^+)_I$ reflects the order for every set $I$, or equivalently if $(\Phi_+)_I(\Phi^+)_I(\phi) \liso_I \phi$ for every set $I$ and every $\phi : I \to A$.

Recall moreover that, in any preorder-enriched category $\C$:
\begin{itemize}
    \item[--]an arrow $f : A \to B$ is a \emph{lax epimorphism} if, for every $C \in \C$, the map
\[-\circ f : \C(B, C) \to \C(A, C)\]
is fully-faithful as a functor between preorder categories, which explicitly means that $p \leq q$ for all $p, q : B \to C$ such that $pf \leq qf$;
\item[--]an arrow $f : A \to B$ is a \emph{lax monomorphism} if, for every $C \in\C$, the map
\[f\circ - : \C(C, A) \to \C(C, B)\]
is fully-faithful as a functor between preorder categories, which explicitly means that $p \leq q$ for all $p, q : C \to A$ such that $fp \leq fq$.
\end{itemize}

Specializing to $\ArrAlg$, we can then give the following definition.

\begin{definition}
    A computationally dense implicative morphism $f : \A \to \B$ is an \emph{implicative surjection} (resp. \emph{implicative injection}) if it is a lax epimorphism (resp. lax monomorphism) in $\ArrAlg$.
\end{definition}

\begin{proposition}\label{prop:inclusions}
    Let $f : \A \to \B$ be a computationally dense implicative morphism with right adjoint $h : \B \to \A$ and let $\Phi : P_{\B} \to P_{\A}$ be the induced geometric morphism of arrow triposes. The following are equivalent:
    \begin{enumerate}
        \item $\Phi$ is an inclusion;
        \item $f h \liso_B \id_B$;
        \item $f$ is an implicative surjection.
\end{enumerate}

Dually, the following are equivalent:
    \begin{enumerate}
        \item $\Phi$ is a surjection;
        \item $h f \liso_A \id_A$;
        \item $f$ is an implicative injection. 
    \end{enumerate}
\end{proposition}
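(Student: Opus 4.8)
The plan is to establish the two equivalences $(1)\Leftrightarrow(2)$ and $(2)\Leftrightarrow(3)$ for the inclusion case, the surjection case then following by a verbatim dual argument. Throughout I use that, by \autoref{thm:induced geometric morphism}, the inverse and direct images act by postcomposition, $(\Phi^+)_I=f\circ-$ and $(\Phi_+)_I=h\circ-$; consequently $\Phi^+\circ\Phi_+$ is postcomposition with $fh$ and $\Phi_+\circ\Phi^+$ is postcomposition with $hf$. I also repeatedly invoke the unit and counit of the adjunction $f\dashv h$ in $\ArrAlg$, namely $\id_A\vdash hf$ and $fh\vdash\id_B$, together with the fact that composition in $\ArrAlg$ is order-preserving in each variable.

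For $(1)\Leftrightarrow(2)$: by definition $\Phi$ is an inclusion exactly when $\Phi^+\circ\Phi_+\iso\id_{P_{\B}}$, i.e. when $(fh)\circ\phi\liso_I\phi$ for every $\phi:I\to B$. Since $fh\vdash\id_B$ is the counit, only the inequality $\phi\vdash_I(fh)\circ\phi$ needs to hold universally, and by the characterization of the order on implicative morphisms (that $g\vdash g'$ iff $g\circ\phi\vdash_I g'\circ\phi$ for all $\phi$) this holds for all $\phi$ precisely when $\id_B\vdash_B fh$, i.e. when $fh\liso_B\id_B$. In particular the forward direction also follows at once by evaluating at $I=B$, $\phi=\id_B$.

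The substantive step is $(2)\Leftrightarrow(3)$. Assuming $(2)$, let $p,q:\B\to\mathcal{C}$ satisfy $pf\vdash qf$; precomposing with $h$ gives $p(fh)\vdash q(fh)$, and postcomposing $fh\liso_B\id_B$ with $p$ and with $q$ yields $p(fh)\liso p$ and $q(fh)\liso q$, whence $p\vdash q$. Thus $f$ is a lax epimorphism, i.e. an implicative surjection. Conversely, assuming $(3)$, we already have $fh\vdash\id_B$, so it suffices to produce $\id_B\vdash fh$: apply the lax-epimorphism property to $p\coloneqq\id_B$ and $q\coloneqq fh$, for which it is enough to check $f=\id_B\circ f\vdash(fh)\circ f=fhf$; but this is obtained by postcomposing the unit $\id_A\vdash hf$ with $f$. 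Hence $\id_B\vdash fh$ and $(2)$ holds.

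The dual statement is proved identically after interchanging the roles of unit and counit: one uses that a surjection means $hf\liso_A\id_A$, and that being an implicative injection (lax monomorphism) is exactly the condition that upgrades the unit $\id_A\vdash hf$ to an isomorphism, the reverse inequality $hf\vdash\id_A$ being produced by applying the lax-mono property to $p\coloneqq hf$, $q\coloneqq\id_A$ and checking $fhf\vdash f$ via the counit $fh\vdash\id_B$. There is no deep difficulty here, as the argument is entirely formal; the only genuine obstacle is the bookkeeping of variance—keeping straight which adjunction inequality is available for free and on which side composition must be applied—so that in each case exactly one nontrivial inequality remains, supplied by the lax epi/mono hypothesis.
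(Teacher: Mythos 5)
Your proof is correct and takes essentially the same approach as the paper's: the same identification of $\Phi^+ \circ \Phi_+$ with postcomposition by $fh$ for $(1)\Leftrightarrow(2)$, and the same use of the counit together with the lax-epimorphism property applied to $p \coloneqq \id_B$, $q \coloneqq fh$ (reducing to $f \vdash fhf$, supplied by the unit) for $(2)\Leftrightarrow(3)$. The only difference is that you spell out the dual surjection case explicitly, which the paper leaves to the reader.
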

\begin{proof}
For (1) $\Leftrightarrow$ (2), recall that the inverse image $\Phi^+$ is given by postcomposition with $f$, and the direct image $\Phi_+$ is given by postcomposition with $h$: therefore, $\Phi$ is an inclusion if and only if $f h \phi \liso_I \phi$ for every set $I$ and every $\phi \in P_{\B}(I)$, which is equivalent to $f h \liso_B \id_B$.

        For (2) $\implies$ (3), suppose $p,q : \B \to \mathcal{C}$ are such that $pf \vdash qf$. Then, $pfh \vdash qfh$, and hence $p \vdash q$. 

        For (3) $\implies$ (2), of course $fh \vdash \id_B$; conversely, to show that $\id_B \vdash fh$ it then suffices to show that $f \vdash fhf$, which is ensured by $\id_A \vdash hf$. 
\end{proof}

\begin{corollary}
For all arrow algebras $\A$ and $\B$, there are equivalences of preorder categories between:
\begin{itemize}
    \item[--] implicative surjections $\A \to \B$ and geometric inclusions $P_{\B} \inc P_{\A}$;
    \item[--] implicative injections $\A \to \B$ and geometric surjections $P_{\B} \epi P_{\A}$.
\end{itemize}
\end{corollary}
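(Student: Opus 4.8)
The plan is to deduce both equivalences by restricting the equivalence of preorder categories supplied by \autoref{prop:geo is induced by cd morphism} along the dictionary recorded in \autoref{prop:inclusions}.

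First I would recall from \autoref{prop:geo is induced by cd morphism} that, for every pair of arrow algebras $\A$ and $\B$, there is an equivalence of preorder categories
\[\Hom{\ArrAlgcd}{\A}{\B} \equiv \Hom{\Tripgeom\Sets}{P_{\A}}{P_{\B}},\]
under which a computationally dense implicative morphism $f : \A \to \B$ with right adjoint $h : \B \to \A$ is sent, up to isomorphism, to the geometric morphism $\Phi : P_{\B} \to P_{\A}$ whose inverse image is $\Phi^+_f$ and whose direct image is induced by $h$, with quasi-inverse $\Phi \mapsto (\Phi^+)_A(\id_A)$. Since a geometric morphism $P_{\B} \to P_{\A}$ is exactly an object of $\Hom{\Tripgeom\Sets}{P_{\A}}{P_{\B}}$, this equivalence already identifies computationally dense implicative morphisms $\A \to \B$ with geometric morphisms $P_{\B} \to P_{\A}$ as preorder categories.

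Next I would invoke \autoref{prop:inclusions}, which under precisely this correspondence matches the relevant classes: such an $f$ is an implicative surjection if and only if the associated $\Phi$ is an inclusion, and $f$ is an implicative injection if and only if $\Phi$ is a surjection. It therefore only remains to observe that an equivalence of preorder categories restricts to an equivalence between the sub-preorders cut out by mutually corresponding, isomorphism-closed properties. Both sides satisfy this stability: being an implicative surjection (resp.\ injection) is a lax-epi (resp.\ lax-mono) condition of the form $pf \vdash qf \implies p \vdash q$ (resp.\ $fp \vdash fq \implies p \vdash q$), which is unaffected by replacing $f$ with an isomorphic $f'$ since then $pf \liso pf'$ (resp.\ $fp \liso f'p$); dually, being a geometric inclusion (resp.\ surjection) is the condition $\Phi^+ \Phi_+ \iso \id_{P_{\B}}$ (resp.\ $\Phi_+ \Phi^+ \iso \id_{P_{\A}}$), manifestly invariant under isomorphism of geometric morphisms.

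Given these two ingredients the restriction is immediate and carries no real obstacle. The full sub-preorder of $\Hom{\ArrAlgcd}{\A}{\B}$ on implicative surjections is sent by the equivalence into the full sub-preorder of $\Hom{\Tripgeom\Sets}{P_{\A}}{P_{\B}}$ on geometric inclusions, and the quasi-inverse $\Phi \mapsto (\Phi^+)_A(\id_A)$ sends the latter back into the former, yielding the first claimed equivalence; the implicative-injection/geometric-surjection case is entirely dual. The only point demanding any care, and it is routine, is the isomorphism-stability of the four properties just noted, which guarantees that the restricted correspondence is well defined on isomorphism classes and hence genuinely an equivalence of the sub-preorders.
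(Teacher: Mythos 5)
Your proposal is correct and follows exactly the paper's own route: the paper proves this corollary by "combining \autoref{prop:geo is induced by cd morphism} with the previous proposition" (\autoref{prop:inclusions}), which is precisely your two ingredients, with your isomorphism-stability check being the routine detail the paper leaves implicit.
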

\begin{proof}    Combining \autoref{prop:geo is induced by cd morphism} with the previous proposition.
\end{proof}

\begin{remark}
    Of course, an implicative morphism is an equivalence if and only if it is both an implicative surjection and an implicative inclusion.
\end{remark}

\subsection{Nuclei and subtriposes}Let $\A = (A,\evleq,\arr,S)$ be an arrow algebra.

Recall from \cite[Def. 3.11]{berg2023arrow} that a \emph{nucleus} on $\A$ is a function $j : A \to A$ such that:
\begin{enumerate}[label=\roman*.]
    \item if $a \evleq b$ then $ja \evleq jb$;
    \item $\evmeet_{a \in A} a \arr ja \in S$;
    \item $\evmeet_{a,b \in A} (a\arr jb)\arr ja \arr jb \in S$.
\end{enumerate}
which also imply:
\begin{enumerate}[label=\roman*.]
    \item[iv.] $\evmeet_{a\in A} jja \arr ja \in S$;
    \item[v.] $\evmeet_{a,b \in A} (a\arr b) \arr ja \arr jb \in S$;
    \item[vi.] $\evmeet_{a,b \in A} j(a\arr b)\arr ja \arr jb \in S$,
\end{enumerate}	
and we can even substitute (iii) in the definition with the conjunction of (iv) and (vi). Every nucleus $j$ on $\A$ determines the new arrow algebra $\mathcal{A}_j = (A, \evleq, \arr_j, S_j)$, where:
	\[	a \arr_j b \coloneqq a \arr jb \qquad S_j \coloneqq \Set{ a \in A | ja \in S}\]
We denote with $\vdash^j$ the logical order in $\A_j$: explicitly, $a \vdash^j b$ if and only if $j ( a \arr j b ) \in S$, which by the properties of nuclei and separators is equivalent to $a \arr j b \in S$ and hence to $a \vdash jb$. Note also that $S \subseteq S_j$: in fact, if $a \in S$, then since $\evmeet_{a \in A} a \arr ja \in S$ by modus ponens it follows that $ja \in S$, which precisely means $a\in S_j$.

With the machinery of the previous sections, \cite[Prop. 6.3]{berg2023arrow} can then be reduced to the following observation.

\begin{lemma}\label{lem:nuclei are right adjoints}
    $\id_A$ is an implicative surjection $\mathcal{A}\to \mathcal{A}_j$, with $j$ as a right adjoint.
\end{lemma}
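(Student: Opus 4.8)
The plan is to unpack what an implicative surjection with right adjoint $j$ requires and to dispatch each piece using the nucleus axioms together with the inclusion $S \subseteq S_j$ noted above. Concretely, I would (1) check that $\id_A : \A \to \A_j$ is an implicative morphism, (2) check that $j : \A_j \to \A$ is an implicative morphism, (3) verify the two inequalities exhibiting $j$ as right adjoint to $\id_A$ in $\ArrAlg$, thereby making $\id_A$ computationally dense in the sense of \autoref{def:cd morphism of aas}, and finally (4) invoke \autoref{prop:inclusions}, which once computational density is known reduces the claim of being an implicative surjection to the single isomorphism $\id_A \circ j \liso_{\A_j} \id_{\A_j}$.

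For steps (1) and (2) I would first observe that both maps are monotone for the evidential order -- $\id_A$ trivially, and $j$ by nucleus axiom (i) -- so that by \autoref{rem:monotone morphism} I may ignore condition (iii) of \autoref{def:implicative morphism of aas} and only verify conditions (i) and (ii). For $\id_A$, condition (i) is exactly $S \subseteq S_j$, while condition (ii) unwinds, using $x \arr_j y = x \arr jy$, to $\evmeet_{a,a'} (a \arr a') \arr j(a \arr ja') \in S_j$; this in fact already lies in $S$, obtained by intuitionistic reasoning composing the uniform inequality $(a \arr a') \vdash (a \arr ja')$ (from nucleus axiom (ii) via monotonicity of $a \arr (-)$) with the instance $(a \arr ja') \vdash j(a \arr ja')$ of the same axiom. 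For $j$, condition (i) is literally the definition of $S_j$, and condition (ii) reduces to $\evmeet_{a,a'} j(a \arr ja') \arr ja \arr ja' \in S$; I would derive this from nucleus axiom (vi) instantiated at $b = ja'$, which yields $j(a \arr ja') \arr ja \arr jja'$, postcomposed with the idempotency axiom (iv) in the form $jja' \vdash ja'$.

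Step (3) is then immediate: the unit $\id_\A \vdash j \circ \id_A$ is precisely $\evmeet_a a \arr ja \in S$, i.e.\ nucleus axiom (ii), and the counit $\id_A \circ j \vdash \id_{\A_j}$ asks for $\evmeet_a ja \arr_j a = \evmeet_a ja \arr ja \in S_j$, which holds since $\bf{i} \in S \subseteq S_j$ sits below each $ja \arr ja$. For step (4), by \autoref{prop:inclusions} it remains only to prove the reverse inequality $\id_{\A_j} \vdash \id_A \circ j$, that is $\id_A \vdash_{\A_j} j$, which unwinds to $\evmeet_a a \arr jja \in S_j$; this is where idempotency genuinely enters, but it follows at once by composing axiom (ii) with itself to get $a \vdash ja \vdash jja$ uniformly in $a$, placing the meet in $S$. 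The only step carrying real content is condition (ii) for $j$, where the discrepancy between $jja'$ and $ja'$ must be absorbed by idempotency; all remaining verifications are routine bookkeeping with $S \subseteq S_j$ and the standing intuitionistic-reasoning toolkit.
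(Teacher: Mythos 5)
Your proof is correct and takes essentially the same route as the paper's: the same verification of conditions (i)--(ii) for $\id_A : \A \to \A_j$ and for $j : \A_j \to \A$ (via nucleus axioms (ii), (iv), (vi) and intuitionistic reasoning), the same adjunction inequalities, and the same final appeal to \autoref{prop:inclusions} through $\id_A \vdash_A jj$. The only cosmetic difference is that you reduce memberships in $S_j$ to memberships in $S$ via the inclusion $S \subseteq S_j$, where the paper instead uses the equivalence $\phi \vdash^j_I \psi \iff \phi \vdash_I j\psi$ -- these amount to the same fact.
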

\begin{proof}
Let us start by showing that $\id_A$ is an implicative morphism $\mathcal{A}\to \mathcal{A}_j$.  As the evidential order is the same in $\A$ and $\A_j$, we only have to verify (i) and (ii) in \autoref{def:implicative morphism of aas}.
        \begin{enumerate}[label=\roman*.]
            \item If $a \in S$, then $ja \in S$, meaning that $a \in S_j$.

            \item Condition (ii) explicitly reads as:
            \[\evmeet_{a,a'}  ( a \arr a' ) \arr_j a \arr_j a' \in S_j\]
            i.e.:
            \[j \left(\evmeet_{a,a'} ( a \arr a' ) \arr j ( a \arr j a')\right) \in S \]
            so, by (ii) in the definition of a nucleus, it suffices to show:
            \[\evmeet_{a,a'} ( a \arr a' ) \arr j ( a \arr j a') \in S \]
            This, in turn, follows by intuitionistic reasoning from:
            \begin{gather*}
            \evmeet_{a ,a'} (a\arr a')\arr a \arr ja' \in S \\
            \evmeet_{a,a'}(a\arr ja')\arr j(a\arr ja')\in S 
            \end{gather*}
            both of which follow again from (ii).
        \end{enumerate}

Then, let us show that $j$ is an implicative morphism $\A_j\to\A$: again, recall that $j$ is monotone by definition, so we only have to verify (i) and (ii) in \autoref{def:implicative morphism of aas}.
\begin{enumerate}[label=\roman*.]
\item If $a \in S_j$, then by definition $j a \in S$.
\item Condition (ii) explicitly reads as:
\[\evmeet_{a,a'} j ( a \arr j a') \arr j a \arr j a' \in S\]
which follows from intuitionistic reasoning from:
\[\evmeet_{a,a'} j (a \arr ja') \arr ja \arr jja' \in S \qquad  \evmeet_{a'} jja' \arr ja' \in S\]
\end{enumerate}

Finally, let us show that $j : \A_j\to\A$ is right adjoint to $\id_A : \A \to \A_j$ in $\ArrAlg$.
\begin{itemize}
    \item[--] On one hand, $j \vdash^j_A \id_A$ explicitly reads as $j \vdash_A j$, which is clearly true.
    \item[--] On the other, $\id_A \vdash_A j$ is true as $j$ is a nucleus.
\end{itemize}
Moreover, we also have that $\id_A \vdash_A^j j$ as it explicitly reads as $\id_A \vdash_A j j$, which makes $\id_A$ an implicative surjection by \autoref{prop:inclusions}.
\end{proof}

\begin{corollary}
    Every nucleus $j$ on $\A$ induces a geometric inclusion of triposes $P_{\A_j}\inc P_{\A}$, given by:
     \[\begin{tikzcd}
	{P_{\A_j}} & {P_{\A}}
	\arrow[""{name=0, anchor=center, inner sep=0}, "{{\id_A} \circ - }"', curve={height=12pt}, from=1-2, to=1-1]
	\arrow[""{name=1, anchor=center, inner sep=0}, "{j {\circ} {-} }"', curve={height=12pt}, from=1-1, to=1-2]
	\arrow["\dashv"{anchor=center, rotate=-90}, draw=none, from=0, to=1]
\end{tikzcd} \]
\end{corollary}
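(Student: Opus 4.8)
The plan is to read this corollary off directly from \autoref{lem:nuclei are right adjoints} together with the general machinery of \autoref{ch:arrtrip} relating computationally dense implicative morphisms to geometric morphisms of triposes; there is essentially no new computation to perform, only bookkeeping.

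First I would observe that \autoref{lem:nuclei are right adjoints} has already done the substantive work: it establishes that $\id_A : \A \to \A_j$ is an implicative morphism admitting $j : \A_j \to \A$ as a right adjoint in $\ArrAlg$. By \autoref{def:cd morphism of aas}, the existence of this right adjoint is exactly what it means for $\id_A$ to be \emph{computationally dense}.

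Next I would feed this computationally dense morphism into \autoref{thm:induced geometric morphism}, taking $f \coloneqq \id_A$ and $h \coloneqq j$. This immediately yields a geometric morphism of triposes $P_{\A_j} \to P_{\A}$ whose inverse image $\Phi^+ : P_{\A} \to P_{\A_j}$ is postcomposition with $\id_A$ and whose direct image $\Phi_+ : P_{\A_j} \to P_{\A}$ is postcomposition with $j$, precisely the adjoint pair drawn in the diagram.

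To promote this geometric morphism to a geometric \emph{inclusion}, I would invoke the equivalence $(1)\Leftrightarrow(3)$ of \autoref{prop:inclusions}: since \autoref{lem:nuclei are right adjoints} also asserts that $\id_A$ is an implicative surjection, the induced geometric morphism is an inclusion $P_{\A_j} \inc P_{\A}$. No step presents an obstacle; the only subtlety worth flagging is keeping track of directions, as the inverse image runs $P_{\A} \to P_{\A_j}$ while the geometric morphism (and hence the inclusion arrow) is named in the direct-image direction $P_{\A_j} \inc P_{\A}$, consistent with the convention fixed in \autoref{geometric morphism of triposes}.
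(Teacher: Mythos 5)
Your proposal is correct and follows exactly the route the paper intends: the corollary is stated without a separate proof precisely because it is read off from \autoref{lem:nuclei are right adjoints} (which makes $\id_A : \A \to \A_j$ a computationally dense implicative surjection with right adjoint $j$) combined with \autoref{thm:induced geometric morphism} and \autoref{prop:inclusions}, which is the chain of citations you give. Your bookkeeping of the directions and of which map is the inverse versus direct image also matches the diagram in the statement.
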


However, we are now in the position to do more than that: namely, we can recover and extend \autoref{standard inclusion} to a correspondence between subtoposes of an arrow topos and nuclei on the underlying arrow algebra, hence proving the converse of the previous.

Recall in fact by the discussion in \autoref{sec:triposes} that we have an equivalence of preorder categories between subtriposes of $P_{\A}$ and closure transformations on $P_{\A}$, that is, transformations $\Phi_j : P_{\A} \to P_{\A}$ which are cartesian, inflationary and idempotent:
\[\SubTrip{ P_{\A}} \iso \op{\ClTrans {P_{\A}}}\]
By the correspondence in \autoref{prop:lex trans is induced by morphism}, a transformation $\Phi_j : P_{\A} \to P_{\A}$ is a closure transformation on $P_{\A}$, if and only if the function $j \coloneqq (\Phi_j)_A(\id_A) : A \to A$ inducing it up to isomorphism satisfies the following:
\begin{enumerate}[label=\roman*.]
    \item $j$ is an implicative morphism $\A \to \A$;
    \item $\id_A \vdash j$;
    \item $j j \liso j$.
\end{enumerate}
Assuming up to isomorphism $j$ to be monotone with respect to the evidential order in $\A$ as in \autoref{lem:monotonicity}, note then how this is equivalent to $j$ satisfying (i), (ii), (iv) and (vi) in the definition of a nucleus, which as we've noted is equivalent to asking that $j$ is a nucleus.  Since the association $j \mapsto \Phi_j$ also preserves and reflects the order, we conclude with the following.

\begin{proposition}\label{prop:nuclei and subtriposes}
    Let $\N \A$ be the set of nuclei on $\A$, with the preorder induced by $P_{\A}(A)$. Then, \autoref{prop:lex trans is induced by morphism} yields an equivalence of preorder categories:
    \[\ClTrans {P_{\A}} \iso \N \A \]
    so, in particular:
    \[ \SubTrip{P_{\A}} \iso \op {\N \A }\]
\end{proposition}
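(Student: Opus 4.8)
The plan is to obtain both displayed equivalences by specializing the dictionary between implicative morphisms and cartesian transformations that has already been set up. The equivalence $\ClTrans{P_{\A}} \iso \N\A$ is the genuinely new content, and $\SubTrip{P_{\A}} \iso \op{\N\A}$ then follows by composition. First I would apply \autoref{prop:lex trans is induced by morphism} in the endomorphism case $\A = \B$, which gives an order-preserving and order-reflecting equivalence between $\Hom{\ArrAlg}{\A}{\A}$ and $\Hom{\Tripcart\Sets}{P_{\A}}{P_{\A}}$. Under it an implicative morphism $j$ goes to the cartesian transformation $\Phi_j = j \circ -$, while conversely a cartesian transformation is recovered, up to $\liso_A$, as $j \coloneqq (\Phi_j)_A(\id_A)$. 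The task thereby reduces to matching, across this equivalence, the distinguished subfamilies $\ClTrans{P_{\A}}$ and $\N\A$.

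Next I would translate the two extra conditions defining a closure transformation. Inflationarity $\id_{P_{\A}} \leq \Phi_j$ corresponds to $\id_A \vdash j$, and idempotency $\Phi_j\Phi_j \iso \Phi_j$ corresponds to $jj \liso j$; together with the cartesian condition ``$j$ is an implicative morphism'', these are exactly the three conditions (i)--(iii) recorded just before the statement. Since \autoref{prop:lex trans is induced by morphism} pins down $j$ only up to $\liso_A$, at this point I would invoke \autoref{lem:monotonicity} to choose a representative that is monotone for the evidential order; this alters neither $\Phi_j$ up to isomorphism nor any of the three $\liso_A$-invariant conditions.

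Then comes the core identification: a monotone implicative morphism $j$ with $\id_A \vdash j$ and $jj \liso j$ is precisely a nucleus. Monotonicity is nucleus axiom (i); condition (ii) of \autoref{def:implicative morphism of aas} unpacks to nucleus axiom (vi); the inflationary condition $\id_A \vdash j$ is nucleus axiom (ii); and the half $jj \vdash j$ of idempotency is nucleus axiom (iv), the reverse half $j \vdash jj$ being automatic from monotonicity together with (ii). By the remark following the nucleus definition, axiom (iii) may be replaced by the conjunction of (iv) and (vi), so these data do constitute a nucleus, and conversely every nucleus is such a monotone implicative morphism. As $j \mapsto \Phi_j$ preserves and reflects the order, and the order on $\N\A$ is by definition the one induced from $P_{\A}(A)$, this yields the equivalence $\ClTrans{P_{\A}} \iso \N\A$.

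Finally I would compose with the general equivalence $\SubTrip{P} \iso \op{\ClTrans{P}}$ furnished by \autoref{thm:subtoposes and subtriposes} for the canonically presented tripos $P_{\A}$, obtaining $\SubTrip{P_{\A}} \iso \op{\N\A}$. The one delicate point throughout is the bookkeeping of ``up to isomorphism'': one must pass to the monotone representative of \autoref{lem:monotonicity} before comparing with the nucleus axioms, and check that conditions (i)--(iii) are invariant under $\liso_A$ so that this replacement is harmless. That is the main (and essentially only) obstacle; everything else is a direct translation between the two axiom lists.
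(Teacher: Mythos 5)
Your proposal is correct and follows essentially the same route as the paper: apply \autoref{prop:lex trans is induced by morphism} in the endomorphism case to translate closure transformations into implicative morphisms $j$ satisfying $\id_A \vdash j$ and $jj \liso j$, pass to a monotone representative via \autoref{lem:monotonicity}, match these conditions against nucleus axioms (i), (ii), (iv), (vi), and compose with the equivalence $\SubTrip{P_{\A}} \iso \op{\ClTrans{P_{\A}}}$ from \autoref{thm:subtoposes and subtriposes}. Your axiom-by-axiom bookkeeping (including the observation that $j \vdash jj$ comes for free from inflationarity, and that the conditions are $\liso_A$-invariant so the monotone replacement is harmless) is in fact slightly more explicit than the paper's own discussion.
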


\begin{corollary}
Every geometric inclusion of toposes into $\AT \A$ is induced, up to equivalence, by a geometric inclusion of triposes of the form:
   \[\begin{tikzcd}
	{P_{\A_j}} & {P_{\A}}
	\arrow[""{name=0, anchor=center, inner sep=0}, "{{\id_A} \circ - }"', curve={height=12pt}, from=1-2, to=1-1]
	\arrow[""{name=1, anchor=center, inner sep=0}, "{j {\circ} {-} }"', curve={height=12pt}, from=1-1, to=1-2]
	\arrow["\dashv"{anchor=center, rotate=-90}, draw=none, from=0, to=1]
\end{tikzcd} \]
for some nucleus $j$ on $\A$.
\end{corollary}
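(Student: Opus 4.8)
The plan is to chain together the two main equivalences already established and read off the explicit shape of the inclusion from the nucleus construction. First I would apply \autoref{thm:subtoposes and subtriposes}(2) to the canonically presented tripos $P_{\A}$: since $\AT\A = \Sets[P_{\A}]$, every geometric inclusion of toposes into $\AT\A$ is, up to equivalence, of the form $\Sets[Q] \inc \Sets[P_{\A}]$ for an essentially unique geometric inclusion of triposes $Q \inc P_{\A}$. Equivalently, the datum is classified by an element of $\SubTrip{P_{\A}}$, so the topos-level problem is reduced to identifying the tripos-level subobjects of $P_{\A}$.

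Next I would invoke \autoref{prop:nuclei and subtriposes}, which supplies the equivalence $\SubTrip{P_{\A}} \iso \op{\N\A}$. Under it, the subtripos $Q \inc P_{\A}$ corresponds up to equivalence to a nucleus $j$ on $\A$; concretely $j$ is recovered as $j \coloneqq (\Phi_+\Phi^+)_A(\id_A)$ from the closure transformation $\Phi_+\Phi^+ : P_{\A} \to P_{\A}$ attached to the inclusion $\Phi = (\Phi^+,\Phi_+)$. It then remains to match the subtripos produced from $j$ by the inverse assignment $\op{\N\A} \to \SubTrip{P_{\A}}$ with the explicit inclusion in the statement. This is exactly the content of the corollary immediately preceding, together with \autoref{lem:nuclei are right adjoints}: the nucleus $j$ induces the geometric inclusion $P_{\A_j} \inc P_{\A}$ whose inverse image is postcomposition with $\id_A$ and whose direct image is postcomposition with $j$, where $P_{\A_j}$ is the arrow tripos of the twisted arrow algebra $\A_j$.

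Putting these together, $Q \inc P_{\A}$ is equivalent to the displayed inclusion $P_{\A_j} \inc P_{\A}$, and hence — back at the topos level — the original inclusion into $\AT\A$ is induced up to equivalence by it. In slogan form, the proof is simply the composite $\SubTop{\AT\A} \iso \SubTrip{P_{\A}} \iso \op{\N\A}$ made explicit.

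The step I expect to require the most care — rather than a genuine obstacle — is coherence: one must confirm that the subtripos recovered from $j$ through the \emph{abstract} equivalence of \autoref{prop:nuclei and subtriposes} agrees, up to equivalence, with the \emph{concrete} inclusion $P_{\A_j} \inc P_{\A}$. This is where the specialization of \autoref{standard inclusion} to arrow triposes enters. For $P = P_{\A}$ one has $\Sigma = A$, the local operator corresponds to $J = j$, and the generic $P_j$ of \autoref{standard inclusion} is literally $P_{\A_j}$: its redefined order $\phi \vdash^j_I \psi \iff \phi \vdash_I j\psi$ and implication $a \arr_j b = a \arr jb$ match the arrow-algebra construction $\A_j$ with separator $S_j = \Set{a | ja \in S}$. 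Once this identification is in place, the corollary follows formally from the two equivalences, with no further computation required.
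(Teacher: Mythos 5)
Your proposal is correct and follows essentially the same route as the paper, which obtains this corollary by chaining \autoref{thm:subtoposes and subtriposes}(2) with \autoref{prop:nuclei and subtriposes} and reading off the explicit form of the inclusion from the corollary following \autoref{lem:nuclei are right adjoints}. The coherence point you flag at the end is precisely what the paper settles in the remark immediately after the corollary, identifying $P_{\A_j}$ with the tripos $P_j$ of \autoref{standard inclusion}.
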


\begin{remark}
    By definition of $\A_j$, note therefore how $P_{\A_j}$ coincides precisely with the tripos $P_j$ described before \autoref{standard inclusion}. For this reason, we will usually refer to the subtripos $P_{\A_j} \inc P_{\A}$ simply as $P_j\inc P_{\A}$.
\end{remark}

We conclude this part with the following alternative description of $P_j$, already noted in the general case in \cite{van2008realizability} and then in the context of arrow algebras in \cite[Prop. 6.6]{berg2023arrow}. We record it here to have an explicit description of the corresponding geometric inclusion, which will come back in \autoref{sec:mod-real}.

\begin{proposition}\label{prop:alternative subtripos}
    Let $j\in \N \A$. Then, $P_j \in \SubTrip{P_{\A}}$ is equivalent to the subtripos $Q_j \inc P_{\A}$ defined by:
    \[ Q_j (I) \coloneqq \Set{ \alpha \in P_{\A}(I) | j \alpha \vdash_I \alpha }\]
    with the Heyting prealgebra structure induced by $P_{\A}(I)$.
\end{proposition}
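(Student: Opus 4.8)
The plan is to present both $P_j$ and $Q_j$ as subtriposes of $P_{\A}$ through the closure operation $\alpha \mapsto j\alpha$, and then to exhibit an equivalence between them compatible with the two inclusions. First I would make $Q_j \inc P_{\A}$ into an explicit geometric inclusion $\Psi$, by taking the direct image $(\Psi_+)_I : Q_j(I) \to P_{\A}(I)$ to be the inclusion of the $j$-closed predicates and the inverse image $(\Psi^+)_I : P_{\A}(I) \to Q_j(I)$ to be the reflection $\alpha \mapsto j\alpha$. That $j\alpha$ is $j$-closed follows from idempotency $jj\alpha \liso_I j\alpha$ of the nucleus; that $\Psi^+ \dashv \Psi_+$ follows from the chain of equivalences $j\alpha \vdash_I \beta \iff \alpha \vdash_I \beta$ for $j$-closed $\beta$ -- one direction using $\alpha \vdash_I j\alpha$, the other using monotonicity of $j$ together with $j\beta \liso_I \beta$. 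Since $\Psi_+$ is a full sub-preorder inclusion it reflects the order, so $\Psi$ is an inclusion, and $\Psi^+$ is cartesian because $j$ preserves finite meets up to isomorphism, its induced closure transformation being cartesian by \autoref{prop:nuclei and subtriposes}.

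Next I would define the comparison geometric morphism $\Theta : P_j \to Q_j$ with direct image $(\Theta_+)_I : \alpha \mapsto j\alpha$ and inverse image $(\Theta^+)_I : \beta \mapsto \beta$, where in the latter a $j$-closed element $\beta$ is regarded as living in the underlying set $A^I$ of $P_j(I)$. Naturality of both is immediate since $j$ acts pointwise and all reindexings are precomposition, and the adjunction $\Theta^+ \dashv \Theta_+$ holds on the nose since $\beta \vdash^j_I \alpha$ and $\beta \vdash_I j\alpha$ are by definition the same statement. I would then check the two round-trips: $(\Theta_+\Theta^+)_I(\beta) = j\beta \liso_I \beta$ for $j$-closed $\beta$, so $\Theta_+ \Theta^+ \iso \id_{Q_j}$; and $(\Theta^+\Theta_+)_I(\alpha) = j\alpha$, which is isomorphic to $\alpha$ in $P_j(I)$ because $\alpha \vdash^j_I j\alpha$ unfolds to the valid $\alpha \vdash_I jj\alpha$ and $j\alpha \vdash^j_I \alpha$ unfolds to the valid $j\alpha \vdash_I j\alpha$. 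Hence $\Theta$ is an equivalence.

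It then remains to check compatibility with the inclusions. Writing $\Phi : P_j \inc P_{\A}$ for the inclusion induced by $j$, whose inverse and direct images are $\id_A \circ -$ and $j \circ -$, I would verify $\Phi \iso \Psi \circ \Theta$ by computing direct images: since $\Psi_+$ is the inclusion, $(\Psi_+ \circ \Theta_+)_I(\alpha) = j\alpha = (\Phi_+)_I(\alpha)$ on the nose. Together with the fact that $\Theta$ is an equivalence, this shows that $P_j$ and $Q_j$ coincide as elements of $\SubTrip{P_{\A}}$.

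I expect the only delicate point to be the careful bookkeeping of the two orders: $Q_j(I)$ carries the restriction of $\vdash_I$, while $P_j(I)$ carries the redefined $\vdash^j_I$ on the \emph{same} underlying set $A^I$, so one must keep straight in which preorder each comparison is being made when unfolding the round-trips and the adjunctions. Once the nucleus identities $\alpha \vdash_I j\alpha$, $jj \liso j$, monotonicity of $j$, and preservation of finite meets are in hand, every step above is a one-line verification.
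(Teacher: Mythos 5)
Your proof is correct and follows essentially the same route as the paper: the equivalence is given by exactly the same pair of maps ($\beta \mapsto \beta$ from $Q_j$ to $P_j$ and $\alpha \mapsto j\alpha$ back), with the same round-trip computations unfolding $\vdash^j_I$ to $\vdash_I j(-)$. The only packaging difference is that you first equip $Q_j$ with its natural inclusion $\Psi$ (subset inclusion as direct image, $j \circ -$ as reflection) and then verify $\Phi \iso \Psi \circ \Theta$, whereas the paper obtains the inclusion of $Q_j$ into $P_{\A}$ by transporting $\Phi$ along the equivalence (yielding $j \circ -$ in both directions, isomorphic to your $\Psi$) — so compatibility is automatic there, and your version just makes explicit a check the paper leaves implicit.
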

\begin{proof}
Consider the pair of transformations:
\[\Theta^+ \coloneqq {\id_A} \circ - : Q_j \to P_j \qquad \Theta_+ \coloneqq j\circ- :P_j\to Q_j\]
obviously well-defined since so is the geometric morphism $P_j \inc P_{\A}$ above, and as $j j \vdash_A j $. Then, $\Theta^+$ and $\Theta_+$ define an equivalence of triposes between $P_j$ and $Q_j$.
\begin{itemize}
    \item[--] The fact that $\Theta^+\Theta_+ \iso \id_{P_j}$ is equivalent, by \autoref{prop:lex trans is induced by morphism}, to $j \liso^j_A \id_A$: on one hand, $\id_A \vdash^j_A j $ explicitly means $\id_A \vdash_A j j$, which follows from $\id_A \vdash_A j$; on the other, $j \vdash^j_A \id_A$ explicitly means $j \vdash_A j$, which follows by reflexivity.

    \item[--] To show that $\Theta_+\Theta^+ \iso \id_{Q_j}$ we need to show that $j \alpha \liso_I \alpha$ for every set $I$ and every $\alpha \in Q_j(I)$: on one hand, $\alpha \vdash_I j \alpha$ follows by $\id_A \vdash_A j$; on the other, $j \alpha \vdash_I \alpha$ follows by definition of $Q_j(I)$.
\end{itemize}
Therefore, $Q_j$ is equivalent to $P_j$; through the equivalence, the geometric inclusion of $Q_j$ in $P_{\A}$ is given by:
     \[\begin{tikzcd}
	{Q_j} & {P_{\A}}
	\arrow[""{name=0, anchor=center, inner sep=0}, "{j {\circ} {-} }"', curve={height=12pt}, from=1-2, to=1-1]
	\arrow[""{name=1, anchor=center, inner sep=0}, "{j {\circ} {-} }"', curve={height=12pt}, from=1-1, to=1-2]
	\arrow["\dashv"{anchor=center, rotate=-90}, draw=none, from=0, to=1]
\end{tikzcd} \]
\end{proof}

\subsection{A factorization theorem}As it is known, every geometric morphism of toposes can be factored as a geometric surjection followed by a geometric inclusion. Generalizing locale theory, let us recover the same result on the level of arrow algebras; in doing so, we will also make the correspondence between subtriposes and nuclei more explicit.

Let $f : \A\to \B$ be a computationally dense implicative morphism with right adjoint $h : \B \to \A$; by \autoref{lem:monotonicity}, up to isomorphism we can assume both $f$ and $h$ to be monotone with respect to the evidential order. First, observe the following.

\begin{lemma}
    $hf : \A \to \A$ is a nucleus on $\A$.
\end{lemma}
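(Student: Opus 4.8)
The plan is to set $j \coloneqq hf$ and verify the nucleus axioms directly, leaning on two facts already available: that $hf$ is a composite of implicative morphisms, and that $f \dashv h$ in $\ArrAlg$. Recall from the definition of a nucleus that it suffices to check (i) monotonicity, (ii) the inflationary law $\evmeet_{a} a \arr hf(a) \in S$, and---in place of (iii)---conditions (iv) and (vi). Since we have arranged that both $f$ and $h$ are monotone with respect to $\evleq$, the composite $j = hf$ is monotone, which is (i); and $j$ is an implicative morphism $\A \to \A$, being a composite of such.

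For (ii), observe that the inflationary law is literally the statement $\id_A \vdash hf$, that is $\evmeet_a a \arr hf(a) \in S$, which is the unit of the adjunction $f \dashv h$ and hence part of the hypothesis that $f$ is computationally dense.

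Condition (vi), namely $\evmeet_{a,b} j(a \arr b) \arr ja \arr jb \in S$, is exactly axiom (ii) in the definition of an implicative morphism applied to $j = hf$: a realizer $r \in S$ with $r \evleq hf(a\arr b) \arr hf(a) \arr hf(b)$ for all $a,a'$ exists, and upward closure of $S$ upgrades this to the required meet. For (iv), i.e. $\evmeet_a jja \arr ja \in S$, equivalently $hfhf \vdash_A hf$, I would start from the counit $fh \vdash_B \id_B$ of the adjunction. Applying the implicative morphism $h$, which preserves the logical order (condition (iii), or equivalently condition (3) of the alternative characterization in \autoref{lem:alt def of implicative morphism}), yields $hfh \vdash_B h$; precomposing with $f$---an operation which preserves $\vdash$, being the transition map $P_{\A}(f)$ of the arrow tripos---then gives $hfhf \vdash_A hf$, which is precisely (iv). Since (iii) can be replaced by the conjunction of (iv) and (vi), this completes the verification.

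I do not expect a genuine obstacle: once the adjunction (co)unit and the implicative-morphism axioms are read off in terms of the logical order, the argument is essentially bookkeeping. The only point that requires a moment's care is confirming that each manipulation---applying $h$ and precomposing with $f$---respects $\vdash$, but both are instances of order-preserving operations already isolated in the preceding sections, so no new work is needed there.
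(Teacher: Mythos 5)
Your proof is correct, but it organizes the verification differently from the paper. Conditions (i) and (ii) are handled identically in both (composition of monotone maps, and the unit $\id_A \vdash hf$ of the adjunction). The divergence is in the third axiom: the paper proves the original condition (iii), $\evmeet_{a,b}(a \arr hf(b)) \arr hf(a) \arr hf(b) \in S$, head-on, by rewriting it as $\pi_1 \arr hf\pi_2 \vdash_{A\times A} hf\pi_1 \arr hf\pi_2$ and then shuttling through the Heyting adjunction in $P_{\A}(A\times A)$ and the adjunction $f\circ - \dashv h\circ -$ (using that $f \circ -$ preserves finite meets), finishing with the counit $fh \vdash \id_B$ and axiom (ii) for $f$. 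You instead invoke the paper's remark that (iii) may be replaced by the conjunction of (iv) and (vi), and in that axiomatization everything comes nearly for free: (vi) is literally axiom (ii) of an implicative morphism applied to the composite $hf$, which is an implicative morphism since $\ArrAlg$ is a category, while (iv) follows from the counit $fh \vdash \id_B$ by applying $h$ (its order-preservation, axiom (iii)) and then reindexing along $f$ in $P_{\A}$, both of which preserve $\vdash$. Your route is more modular and avoids the Heyting-adjunction bookkeeping entirely; its cost is that it leans on the equivalence of the two axiomatizations of nuclei, which this paper asserts but does not prove (it is imported from the cited earlier work on arrow algebras), whereas the paper's computation is self-contained given only the definition. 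Since both arguments ultimately consume the same three ingredients --- compositionality of implicative morphisms, the unit, and the counit --- the difference is one of organization rather than of mathematical substance, and both are valid.
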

\begin{proof}    Let us verify that $hf$ satisfies the three conditions defining nuclei.
    \begin{enumerate}[label=\roman*.] 
    \item $hf$ is monotone by composition.
    \item Clearly $\id_A \vdash hf$ as $h$ is right adjoint to $f$.
    \item Let $I \coloneqq A \times A$; note that condition (iii) can be rewritten as:
    \[ \pi_1 \arr hf\pi_2 \vdash_I hf\pi_1 \arr hf \pi_2 \]
    where $\pi_1,\pi_2 : I \to A$ are the obvious projections. Through the Heyting adjunction in $P_{\A}(I)$, this is equivalent to:
    \[ (\pi_1 \arr hf\pi_2 )\land hf\pi_1 \vdash_I hf \pi_2 \]
    and hence, since $h\circ -$ is right adjoint to $f \circ -$, which preserves finite meets, to:
    \[ f(\pi_1 \arr hf\pi_2) \land fhf\pi_1 \vdash_I f \pi_2 \]
    Therefore, since $fh f\pi_1 \vdash_I f\pi_1$ as $fh \vdash \id_B$, it suffices to show:
    \[ f(\pi_1 \arr hf\pi_2) \land f\pi_1 \vdash_I f \pi_2 \]
    i.e., again through the Heyting adjunction in $P_{\B}(I)$:
    \[ f(\pi_1 \arr hf\pi_2) \vdash_I f\pi_1 \arr f\pi_2 \]
    which in turn follows since, by (ii) in \autoref{def:implicative morphism of aas}:
    \[f (\pi_1\arr hf \pi_2) \vdash_I f\pi_1 \arr fhf\pi_2 \]
    and again $fhf\pi_2 \vdash_I f\pi_2$.
\end{enumerate} 
\end{proof}

A natural question is then to relate $f$ to $j\coloneqq hf$, and in particular the geometric morphism $\Phi_f : P_{\B} \to P_{\A}$ induced by $f$ with the inclusion $\Phi_j : P_{j} \inc P_{\A}$ induced by $j$. To this aim, recall here that $f h \vdash \id_B$ and $\id_A \vdash hf$ imply $fhf \liso f$ and $hfh \liso h$.

\begin{lemma}
    $f$ is an implicative injection $\A_j \to \B$, with $h$ as a right adjoint.
\end{lemma}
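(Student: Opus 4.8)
The plan is to verify the three things the statement bundles together: that $f$ is an implicative morphism $\A_j \to \B$, that $h$ is an implicative morphism $\B \to \A_j$ serving as its right adjoint, and that this adjunction exhibits $f$ as an implicative injection. The key simplification I would exploit is that both $f : \A_j \to \B$ and $h : \B \to \A_j$ arise, up to isomorphism, by composing the original $f,h$ with the two implicative morphisms $\id_A : \A \to \A_j$ and $j : \A_j \to \A$ furnished by \autoref{lem:nuclei are right adjoints}. This way none of the conditions of \autoref{def:implicative morphism of aas} has to be re-checked by hand in $\A_j$.

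First I would show $h : \B \to \A_j$ is an implicative morphism. Since $h : \B \to \A$ is one and $\id_A : \A \to \A_j$ is one by \autoref{lem:nuclei are right adjoints}, their composite---which is again $h$ as a function---is an implicative morphism $\B \to \A_j$. Dually, for $f : \A_j \to \B$ I would use that $j : \A_j \to \A$ is an implicative morphism (again \autoref{lem:nuclei are right adjoints}) and that $f : \A \to \B$ is one, so that $f \circ j = hff$ read in the other order, namely $fhf : \A_j \to \B$, is implicative; since $fhf \liso f$ in $\B^A$ (recorded just before the statement), \autoref{cor:iso to an imp mor} upgrades this to $f : \A_j \to \B$ being an implicative morphism itself.

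Next I would establish the adjunction $f \dashv h$, now read between $\A_j$ and $\B$. The counit $fh \vdash_B \id_B$ is literally the original counit and needs nothing new. For the unit I must show $\id_A \vdash hf$ in $\ArrAlg(\A_j,\A_j)$, i.e.\ $\id_A \vdash^j_A hf$; here I would observe that $hf = j$ and that the original unit $\id_A \vdash_A hf$ (which is exactly $\id_A \vdash_A j$, the defining inequality of the nucleus) already holds in $\A$. Since $\vdash_A$ refines $\vdash^j_A$---any $\phi \vdash_A \psi$ gives $\phi \vdash_A j\psi$ because $\psi \vdash_A j\psi$---this immediately yields $\id_A \vdash^j_A hf$. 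Thus $f : \A_j \to \B$ is computationally dense with right adjoint $h$.

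Finally, to see $f$ is an implicative injection I would invoke the dual half of \autoref{prop:inclusions} applied to $f : \A_j \to \B$: it suffices to check $hf \liso \id$ in $\A_j$, i.e.\ $j \liso^j_A \id_A$. Both inequalities are elementary---$j \vdash^j_A \id_A$ unwinds to $j \vdash_A j$, and $\id_A \vdash^j_A j$ unwinds to $\id_A \vdash_A jj$, which follows from $\id_A \vdash_A j$ together with monotonicity of $j$ for the logical order. The only point demanding care, and the main (minor) obstacle, is keeping straight which order ($\vdash_A$ versus $\vdash^j_A$) and which separator ($S$ versus $S_j$) is in play at each step, since the underlying set $A$ is shared by $\A$ and $\A_j$; the composition shortcut is precisely what lets me avoid unwinding condition (ii) of \autoref{def:implicative morphism of aas} directly over $\arr_j$ and $S_j$, which would otherwise be the most tedious computation.
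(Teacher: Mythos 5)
Your proof is correct, and its central step takes a genuinely different route from the paper's. The shared skeleton is the same: show $f$ and $h$ are implicative morphisms between $\A_j$ and $\B$, establish the adjunction, then verify the extra inequality and invoke \autoref{prop:inclusions}. But where the paper proves that $f$ is an implicative morphism $\A_j \to \B$ by hand---condition (i) via $a \in S_j \Rightarrow hf(a) \in S_A \Rightarrow fhf(a) \in S_B \Rightarrow f(a) \in S_B$, and condition (ii) by intuitionistic reasoning over $\arr_j$ from $\evmeet_{a,a'} f(a\arr ja') \arr f(a) \arr fj(a') \in S_B$ together with $fhf \vdash f$---you instead observe that $f \circ j = fhf$ is implicative $\A_j \to \B$ as the composite of $j : \A_j \to \A$ (from \autoref{lem:nuclei are right adjoints}) with $f : \A \to \B$, and transfer along the recorded isomorphism $fhf \liso_A f$ via \autoref{cor:iso to an imp mor}. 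This is sound for exactly the reason you flag: the order comparing two functions $A \to B$ lives in $\B^A$ and depends only on $\B$ and the underlying set $A$, which $\A$ and $\A_j$ share, so $fhf \liso_A f$ is verbatim the hypothesis of the corollary with source $\A_j$. Your route buys a computation-free argument that never unwinds $\arr_j$ or $S_j$ for $f$; the paper's direct check is more self-contained and keeps the realizers explicit. The remaining steps coincide in both proofs: the counit is the original one, the unit reduces to $\id_A \vdash_A hfhf$ (your ``$\vdash_A$ refines $\vdash^j_A$'' observation and the paper's reduction are the same computation since $j = hf$), and the extra inequality $hf \vdash^j_A \id_A$ unwinds to the triviality $hf \vdash_A hf$. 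One point in your favor: you correctly invoke the \emph{dual} half of \autoref{prop:inclusions} to conclude that $f$ is an implicative \emph{injection}, in agreement with the statement; the paper's own proof ends by calling it an implicative ``surjection'', which is a slip, since $hf \liso^j_A \id_A$ is precisely the condition characterizing injections (equivalently, geometric surjections of triposes).
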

\begin{proof}
Let us start by showing that $f$ is an implicative morphism $\A_j \to \B$. 
\begin{enumerate}[label=\roman*.]
        \item Let $a \in S_j$; by definition, $hf(a) \in S_A$, so $fhf(a) \in S_B$, and hence $f(a) \in S_B$ since $fh \vdash \id_B$.
        \item Condition (ii) explicitly reads as:
        \[\evmeet_{a,a'} f(a\arr ja') \arr f(a) \arr f(a') \in S_B\]
        which follows by intuitionistic reasoning from:
        \begin{gather*}
            \evmeet_{a,a'} f(a\arr ja') \arr f(a) \arr fj(a') \in S_B \\
            \evmeet_{a,a'} (f(a) \arr fhf(a')) \arr f(a) \arr f(a') \in S_B
        \end{gather*}
        where the latter follows since $fhf \vdash f$.
        \end{enumerate}

Note now that $h : B \to A$ is an implicative morphism $\B \to \A_j$ since it is an implicative morphism $\B \to \A$ and $\id_A$ is an implicative morphism $\A \to \A_j$. Then, we have that $h : \B \to \A_j$ is right adjoint to $f: \A_j \to \B$:
\begin{itemize}
    \item[--]clearly $f h \vdash \id_B$;
    \item[--] on the other hand, $\id_A \vdash^j h f$ explicitly reads as $\id_A \vdash_A h f h f $, which follows from $\id_A \vdash_A h f$.
\end{itemize}

Moreover, we also have that $hf \vdash_A^j \id_A$ as it explicitly reads as $hf \vdash_A hf$, which makes $f : \A_j \to \B$ an implicative surjection by \autoref{prop:inclusions}.
\end{proof}

Recalling by \autoref{lem:nuclei are right adjoints} that $\id_A$ defines an implicative surjection $\A \to \A_j$, we have the following.

\begin{corollary}
    Every computationally dense implicative morphism factors as an implicative surjection followed by an implicative inclusion.
\end{corollary}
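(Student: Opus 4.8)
The plan is to read the factorization directly off the nucleus $j \coloneqq hf$ already constructed in this subsection, factoring $f$ through the arrow algebra $\A_j$. Writing $s$ for $\id_A$ regarded as a morphism $\A \to \A_j$ and $i$ for $f$ regarded as a morphism $\A_j \to \B$, I would show that $f = i \circ s$ is the desired factorization, with $s$ an implicative surjection and $i$ an implicative injection. Since all the substance has been isolated in the two preceding lemmas, the argument is essentially an assembly rather than a fresh computation.

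First I would recall the two ingredients. By \autoref{lem:nuclei are right adjoints}, $s = \id_A : \A \to \A_j$ is an implicative surjection whose right adjoint is $j$, and in particular it is computationally dense. By the lemma immediately preceding this corollary, $i = f : \A_j \to \B$ is an implicative injection whose right adjoint is $h : \B \to \A_j$, and in particular it too is computationally dense. Here the standing reduction of \autoref{lem:monotonicity}, which lets us take $f$ and $h$ monotone with respect to the evidential order, is precisely what guarantees that $j = hf$ is a nucleus, so that $\A_j$ and hence the two factors are even available.

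It then remains only to check that these two factors compose to $f$. Since composition in $\ArrAlg$ is composition of underlying functions and the underlying function of $s$ is the identity, the composite $i \circ s$ has underlying function $f \circ \id_A = f$; hence $i \circ s = f$ as morphisms $\A \to \B$. As $s$ and $i$ are both computationally dense and computational density is stable under composition, this factorization already lives in $\ArrAlgcd$ and exhibits $f$ as an implicative surjection followed by an implicative injection, that is, the implicative inclusion named in the statement. I expect no genuine obstacle beyond this bookkeeping, the real work having been the identification of $hf$ as a nucleus and of the two factors; one may additionally observe that the whole construction is the arrow-algebra shadow, under the correspondences of the previous sections, of the surjection--inclusion factorization of the induced geometric morphism $\Phi_f : P_{\B} \to P_{\A}$.
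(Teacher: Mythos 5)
Your proposal is correct and follows the paper's own route exactly: the corollary is precisely the assembly of \autoref{lem:nuclei are right adjoints} (giving the implicative surjection $\id_A : \A \to \A_j$ for $j = hf$) with the immediately preceding lemma (giving the implicative injection $f : \A_j \to \B$ with right adjoint $h$), under the standing monotonicity reduction of \autoref{lem:monotonicity} that makes $hf$ a nucleus. Your added observations -- that the composite's underlying function is literally $f$, that both factors are computationally dense by definition, and that ``inclusion'' in the statement means ``injection'' -- are accurate bookkeeping consistent with the paper.
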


On the level of triposes, this means that the geometric morphism $\Phi_f : P_{\B} \to P_{\A}$ induced by $f$ factors through $\Phi_j : P_j \inc P_{\A}$ by means of a geometric surjection $\Theta_f : P_{\B} \to P_j$, also induced by $f$ as a morphism $\A_j \to \B$:
\[\begin{tikzcd}
	{P_{\B}} \\
	&& {P_{\A}} \\
	{P_j}
	\arrow[""{name=0, anchor=center, inner sep=0}, "{f\circ-}"{description}, curve={height=12pt}, from=2-3, to=1-1]
	\arrow[""{name=1, anchor=center, inner sep=0}, "{h\circ-}"{description}, curve={height=12pt}, from=1-1, to=2-3]
	\arrow[""{name=2, anchor=center, inner sep=0}, "{{\id_A}\circ-}"{description}, curve={height=12pt}, from=2-3, to=3-1]
	\arrow[""{name=3, anchor=center, inner sep=0}, "{j\circ-}"{description}, curve={height=12pt}, from=3-1, to=2-3]
	\arrow["\Theta_f"', from=1-1, to=3-1]
	\arrow["\dashv"{anchor=center, rotate=-114}, draw=none, from=0, to=1]
	\arrow["\dashv"{anchor=center, rotate=-66}, draw=none, from=2, to=3]
\end{tikzcd}\]

\begin{proposition}
    $\Theta_f$ is an equivalence if and only if $\Phi_f$ is an inclusion.
\end{proposition}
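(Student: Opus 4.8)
The plan is to exploit the fact that $\Theta_f$ is \emph{already} a geometric surjection, so that being an equivalence reduces to being, in addition, an inclusion. Recall from the preceding lemma that $f$, viewed as a morphism $\A_j \to \B$ with right adjoint $h : \B \to \A_j$, is an implicative injection; consequently $\Theta_f = (f \circ -,\, h \circ -) : P_{\B} \to P_j$ is a geometric surjection. By \autoref{prop:equivalent triposes are induced by equivalences} together with \autoref{lem:equivalences}, $\Theta_f$ is an equivalence of triposes if and only if the inducing map $f : \A_j \to \B$ is an equivalence of arrow algebras, which by the remark following \autoref{prop:inclusions} holds precisely when $f : \A_j \to \B$ is both an implicative surjection and an implicative injection. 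Since the injection part is already in hand, I would reduce the claim to: \emph{$\Theta_f$ is an equivalence if and only if $f : \A_j \to \B$ is, moreover, an implicative surjection.}

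Next I would apply \autoref{prop:inclusions} twice, to the \emph{same} function $f$ but regarded with two different domains. Viewing $f$ as the morphism $\A_j \to \B$ with right adjoint $h : \B \to \A_j$, the equivalence $(2) \Leftrightarrow (3)$ there states that $f$ is an implicative surjection if and only if $f h \liso_B \id_B$. Viewing instead $f$ as the original morphism $\A \to \B$ with right adjoint $h : \B \to \A$, the equivalence $(1) \Leftrightarrow (2)$ states that $\Phi_f : P_{\B} \to P_{\A}$ is an inclusion if and only if $f h \liso_B \id_B$. Chaining these, the result follows once we know the two occurrences of the condition $fh \liso_B \id_B$ are one and the same.

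The crucial observation — and essentially the only point needing care — is exactly this: the condition $fh \liso_B \id_B$ appearing in both applications is literally the same statement, as it only concerns the composite endomorphism $fh : B \to B$ together with the logical order $\vdash_B$ of $\B$, and makes no reference to the domain of $f$ nor to the modified implication $\arr_j$ and separator $S_j$ of $\A_j$ (which differ from those of $\A$). Hence the two characterizations coincide, and combining the equivalences above yields that $\Theta_f$ is an equivalence $\iff fh \liso_B \id_B \iff \Phi_f$ is an inclusion, as desired. I expect no genuine obstacle beyond keeping scrupulous track of which preorder-enriched category each instance of \autoref{prop:inclusions} is being invoked in; the remainder is bookkeeping.
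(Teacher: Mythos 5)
Your proof is correct and follows essentially the same route as the paper's: both reduce, via \autoref{lem:equivalences} and \autoref{prop:equivalent triposes are induced by equivalences}, to the statement that $f : \A_j \to \B$ is an equivalence of arrow algebras, observe that one half of that condition (your ``implicative injection'' half, the paper's $hf \vdash^j_A \id_A$, which are the same fact via \autoref{prop:inclusions}) holds automatically, and identify the remaining half with the condition $fh \liso_B \id_B$, which characterizes $\Phi_f$ being an inclusion. Your explicit remark that this last condition is independent of whether $f$ is viewed with domain $\A$ or $\A_j$ is exactly the point the paper's proof uses implicitly, so the two arguments coincide in substance.
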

\begin{proof}
By \autoref{lem:equivalences} and \autoref{prop:equivalent triposes are induced by equivalences}, $\Theta_f$ is an equivalence if and only if $f : \A_j \to \B$ is an equivalence. Since $h : \B \to \A_j$ is right adjoint to $f: \A_j \to \B$, this is equivalent to $hf \vdash^j_A \id_A$ and $\id_B \vdash fh$, and since $hf \vdash^j_A \id_A$ holds trivially this is equivalent simply to $\id_B \vdash fh$. By \autoref{prop:inclusions}, $\id_B \vdash fh$ is in turn equivalent to $\Phi_f$ being an inclusion.
\end{proof}

\begin{remark}\label{rem:nucleus induced by surjection}
    In essence, this gives us a more explicit description of the correspondence given in \autoref{prop:nuclei and subtriposes}, in perfect generalization of the localic case: indeed, if a subtripos $\Phi : P_{\B} \inc P_{\A}$ is induced by an implicative surjection $f : \A \to \B$, then it is equivalent to the subtripos induced by (a nucleus isomorphic to) $hf$, where $h$ is right adjoint to $f$. 
\end{remark}

\section{Arrow algebras for modified realizability}\label{sec:mod-real}
In this section, we will apply the theoretical framework developed above to lift the study of \emph{modified realizability} to the level of arrow algebras. Modified realizability is a variant of Kleene's number realizability introduced by Kreisel in 1959 \cite{Kreisel1959-KREIOA}. On the semantical side, a topos for Kreisel's modified realizability was first defined by Greyson in 1981 \cite{graymod}; see also \cite{vanoosten1997, Birkedal2002-BIRRAM-2, johnstone2017}. 

The key feature of modified realizability lies in separating between a set of \emph{potential realizers} and a subset thereof of \emph{actual realizers}. On the level of triposes, this amounts to shifting from the ordinary realizability tripos $P_{\AA}$ over a (traditionally, discrete and absolute) PCA $\AA$ to a tripos whose predicates on a set $I$ are functions from $I$ to the set:
\[\Set{ (\alpha,\beta) \in D A \times D A | \alpha \subseteq \beta }\]
which are preordered by:
\[ \phi \vdash_I \psi \iff \bigcap_{i\in I} (\phi_1(i) \arr \psi_1(i)) \cap (\phi_2(i) \arr \psi_2(i)) \in (DA)^{\#}  \]
where we denote with $\phi_1(i),\phi_2(i)$ the two components of $\phi(i)$. This idea is what led, in \cite{berg2023arrow}, to the definition of the \emph{Sierpiński construction} on arrow algebras, which we will describe below. To do this, and for what follows, we will consider some \emph{ad hoc} notions of arrow algebras which still encompass all the relevant cases and many others. This does not mean that we have counterexamples showing how the presented results may fail for more general classes of arrow algebras, but only that minimal assumptions suffice to ensure the desired properties.

\begin{definition}
    An arrow algebra $\A = (A,\evleq,\arr,S)$ is \emph{binary implicative} if the equality:
\[\textstyle a \arr (b \evmeet c) = a \arr b \evmeet a \arr c\]
holds for all $a,b,c\in A$, and it is \emph{modifiable} if moreover the equality:
\[\bot \arr a = \top\]
holds for every $a \in A$.

We denote with $\ArrAlgbi$ and $\ArrAlgmod$ the full subcategories of $\ArrAlg$ on binary implicative and modifiable arrow algebras, respectively.
\end{definition}

\begin{example}
    Every frame, seen as an arrow algebra in the canonical way, is modifiable.
\end{example}
\begin{example}
    For every PCA $\mathbb{A}$, $D \AA$ is modifiable; in particular, $\operatorname{PER} \AA$ (cfr. \cite[Thm. 3.10]{berg2023arrow}) is modifiable.
\end{example}

\subsection{The Sierpiński construction}\label{sec:sierpinski}

Recall by \cite[Prop. 7.2]{berg2023arrow} that, starting from any binary implicative arrow algebra $\A = (A, \evleq, \arr, S)$, we can define a new arrow algebra $\s\A = (\s A, \evleq, \arr, \s S)$, also binary implicative, by letting:
\[\s A \coloneqq \Set{ x = (x_0,x_1) \in A \times A | x_0 \evleq x_1}\]
with pointwise order, implication:
\[\textstyle x \arr y \coloneqq ( x_0 \arr y_0 \evmeet x_1 \arr y_1 , x_1 \arr y_1) \]
and separator:
\[\s S \coloneqq \Set{ x \in \s A | x_0 \in S}\]

\begin{remark}
    This means that, for every set $I$, the order in $P_{\s\A}(I)$ is given by:
    \[\phi \vdash_I \psi \iff \evmeet_{i\in I} \textstyle \phi_1(i) \arr \psi_1(i) \evmeet \phi_2(i) \arr \phi_2(i) \in S \]
    where we denote with $\phi_1,\phi_2 : I \to A$ the two components of $\phi : I \to \s A$. 
\end{remark}

Let us now lift the association $\A \mapsto \s\A$ to a (pseudo)functor on $\ArrAlgbi$.

Let $f : \A \to \B$ be an implicative morphism in $\ArrAlgbi$, for the moment assumed to be monotone, and define:
\[\s f : \s A \to \s B \qquad \s f (x_0,x_1) \coloneqq (f(x_0), f(x_1) ) \]

\begin{lemma}
    $\s f$ is an implicative morphism $\s\A \to \s\B$.
\end{lemma}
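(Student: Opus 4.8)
The plan is to verify the three defining conditions of \autoref{def:implicative morphism of aas} directly, taking advantage of the fact that $\s f$ is monotone. First I would record that $\s f$ is well defined: since $f$ is monotone for the evidential order, $x_0 \evleq x_1$ forces $f(x_0) \evleq f(x_1)$, so $\s f(x) = (f(x_0), f(x_1))$ indeed lands in $\s B$; and because the order on $\s B$ is pointwise, $\s f$ is itself monotone. By \autoref{rem:monotone morphism} it then suffices to check conditions (i) and (ii), condition (iii) being automatic for monotone functions. Condition (i) is immediate: if $x \in \s S$ then $x_0 \in S_A$, hence $f(x_0) \in S_B$ by condition (i) for $f$, which is precisely $\s f(x) \in \s S$.

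The real content is condition (ii). Let $r \in S_B$ realize $f$, so that $r \evleq f(a \arr a') \arr f(a) \arr f(a')$ for all $a, a' \in A$; I claim the diagonal element $(r, r) \in \s S$ realizes $\s f$. To see this I would expand $\s f(x \arr y) \arr \s f(x) \arr \s f(y)$ using the implication formula of $\s B$: its bottom component is $f(x_1 \arr y_1) \arr f(x_1) \arr f(y_1)$, which $r$ dominates directly by its realizer property; its top component is the meet of this same expression with $f(x_0 \arr y_0 \evmeet x_1 \arr y_1) \arr \bigl(f(x_0) \arr f(y_0) \evmeet f(x_1) \arr f(y_1)\bigr)$.

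The single step that uses a genuine hypothesis is handling this latter factor, and here I would invoke binary implicativity of $\B$ to rewrite $c \arr (d \evmeet e)$ as $(c \arr d) \evmeet (c \arr e)$, reducing it to the meet of $f(x_0 \arr y_0 \evmeet x_1 \arr y_1) \arr f(x_0) \arr f(y_0)$ and $f(x_0 \arr y_0 \evmeet x_1 \arr y_1) \arr f(x_1) \arr f(y_1)$. Since $x_0 \arr y_0 \evmeet x_1 \arr y_1$ lies below both $x_0 \arr y_0$ and $x_1 \arr y_1$, monotonicity of $f$ together with antitonicity of $\arr$ in its first argument shows that $r$ (which realizes $f(x_0 \arr y_0) \arr f(x_0) \arr f(y_0)$ and $f(x_1 \arr y_1) \arr f(x_1) \arr f(y_1)$) lies below each of these two factors; as $\evleq$ is the semilattice order, $r$ then lies below their meet, hence below the whole top component. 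This gives $(r,r) \evleq \s f(x\arr y) \arr \s f(x) \arr \s f(y)$ in $\s B$, establishing (ii).

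The main obstacle is precisely this top-component bookkeeping: the asymmetric implication formula of the Sierpiński construction makes the first coordinate mix both components of $x$ and $y$, and without binary implicativity of $\B$ one could only bound $c \arr (d \evmeet e)$ \emph{from above} by $(c \arr d) \evmeet (c \arr e)$, which is the wrong direction for exhibiting a realizer lying \emph{below} it. This is exactly why the construction is confined to $\ArrAlgbi$, and it is the only place where the binary implicativity of the codomain is actually needed.
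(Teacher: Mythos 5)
Your proof is correct and follows essentially the same route as the paper's: after noting well-definedness and monotonicity of $\s f$ (so that condition (iii) is automatic by \autoref{rem:monotone morphism}), you verify (i) directly and establish (ii) by expanding the implication in $\s \B$, using binary implicativity of $\B$ to split the first component into factors, and then using monotonicity of $f$ together with antitonicity of $\arr$ in its first argument to reduce everything to the realizer $r$ of $f$, so that $(r,r) \in \s S_B$ realizes $\s f$. The paper's proof is the same argument, phrased slightly more compactly.
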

\begin{proof}
First, note that $\s f$ is well-defined as a function $\s A \to \s B$ by monotonicity of $f$, and it is monotone itself with respect to the evidential orders in $\s \A$ and $\s \B$. Let us then verify that $\s f$ satisfies the first two conditions in \autoref{def:implicative morphism of aas}.
        \begin{enumerate}[label=\roman*.]
\item If $x \in \s S_A$, then $x_0 \in S_A$, so $f(x_0) \in S_B$ and hence $\s{f} (x) \in \s S_B$.

\item First note that, for all $x,y \in \s A$:
    \begin{align*} 
        \s{f} ( x \arr y ) &  \textstyle = \s{f} ( x_0 \arr y_0 \evmeet x_1 \arr y_1 , x_1 \arr y_1 ) \\
        &  \textstyle = ( f(x_0\arr y_0 \evmeet x_1\arr y_1) , f(x_1 \arr y_1) ) 
    \end{align*}
    whereas:
    \begin{align*} 
       \s{ f} (x) \arr \s{f} (y)& = (fx_0, fx_1) \arr (fy_0, fy_1) \\
        & = \textstyle ( fx_0 \arr fy_0 \evmeet fx_1 \arr fy_1, fx_1 \arr fy_1 ) 
    \end{align*}

    Therefore, by binary implicativity, a realizer for $\s f$ amounts to an element $r \in S_B$ such that:
    \begin{align*}
        r & \evleq   \textstyle f(x_0\arr y_0 \evmeet x_1 \arr y_1) \arr fx_0\arr fy_0  \\  
        r & \evleq   \textstyle f(x_0\arr y_0 \evmeet x_1 \arr y_1) \arr  fx_1\arr fy_1 \\
       r & \evleq  f(x_1\arr y_1) \arr fx_1 \arr fy_1
    \end{align*}
    for all $x,y \in \s A$, in which case $(r,r) \in \s S_B$ realizes $\s f$. By monotonicity of $f$, note then that it suffices to show that:
    \begin{align*}
        r & \evleq   f(x_0\arr y_0) \arr fx_0\arr fy_0  \\  
       r & \evleq  f(x_1\arr y_1) \arr fx_1 \arr fy_1
    \end{align*}
    for all $x,y \in \s A$, which means that $r$ can be taken to be a realizer for $f$.
        \end{enumerate}
\end{proof}

Therefore, $\s{(-)}$ defines a functorial association on binary implicative arrow algebras and monotone implicative morphisms between them. Note moreover that, given two monotone implicative morphisms $f,f' : \A \to \B$ in $\ArrAlgbi$, if $u \in S_B$ realizes $f \vdash f'$, then $(u,u) \in \s S_B$ clearly realizes $\s f \vdash \s {f'}$, meaning that $\s{(-)}$ is actually 2-functorial. Precomposing with the pseudofunctor $M$ of \autoref{rem:monotonization}, we obtain the following.

\begin{proposition}\label{prop:sierpinski}
    For every implicative morphism $f : \A \to \B$ in $\ArrAlgbi$, let:
    \[\s f : \s \A \to \s \B \qquad \s f (x_0,x_1) \coloneqq \left( \evmeet_{x_0\evleq a} \partial f (a) , \evmeet_{x_1 \evleq a} \partial f(a) \right) \]
    
    Then, $\s{(-)}$ is a pseudofunctor $\ArrAlgbi\to \ArrAlgbi$.

    Moreover, if $f$ is computationally dense with right adjoint $h : \B \to \A$, then $\s f$ is computationally dense as well, and a right adjoint is given by $\s h : \s \B \to \s \A$.
\end{proposition}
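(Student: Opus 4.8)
The plan is to reduce both claims to the behaviour of $\s{(-)}$ on \emph{monotone} implicative morphisms, which is already available from the preceding lemma and the discussion surrounding it, by observing that the formula defining $\s f$ is nothing but $\s{(Mf)}$, where $M : \ArrAlgbi \to \ArrAlgbi$ is the monotonization pseudofunctor of \autoref{rem:monotonization}. Indeed, since $Mf(a) = \evmeet_{a \evleq a'} \partial f(a')$, unwinding the definitions gives $\s{(Mf)}(x_0,x_1) = (Mf(x_0), Mf(x_1)) = \left(\evmeet_{x_0 \evleq a} \partial f(a), \evmeet_{x_1 \evleq a} \partial f(a)\right)$, which is exactly $\s f(x_0,x_1)$. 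This single identity is the conceptual hinge of the whole argument.

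For the first assertion I would first record that, on the wide subcategory of $\ArrAlgbi$ consisting of monotone implicative morphisms, the assignment $g \mapsto \s g$ with $\s g(x_0,x_1) = (g(x_0), g(x_1))$ is a \emph{strict} $2$-functor: it preserves identities and composites on the nose and is order-preserving, as already noted before the statement. Since $M$ is a pseudofunctor whose image consists of monotone morphisms (every $Mf$ is monotone) and which fixes objects, the composite of this strict $2$-functor after $M$ is again a pseudofunctor; but by the identity above this composite is precisely $\s{(-)}$. Concretely, $\s{(M\id)} \liso \s{\id} = \id$ and $\s{(M(gf))} \liso \s{(Mg \circ Mf)} = \s{(Mg)} \circ \s{(Mf)}$, using that $M$ is pseudofunctorial and that $\s{(-)}$ preserves $\liso$ on monotone maps. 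Hence $\s{(-)} : \ArrAlgbi \to \ArrAlgbi$ is a pseudofunctor.

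For the second assertion, suppose $f \dashv h$ in $\ArrAlg$. As $Mf \liso f$ and $Mh \liso h$, and adjunctions are preserved under isomorphism in a preorder-enriched setting, we get $Mf \dashv Mh$; replacing $f,h$ by $Mf,Mh$ does not change $\s f, \s h$, so I may assume $f$ and $h$ are monotone with $f \dashv h$. Then $\s f(x_0,x_1) = (f(x_0), f(x_1))$ and $\s h(y_0,y_1) = (h(y_0), h(y_1))$, and a direct computation gives $\s f\, \s h(y) = (fh(y_0), fh(y_1))$ and $\s h\, \s f(x) = (hf(x_0), hf(x_1))$, both landing in the correct Sierpiński algebra by monotonicity. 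It then remains to supply realizers. Taking $s \in S_B$ realizing $fh \vdash \id_B$, so that $s \evleq fh(b) \arr b$ for all $b \in B$, and recalling that the implication in $\s\B$ has first component $u_0 \arr v_0 \evmeet u_1 \arr v_1$, the diagonal $(s,s) \in \s S_B$ satisfies $(s,s) \evleq \s f\, \s h(y) \arr y$ for every $y \in \s B$, giving $\s f\, \s h \vdash \id_{\s B}$. Dually, a realizer $t \in S_A$ of $\id_A \vdash hf$ yields $(t,t) \in \s S_A$ realizing $\id_{\s A} \vdash \s h\, \s f$. Since $\s h$ is an implicative morphism by the first part, this shows $\s h$ is right adjoint to $\s f$ and hence $\s f$ is computationally dense.

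I expect the only delicate point to be notational rather than substantive: the evidential meet in the first component of the Sierpiński implication forces a realizer to dominate \emph{both} coordinatewise inequalities at once, which is precisely why the diagonal realizers $(s,s)$ and $(t,t)$ suffice and why no separate treatment of the two components is needed. Everything else is bookkeeping enabled by the identification $\s f = \s{(Mf)}$, so no genuine obstacle arises once that reduction to the monotone case is in place.
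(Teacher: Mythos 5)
Your proposal is correct and follows essentially the same route as the paper: identify $\s f$ with $\s{(Mf)}$, get pseudofunctoriality by composing the strict $2$-functor on monotone morphisms with the monotonization pseudofunctor $M$, and then prove the adjunction after reducing to the monotone case. The only cosmetic difference is that you exhibit the diagonal realizers $(s,s)$ and $(t,t)$ explicitly, whereas the paper rewrites the meets $\evmeet_{y} \s f \s h (y) \arr y$ and $\evmeet_{x} x \arr \s h \s f(x)$ and observes they reduce componentwise to $fh \vdash \id_{\B}$ and $\id_{\A} \vdash hf$ — two formulations that are equivalent by upward closure of the separator.
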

\begin{proof}
We only have to show the last part, so let $f$ be computationally dense with right adjoint $h : \B \to \A$ and, up to isomorphism, assume both $f$ and $h$ to be monotone, so that $\s f$ and $\s h$ can be defined as above in the case of monotonicity. Let us show that $\s h$ is right adjoint to $\s f$.
\begin{itemize}
\item[--] To show that $\s f \s h \vdash \id_{\s B}$, note that:
\begin{align*}
    \MoveEqLeft[3] \evmeet_{y\in \s B} \s f \s h (y) \arr y \in \s S_B \\
    \iff{} & \evmeet_{y\in \s B} ( f h (y_0), fh (y_1) ) \arr (y_0,y_1) \in \s S_B \\
    \iff{} & \evmeet_{y\in \s B}  \textstyle f h (y_0) \arr y_0 \evmeet f h (y_1) \arr y_1 \in S_B
\end{align*}
which is ensured by $f h \vdash \id_B$.

\item[--] Similarly, $\id_{\s A} \vdash \s h \s f$ reduces to $\id_A \vdash h f$.

\end{itemize}
\end{proof}

\begin{corollary}\label{cor:functoriality of sierpinski}
    Let $\A$ and $\B$ be binary implicative arrow algebras. 
    
Every geometric morphism $\Phi : P_{\B} \to P_{\A}$ lifts to a geometric morphism ${\s \Phi : P_{\s \B} \to P_{\s\A}}$. 
\end{corollary}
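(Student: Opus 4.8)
The plan is to reduce the statement to \autoref{prop:sierpinski} by passing through the correspondence between geometric morphisms of arrow triposes and computationally dense implicative morphisms established in \autoref{prop:geo is induced by cd morphism}. The whole content of the corollary is really the preservation of computational density under the Sierpiński construction, which has already been secured in that proposition; what remains is a formal assembly of the relevant equivalences.

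First I would invoke \autoref{prop:geo is induced by cd morphism}: since $\Phi : P_{\B} \to P_{\A}$ is a geometric morphism, it is induced up to isomorphism by an essentially unique computationally dense implicative morphism $f : \A \to \B$, which by \autoref{def:cd morphism of aas} admits a right adjoint $h : \B \to \A$ in $\ArrAlg$; concretely, $\Phi^+ = f \circ -$ and $\Phi_+ = h \circ -$. Because $\A$ and $\B$ are binary implicative by hypothesis, the pseudofunctor $\s{(-)} : \ArrAlgbi \to \ArrAlgbi$ of \autoref{prop:sierpinski} applies to $f$, and by the last part of that proposition $\s f : \s\A \to \s\B$ is again computationally dense, with right adjoint $\s h : \s\B \to \s\A$. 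Applying \autoref{thm:induced geometric morphism} to $\s f$ then produces the desired geometric morphism $\s\Phi \coloneqq \Phi_{\s f} : P_{\s\B} \to P_{\s\A}$, whose inverse and direct images are respectively $\s f \circ -$ and $\s h \circ -$.

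Since the substantive work is packaged in \autoref{prop:sierpinski}, there is no genuine obstacle to overcome here. The only point that warrants a moment's care is the bookkeeping of variance: a geometric morphism $P_{\B} \to P_{\A}$ corresponds to an implicative morphism $\A \to \B$ pointing in the \emph{opposite} direction, and likewise the lifted $\s\Phi : P_{\s\B} \to P_{\s\A}$ corresponds to $\s f : \s\A \to \s\B$, so one must check that the constructed morphism points the right way. One should also note in passing that $\s\A$ and $\s\B$ are themselves binary implicative (as recorded when the construction was introduced), so that the triposes $P_{\s\A}$ and $P_{\s\B}$ appearing in the statement are legitimately defined arrow triposes.
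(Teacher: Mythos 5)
Your proposal is correct and is exactly the argument the paper intends: the corollary is stated as an immediate consequence of \autoref{prop:sierpinski}, obtained by translating $\Phi$ into a computationally dense implicative morphism $f : \A \to \B$ via \autoref{prop:geo is induced by cd morphism}, applying $\s{(-)}$, and passing back to triposes via \autoref{thm:induced geometric morphism}. Your variance bookkeeping (geometric morphisms $P_{\B} \to P_{\A}$ versus implicative morphisms $\A \to \B$) and the observation that $\s\A$, $\s\B$ remain binary implicative are both handled correctly.
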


\begin{question}
    For $\A = \Pow {\mathcal{K}_1}$, we have that $\AT {\s \A}$ is the effective topos built on the topos of sheaves over the Sierpiński space, $\Eff_{\cdot \arr\cdot}$ -- that is, the result of the construction of $\Eff$ inside the topos $\Sets^{\cdot \arr\cdot}$.

    Can we develop a theory of arrow algebras over other base toposes, encompassing that of PCAs over other base toposes, so that the same result holds for every (binary implicative) arrow algebra?
\end{question}

\subsection{The modification of an arrow algebra}

Let us now study the relation between $P_{\A}$ and $P_{\s\A}$. First, generalizing what is shown in \cite{johnstone2013} for discrete and absolute PCAs, we can note the following.

\begin{lemma}
    $P_{\A}$ is a subtripos of $P_{\s\A}$.
\end{lemma}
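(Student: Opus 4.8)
The plan is to exhibit $P_{\A}$ as a subtripos of $P_{\s\A}$ by producing an implicative surjection $\s\A\to\A$: by \autoref{prop:inclusions} together with \autoref{thm:induced geometric morphism}, such a morphism induces a geometric inclusion $P_{\A}\inc P_{\s\A}$. The natural candidate is the projection onto potential realizers $f:\s A\to A$, $f(x_0,x_1)\coloneqq x_1$, paired with the diagonal $h:A\to\s A$, $h(a)\coloneqq(a,a)$ — well-defined since $a\evleq a$ — as candidate right adjoint. Both are monotone for the pointwise evidential orders, so by \autoref{rem:monotone morphism} it suffices to verify conditions (i) and (ii) of \autoref{def:implicative morphism of aas}.

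First I would check that $f$ and $h$ are implicative morphisms. Condition (i) for $f$ follows from upward closure of $S$, since $x_0\in S$ and $x_0\evleq x_1$ force $x_1\in S$; for $h$ it is immediate from the definition of $\s S$. The decisive simplification is that both maps preserve implication on the nose: reading off the second component of the $\s\A$-implication yields $f(x\arr y)=x_1\arr y_1=f(x)\arr f(y)$, while a direct computation gives $h(a\arr a')=(a\arr a',a\arr a')=h(a)\arr h(a')$. Hence condition (ii) holds in each case with the identity combinator $\mathbf{i}\in S$ (satisfying $\mathbf{i}\evleq a\arr a$ for all $a$, by intuitionistic reasoning) as realizer, respectively $(\mathbf{i},\mathbf{i})\in\s S$.

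Finally I would establish $f\dashv h$. On one side $fh=\id_A$ on the nose, so $fh\liso_A\id_A$. On the other, $hf(x_0,x_1)=(x_1,x_1)$, and $\id_{\s A}\vdash hf$ reduces to $\evmeet_{x\in\s A}\bigl((x_0\arr x_1)\evmeet(x_1\arr x_1)\bigr)\in S$. This is the single point where the defining constraint $x_0\evleq x_1$ of $\s A$ is used: monotonicity of $\arr$ in its second argument gives $\mathbf{i}\evleq x_0\arr x_0\evleq x_0\arr x_1$, and $\mathbf{i}\evleq x_1\arr x_1$ always, so $\mathbf{i}$ realizes the required inequality. Thus $h$ is right adjoint to $f$, making $f$ computationally dense (\autoref{def:cd morphism of aas}); and since $fh\liso_A\id_A$, \autoref{prop:inclusions} shows $f$ is an implicative surjection, whence the induced geometric morphism $P_{\A}\to P_{\s\A}$ is an inclusion and $P_{\A}\in\SubTrip{P_{\s\A}}$. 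No step is a genuine obstacle; the only care needed is fixing the direction of the adjunction correctly and keeping track of the $\s S$-bookkeeping in the final computation.
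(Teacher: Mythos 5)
Your proof is correct and follows essentially the same route as the paper: the paper likewise takes the projection $(x_0,x_1)\mapsto x_1$ as an implicative morphism $\s\A\to\A$ with the diagonal $a\mapsto(a,a)$ as right adjoint, and concludes via \autoref{prop:inclusions} that the induced geometric morphism $P_{\A}\to P_{\s\A}$ is an inclusion. The only cosmetic difference is in the final adjunction check, where the paper collapses $x_0\arr x_1\evmeet x_1\arr x_1$ to $x_1\arr x_1$ by antitonicity of $\arr$ in its first argument, whereas you realize both conjuncts separately by $\mathbf{i}$; both arguments are sound.
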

\begin{proof}
Consider the projection:
        \[\pi_1 : \s A \to A \qquad (x_0,x_1) \mapsto x_1\]
        Let us show that $\pi_1$, which is obviously monotone, is an implicative morphism $\s\A \to \A$.
        \begin{enumerate}[label=\roman*.]
            \item If $(x_0,x_1) \in \s S$, then by definition $x_0 \in S$, so $x_1 \in S$ as well since $x_0 \evleq x_1$.
            \item A realizer of $\pi_1$ amounts to an element $r \in S$ such that:
            \[ r \evleq (x_1 \arr y_1) \arr x_1 \arr y_1\]
            for all $x,y \in \s A$, so we can take $r \coloneqq \bf{i}$.
        \end{enumerate}

        Consider now the diagonal map:
        \[\delta : A \to \s A \qquad a \mapsto (a,a)\]
        Let us show that $\delta$, also obviously monotone, is an implicative morphism $\A \to \s \A$.
        \begin{enumerate}[label=\roman*.]
            \item If $a \in S$, then clearly $(a,a) \in S$.
            \item We have:
    \begin{align*}
\MoveEqLeft[3]    \evmeet_{a,a'\in  A } \delta (a\arr a') \arr \delta(a) \arr \delta (a')  \in \s S \\
\iff{} & \evmeet_{a,a'\in  A }  (a\arr a', a \arr a') \arr (a,a) \arr (a',a')  \in \s S  \\
\iff{} & \evmeet_{a,a' \in A} (a\arr a') \arr a \arr a' \in S
\end{align*}
which is ensured by $\bf{i} \in S$.
        \end{enumerate}

        Finally, let us show that $\delta$ is right adjoint to $\pi_1$ in $\ArrAlg$, making it an implicative surjection.
\begin{itemize}
    \item[--]On one hand, $\pi_1 \delta = \id_A$.
    \item[--]On the other, we have:
    \begin{align*}
\MoveEqLeft[3]    \id_{\s A} \vdash_{\s A} \delta \pi_0 \\
\iff{} & \evmeet_{x\in \s A} (x_0,x_1) \arr ( x_1, x_1 ) \in \s S \\
\iff{} & \evmeet_{x\in \s A} x_1 \arr x_1 \in S 
\end{align*}
which is ensured by $\bf{i}\in S$.
\end{itemize}
        
        Therefore, $\pi_1$ induces a geometric inclusion $\Phi_{1} : P_{\A} \inc P_{\s\A}$.
\end{proof}

\begin{corollary}
    $\AT \A$ is a subtopos of $\AT {\s\A}$.
\end{corollary}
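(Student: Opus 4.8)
The plan is to transport the geometric inclusion of triposes established in the previous lemma through the tripos-to-topos construction. The previous lemma produced a geometric inclusion $\Phi_1 : P_{\A} \inc P_{\s\A}$, arising from the implicative surjection $\pi_1 : \s\A \to \A$ together with its right adjoint $\delta$; recall moreover that, by definition of the arrow topos, $\AT\A = \Sets[P_{\A}]$ and $\AT{\s\A} = \Sets[P_{\s\A}]$. So the only thing left to do is to pass from the inclusion of triposes to an inclusion of the associated toposes.

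I would then simply invoke the general fact recalled in Section~\ref{sec:triposes}, namely that every geometric inclusion of triposes $Q \inc P$ induces a geometric inclusion of toposes $\Sets[Q] \inc \Sets[P]$. Applying this to $\Phi_1 : P_{\A} \inc P_{\s\A}$ yields at once the desired geometric inclusion $\AT\A \inc \AT{\s\A}$. There is no genuine obstacle at this stage: all the content was already absorbed into the verification, carried out in the previous lemma, that $\pi_1$ is a computationally dense implicative morphism which is moreover an implicative surjection. Via \autoref{prop:inclusions} and \autoref{prop:geo is induced by cd morphism}, this packages $\Phi_1$ as a bona fide geometric inclusion of triposes, and the corollary is then a one-step application of the tripos-to-topos transport result. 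Thus the proof is essentially a matter of unwinding definitions and citing the appropriate preservation statement.
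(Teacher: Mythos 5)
Your proposal is correct and matches the paper's (implicit) argument exactly: the corollary is an immediate consequence of the lemma's geometric inclusion $\Phi_1 : P_{\A} \inc P_{\s\A}$ together with the proposition from \autoref{sec:triposes} that every geometric inclusion of triposes induces a geometric inclusion of the associated toposes, since $\AT\A = \Sets[P_{\A}]$ and $\AT{\s\A} = \Sets[P_{\s\A}]$ by definition. This is precisely why the paper states it as a corollary with no separate proof.
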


\begin{question}
In the case of $\A = \Pow {\mathbb{P}}$ for a discrete and absolute PCA, Johnstone \cite[Lem. 3.1]{johnstone2013} showed that there is another inclusion $P_{\A} \inc P_{\s\A}$, induced by the projection $\pi_0 : \s \A \to \A$ and disjoint from $\Phi_1$. We have not been able to show that this holds in general for (binary implicative) arrow algebras, nor to find reasonable assumptions under which this may be the case. 
\end{question}

At least in the modifiable case, we can say more about the inclusion ${\Phi_{1} : P_{\A} \inc P_{\s\A}}$.

Specializing \autoref{open subtriposes} to the context of arrow algebras, recall that a subtripos of $P_{\A}$ is \emph{open} if it is induced by a nucleus $o$ on $\A$ of the shape:
\[ o (a) \coloneqq u \arr a\]
for some $u \in A$, in which case the \emph{closed} nucleus:
\[c (a) \coloneqq a \lor u \]
induces its complement in the lattice of subtriposes of $P_{\A}$ considered up to equivalence.

\begin{definition}
    Given a modifiable arrow algebra $\A$, we define its \emph{modification} as the arrow algebra $\A^m \coloneqq (\s \A)_c$, where $c$ is the nucleus on $\s \A$ defined by: 
    \[c(x) \coloneqq x \lor (\bot,\top) \]
    
    We denote with $M_{\A}$ the \emph{modified arrow tripos} $P_{\A^m}$, that is, the subtripos $P_{(\s\A)_c}$ of $P_{\s\A}$.
\end{definition}

\begin{proposition}
Let $\A$ be a modifiable arrow algebra. Then, the inclusion $\Phi_{1} : P_{\A} \inc P_{\s\A}$ is open, induced by the nucleus:
            \[o( x ) \coloneqq (\bot,\top) \arr x\]  
In particular, $M_{\A}$ is the closed complement of $P_{\A}$ in the lattice of subtriposes of $P_{\s\A}$ considered up to equivalence.
\end{proposition}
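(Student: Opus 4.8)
The plan is to identify the nucleus on $\s\A$ corresponding to the subtripos $\Phi_{1} : P_{\A} \inc P_{\s\A}$ and to recognize it, up to isomorphism, as the proposed open nucleus $o$. Recall that $\Phi_{1}$ is induced by the implicative surjection $\pi_1 : \s\A \to \A$, which by construction admits the diagonal $\delta : \A \to \s\A$, $a \mapsto (a,a)$, as a right adjoint. By \autoref{rem:nucleus induced by surjection}, the subtripos $P_{\A} \inc P_{\s\A}$ is therefore equivalent to the one induced by a nucleus isomorphic to the composite $\delta\pi_1 : \s\A \to \s\A$, which acts as $(x_0,x_1) \mapsto (x_1,x_1)$.

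It then remains to compare $\delta\pi_1$ with $o$. Unwinding the implication in $\s\A$, for $x = (x_0,x_1)$ I compute
\[ o(x) = (\bot,\top) \arr x = \left( (\bot \arr x_0) \evmeet (\top \arr x_1),\ \top \arr x_1 \right). \]
Here modifiability enters crucially: the identity $\bot \arr x_0 = \top$ collapses the first component, giving $o(x) = (\partial x_1, \partial x_1)$ with $\partial x_1 = \top \arr x_1$. Since $\partial \liso_A \id_A$ by \cite[Prop. 5.8]{berg2023arrow}, applying the corresponding realizers in each coordinate yields $o \liso_{\s A} \delta\pi_1$ as elements of $P_{\s\A}(\s A)$; concretely, the first (hence both) coordinate of $\evmeet_x o(x) \arr \delta\pi_1(x)$ reduces to $\evmeet_a \partial a \arr a \in S$, and symmetrically for the converse. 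By \autoref{prop:nuclei and subtriposes} applied to $\s\A$, isomorphic nuclei induce equivalent subtriposes, so $P_{\A} \inc P_{\s\A}$ is induced by $o$. As $o$ is literally of the shape $u \arr {-}$ with $u = (\bot,\top) \in \s A$, the recalled characterization of open subtriposes shows that $\Phi_{1}$ is open.

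For the final claim, I would invoke the same recalled correspondence: the closed complement of an open subtripos induced by $u \arr {-}$ is the subtripos induced by the closed nucleus ${-} \lor u$. With $u = (\bot,\top)$ this closed nucleus is exactly $c(x) = x \lor (\bot,\top)$, which is the nucleus defining $\A^m = (\s\A)_c$. Hence $M_{\A} = P_{\A^m}$ is the closed complement of $P_{\A}$ in the lattice $\SubTrip{P_{\s\A}}$ considered up to equivalence. The only genuinely delicate point is the computation of $o(x)$: it is precisely modifiability, via $\bot \arr {-} = \top$, that forces the first coordinate of the open nucleus to agree (up to $\partial$) with the second, matching $\delta\pi_1$; without it the two coordinates need not coincide and the inclusion need not be open.
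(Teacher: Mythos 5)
Your proof is correct and takes essentially the same approach as the paper: both reduce the claim, via \autoref{rem:nucleus induced by surjection}, to showing $o \liso_{\s A} \delta\pi_1$, and both verify this by unwinding the implication in $\s\A$, using modifiability to collapse the $\bot \arr x_0$ component and the properties of $\partial a \coloneqq \top \arr a$ to conclude, with the closed-complement statement then read off from the recalled open/closed correspondence with $u = (\bot,\top)$. The only cosmetic difference is that you first simplify $o(x)$ to $(\partial x_1, \partial x_1)$ and then compare with $\delta\pi_1(x) = (x_1,x_1)$, whereas the paper computes the two implications $o(x) \arr \delta\pi_1(x)$ and $\delta\pi_1(x) \arr o(x)$ directly.
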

\begin{proof} By \autoref{rem:nucleus induced by surjection} and the discussion preceding it, we only have to show that $o \liso\delta \pi_1$.
\begin{itemize}
    \item[--] To show that $o \vdash_{\s A}  \delta\pi_1$, note that:
    \begin{align*}
    \evmeet_{x\in \s A} o(x) \arr \delta\pi_1(x) \in \s S       \iff{} & \evmeet_{x\in \s A}( (\bot,\top) \arr (x_0,x_1) ) \arr (x_1,x_1) \in \s S\\ 
        \iff{}& \evmeet_{x\in \s A} \textstyle (\bot \arr x_0\evmeet \top \arr x_1, \top \arr x_1) \arr (x_1,x_1 ) \in \s S\\
    \iff{}& \evmeet_{x\in \s A}\textstyle (\bot \arr x_0\evmeet \top \arr x_1) \arr x_1 \evmeet (\top \arr x_1)\arr x_1 \in S\\
    \iff{}& \evmeet_{x\in \s A} (\top \arr x_1) \arr x_1 \in S
    \end{align*}
    which is ensured by the properties of $\partial a \coloneqq \top \arr a$.

\item[--] To show that $\delta\pi_1 \vdash_{\s A} o$ note that, by the hypothesis of modifiability:
    \begin{align*}
     \evmeet_{x\in\s A} \delta\pi_1(x) \arr o(x) \in \s S        \iff{} & \evmeet_{x\in \s A} (x_1,x_1) \arr  (\bot,\top) \arr (x_0,x_1)  \in \s S\\ 
        \iff{}& \evmeet_{x \in \s A}\textstyle (x_1,x_1) \arr (\bot\arr x_0 \evmeet \top \arr x_1, \top \arr x_1) \in \s S\\
            \iff{}& \evmeet_{x \in \s A}\textstyle (x_1,x_1) \arr ( \top \arr x_1, \top \arr x_1) \in \s S\\
        \iff{}& \evmeet_{x\in \s A} (x_1 \arr \top\arr x_1,x_1 \arr \top\arr x_1) \in \s S\\
        \iff{} & \evmeet_{x\in \s A} x_1 \arr \top\arr x_1 \in S
    \end{align*}
which is again ensured by the properties of $\partial a \coloneqq \top \arr a$.
\end{itemize}
\end{proof}

\begin{example}
    For $\A = \Pow{ \mathcal{K}_1}$, we reobtain what proved in \cite{vanoosten1997}: the effective topos $\Eff \iso \AT { \A }$ is an open subtopos of the effective topos built on the topos of sheaves over the Sierpiński space, $\Eff_{\cdot \arr\cdot} \iso \AT {\s\A}$, and Grayson's modified realizability topos $\Mod$ -- characterized in \cite{berg2023arrow} as $ \AT {\A^m}$ -- is its closed complement.
\end{example}

\begin{example}
    For $\A = \operatorname{PER}\NN$, we obtain that the \emph{extensional modified realizability topos} characterized in \cite{berg2023arrow} as $\AT { \A^m }$ is the closed complement of $\AT { \A}$ as subtoposes of $\AT {\s\A}$. 
\end{example}

Let us now see how the construction of the modified arrow tripos can be made pseudofunctorial. In the proof, we will need the following property, which makes use of the hypothesis of modifiability.

\begin{lemma}
    Let $f : \A \to \B$ be an implicative morphism in $\ArrAlgmod$. 
    
    Then, $c \s f c \vdash \s f c$.\footnote{Of course, the first $c$ is a nucleus on $\s\A$, while the second one is a nucleus on $\s \B$.}
\end{lemma}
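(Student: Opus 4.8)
The plan is to exploit that $\s f\circ c$ already lands among the $c$-closed elements of $\s\B$, so that applying $c$ on the outside changes nothing up to the logical order. I would first reduce to the case where $f$ is monotone for the evidential order: the asserted inequality depends on $\s f$ only up to isomorphism, and $\s f\liso\s{(Mf)}$ by \autoref{prop:sierpinski} with $Mf$ monotone, so I may assume $\s f(x_0,x_1)=(f(x_0),f(x_1))$.

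Write $u:=(\bot,\top)\in\s B$ and recall that the closed nucleus is $c(z)=z\lor u$, the join being the Heyting join. Regarding $\s f\circ c$ and $c\circ\s f\circ c$ as elements of the arrow algebra $(\s\B)^{\s A}$, whose joins are pointwise, we have $c\circ\s f\circ c=(\s f\circ c)\lor\underline u$, where $\underline u$ is the function constant at $u$. Since the Heyting join is a least upper bound in the logical preorder and trivially $\s f\circ c\vdash\s f\circ c$, it suffices to prove the single inequality
\[\underline u\vdash\s f\circ c,\qquad\text{i.e.}\qquad \evmeet_{x\in\s A}\bigl(u\arr\s f(c(x))\bigr)\in\s S_B;\]
the conclusion $c\circ\s f\circ c\vdash\s f\circ c$ then follows from the universal property of the join, whose combining realizer is supplied uniformly as in any Heyting prealgebra.

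To establish this membership I would compute the two coordinates of $\s f(c(x))$. The second coordinate of the Sierpiński join is the Heyting join in $A$ (cf. \cite[Prop. 7.2]{berg2023arrow}), so $(c(x))_1=x_1\lor\top=\top$ and hence $(\s f(c(x)))_1=f(\top)$, which lies in $S_B$ by condition (i) of \autoref{def:implicative morphism of aas} and — crucially — is independent of $x$. Expanding the implication in $\s\B$,
\[u\arr\s f(c(x))=\bigl(\,\bot\arr(\s f(c(x)))_0\ \evmeet\ \top\arr f(\top),\ \top\arr f(\top)\,\bigr),\]
and using modifiability of $\B$ in the form $\bot\arr a=\top$, the first coordinate collapses to $\top\evmeet\partial f(\top)=\partial f(\top)$. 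Since $f(\top)\in S_B$ yields $\partial f(\top)\in S_B$ (by $\id_B\vdash\partial$, modus ponens and \cite[Prop. 5.8]{berg2023arrow}) and this value does not depend on $x$, the meet over $x\in\s A$ is just $\partial f(\top)\in S_B$, giving $\underline u\vdash\s f\circ c$.

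The one point to get right — and the only place modifiability is genuinely used — is the simultaneous collapse of both coordinates of $u\arr\s f(c(x))$: modifiability forces $\bot\arr(-)=\top$ in the first slot, while the identity $(c(x))_1=\top$ forces the second slot to be the $x$-independent value $f(\top)\in S_B$. This independence from $x$ is exactly what dissolves the otherwise delicate uniform-realizer issue, since separators need not be closed under evidential meets.
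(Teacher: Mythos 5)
Your reduction is sound and is exactly the paper's: since logical joins in $(\s \B)^{\s A}$ are computed pointwise, $c \s f c \vdash \s f c$ comes down to the single condition $\evmeet_{x\in\s A}(\bot,\top)\arr \s f c(x)\in \s S_B$, and modifiability of $\B$ collapses this to a condition on second coordinates, $\evmeet_{x\in\s A}\top\arr(\s f c(x))_1\in S_B$. The gap is the asserted equality $(c(x))_1=x_1\lor\top=\top$. The join occurring in the nucleus $c$ is the \emph{logical} join of the Heyting prealgebra $(A,\vdash)$, concretely given by the impredicative formula of \cite[Prop.~5.1]{berg2023arrow}; such joins are determined only up to $\liso$, and while $\top\lor a\liso\top$ holds (uniformly in $a$), the equality $\top\lor a=\top$ is false in general: for instance in a realizability arrow algebra $D\mathbb{P}$ one has $\top=P$, whereas the join formula produces a set of elements with specific computational behaviour, a proper subset of $P$. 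Consequently $(\s f c(x))_1$ is $f(\top\lor x_1)$ (after your legitimate reduction to monotone $f$), not the $x$-independent element $f(\top)$, and the meet over $x\in\s A$ does not collapse to the single separator element $\partial f(\top)$.

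This is not a cosmetic slip, because the $x$-dependence is precisely the uniform-realizer problem you claim your argument dissolves: since separators need not be closed under arbitrary evidential meets, knowing that each conjunct $\top\arr f(\top\lor x_1)$ is \emph{isomorphic} to $\partial f(\top)$ proves nothing about the meet; you need the isomorphisms $\top\lor x_1\liso\top$ to be uniformly realized in $\A$, and you need to transport that uniformity through $f$, which is exactly what condition (iii) of \autoref{def:implicative morphism of aas} provides. That is how the paper concludes: it reduces to $\evmeet_{x\in\s A}\top\arr f(\top\lor x_1)\in S_B$ and derives this from the uniform statement $\evmeet_{a\in A}\top\arr(\top\lor a)\in S_A$ via property (iii), together with $\top\vdash f(\top)$ and intuitionistic reasoning. (A smaller inaccuracy: the fact that the second coordinate of a logical join in $\s\A$ is the logical join of the second coordinates is not \cite[Prop.~7.2]{berg2023arrow}; it requires the explicit description of joins in \cite[Prop.~5.1]{berg2023arrow} together with \autoref{rem:strong arrow algebras}.) With these corrections your argument becomes the paper's proof.
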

\begin{proof}
By definition of the nucleus $c \in \N {\s\B}$, and using the fact that logical joins are computed pointwise in $(\s \B)^{\s A}$, $c \s f c \vdash \s f c$ is equivalent to:
    \[\evmeet_{x \in \s A} (\bot,\top) \arr \s f c (x) \in \s S_B\]
    Since $\B$ is modifiable, this reduces to:
\[ \evmeet_{x\in \s A} \top \arr Mf ( (c x)_1 ) \in S_B\]
where $M : \ArrAlg \to \ArrAlg$ is the monotonization pseudofunctor of \autoref{rem:monotonization}, and hence since $Mf \liso f$:
\[ \evmeet_{x\in \s A} \top \arr f ( ((\bot,\top) \lor (x_0,x_1) )_1 ) \in S_B\]

Note now that, in any arrow algebra of the form $\s \A$, the logical join $a \lor a'$ has $a_1 \lor a_1'$ as its second component. This can be seen using the explicit description of logical joins given in \cite[Prop. 5.1]{berg2023arrow} together with \autoref{rem:strong arrow algebras}, recalling also that (evidential) meets and implications in $\s\A$ are computed pointwise on the second component. Hence, the previous explicitly reads as:
\[ \evmeet_{x\in \s A} \top \arr f ( \top \lor x_1 ) \in S_B\]
which follows from $\evmeet_{a\in A} \top \arr (\top \lor a) \in S_A$ by the properties of implicative morphisms.
\end{proof}

\begin{theorem}
For every implicative morphism $f : \A \to \B$ in $\ArrAlgmod$, let $f^m : \A^m\to \B^m$ be the composite:
\[\begin{tikzcd}
    \A^m \rar["c"] & \s \A \rar ["\s f"] & \s \B \rar["\id_{\s B}"] & \B^m  
\end{tikzcd}\]
where $\s f : \s \A \to \s \B$ is the implicative morphism defined in \autoref{prop:sierpinski}.

Then, $(-)^m$ is a pseudofunctor $\ArrAlgmod\to \ArrAlg$.

Moreover, if $f$ is computationally dense with right adjoint $h : \B\to\A$, then $f^m$ is computationally dense as well, and a right adjoint is given by $h^m : \B^m \to \A^m$. Furthermore, the square:
\[\begin{tikzcd}
	{\B^m} & {\A^m} \\
	{\s \B} & {\s \A}
	\arrow["{{h^m}}", from=1-1, to=1-2]
	\arrow["c"', from=1-1, to=2-1]
	\arrow["c", from=1-2, to=2-2]
	\arrow["{{\s h}}"', from=2-1, to=2-2]
\end{tikzcd}\]
commutes up to isomorphism.
\end{theorem}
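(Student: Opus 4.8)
The plan is to reduce all three assertions to three ingredients already available: the pseudofunctoriality of the Sierpiński construction (\autoref{prop:sierpinski}), the behaviour of the nucleus $c$ recorded in \autoref{lem:nuclei are right adjoints}, and the immediately preceding lemma $c\s f c\liso\s f c$ (whose reverse inequality $\s f c\vdash c\s f c$ is a substitution instance of the inflationarity $\id_{\s B}\vdash c$). Since $f^m$ is a composite of implicative morphisms it is automatically one, so what remains is functoriality up to isomorphism, preservation of adjoints, and commutativity of the square.

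First I would establish pseudofunctoriality. On underlying sets $f^m$ is the map $x\mapsto\s f(c(x))$, its outer legs being the nucleus morphism $c\colon\A^m\to\s\A$ and the surjection $\id\colon\s\B\to\B^m$; local monotonicity is inherited from that of $\s{(-)}$. For the unit, $\s{\id_\A}\liso\id_{\s A}$ shows $(\id_\A)^m\liso c$ as an endomorphism of $\A^m$, and \autoref{lem:nuclei are right adjoints} applied to the nucleus $c$ on $\s\A$ gives $c\liso\id$ in $\A^m=(\s\A)_c$, whence $(\id_\A)^m\liso\id_{\A^m}$. For composition, expanding $g^m\circ f^m$ on underlying sets produces $\s g\circ c\circ\s f\circ c$, with a spurious middle $c$ arising from $c\circ\id$, whereas $(gf)^m\liso\s g\circ\s f\circ c$ by pseudofunctoriality of $\s{(-)}$. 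The two agree up to isomorphism precisely because $c\s f c\liso\s f c$: condition (iii) of \autoref{def:implicative morphism of aas} lets $\s g$ transport this isomorphism of morphisms $\s\A\to\s\B$ to $\s g(c\s f c)\liso\s g(\s f c)$, and the final surjection preserves it since the separator of $\s{\mathcal C}$ sits inside that of $\mathcal C^m$. Coherence of the structural isomorphisms is automatic in the preorder-enriched setting.

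Computational density then follows from the fact that a pseudofunctor carries adjunctions to adjunctions. If $f$ is computationally dense with right adjoint $h$, then $h\colon\B\to\A$ is a morphism of modifiable algebras and so lives in $\ArrAlgmod$, and we have $\id_\A\vdash hf$ together with $fh\vdash\id_\B$. Applying $(-)^m$ and inserting the structural isomorphisms of the previous step gives $f^m h^m\liso(fh)^m\vdash(\id_\B)^m\liso\id_{\B^m}$ and, dually, $\id_{\A^m}\liso(\id_\A)^m\vdash(hf)^m\liso h^m f^m$; that is, $f^m\dashv h^m$. Hence $f^m$ is computationally dense with exactly the prescribed right adjoint $h^m$.

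For the square I would unwind the two composites $c\circ h^m$ and $\s h\circ c$ (both vertical legs being the nucleus $c$) on underlying sets: the former is the function $c\,\s h\,c$ and the latter $\s h\,c$, both regarded as morphisms $\B^m\to\s\A$. Because the order on morphisms into $\s\A$ uses only the underlying set of the source and the separator $\s S_A$ of the target, the desired $c\,\s h\,c\liso\s h\,c$ is exactly the immediately preceding lemma read with $h$ in place of $f$. I expect the only delicate point throughout to be the bookkeeping of the orders in play — $\vdash_{\s A}$, the $c$-modified order, and the various hom-orders — all reconciled by the separator inclusion $\s S\subseteq(\s S)_c$; the single genuinely mathematical input, and the one place where modifiability enters, is the lemma $c\s f c\liso\s f c$, which simultaneously fixes the composition law in the first step and the commuting square in the last.
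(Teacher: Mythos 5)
Your proposal is correct, and for two of the three parts it follows the paper's own route: the pseudofunctoriality and the commuting square both rest, exactly as in the paper, on the single lemma $c\,\s f c \vdash \s f c$ (with the converse $\s f c \vdash c\,\s f c$ coming from $\id \vdash c$), used once to identify $g^m f^m$ with $(gf)^m$ and once, with $h$ in place of $f$, to obtain $c\,\s h c \liso \s h c$ as morphisms $\B^m \to \s\A$. Where you genuinely diverge is computational density: the paper proves $\s f c\, \s h c \vdash^c_{\s B} \id_{\s B}$ and $\id_{\s A} \vdash^c_{\s A} \s h c\, \s f c$ by direct computation, using the adjunction $\s f \dashv \s h$ from \autoref{prop:sierpinski} together with the key lemma, whereas you invoke the formal fact that a locally monotone pseudofunctor preserves adjunctions, chaining $f^m h^m \liso (fh)^m \vdash (\id_{\B})^m \liso \id_{\B^m}$ and dually. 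This is valid: $h$ does lie in $\ArrAlgmod$, and the local monotonicity your argument needs is exactly what you secured in the first step; it is also more modular, making it transparent that preservation of computational density is a purely formal consequence of pseudofunctoriality, with \autoref{prop:sierpinski}'s adjunction $\s f \dashv \s h$ never needed again. What the paper's computation buys instead is the right adjoint in the normal form $\s h c$ on the nose (which is $h^m$ as a function) rather than up to structural isomorphisms — convenient for the subsequent square — but it establishes nothing your route does not. One cosmetic point: your justification that the final leg $\id_{\s{\mathcal{C}}} : \s{\mathcal{C}} \to \mathcal{C}^m$ preserves the isomorphism ``because the separator of $\s{\mathcal{C}}$ sits inside that of $\mathcal{C}^m$'' is imprecise as stated; the actual reason is that $\phi \vdash \psi$ implies $\phi \vdash^c \psi$ (since $\psi \vdash c\psi$), i.e.\ postcomposition with the implicative morphism $\id_{\s{\mathcal{C}}}$ is order-preserving, which is the preorder-enrichment of $\ArrAlg$ rather than the inclusion $\s S_C \subseteq (\s S_C)_c$ alone. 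This does not affect the correctness of the proof.
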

\begin{proof}
    First, let us show that $(-)^m$ preserves identities and compositions up to isomorphism.
\begin{itemize*}
    \item By definition, $\id_A^m : \A^m \to \A^m$ is given by the composite:
\[\begin{tikzcd}
    \A^m \rar["c"] & \s \A \rar ["\s {\id_A}"] & \s \A \rar["\id_{\s A}"] & \A^m  
\end{tikzcd}\]
which means that $\id_A^m = c : \A^m \to \A^m$, and obviously $c \liso^c \id_{\s A}$.

\item By definition, for $f : \A \to \B$ and $g : \B \to\mathcal{C}$, $(g f)^m$ is given by the composite:
\[\begin{tikzcd}
    \A^m \rar["c"] & \s \A \rar ["\s {(g f)}"] & \s {\mathcal{C}} \rar["\id_{\s C}"] & {\mathcal{C}^m}  
\end{tikzcd}\]
where of course $\s{(gf)}\liso \s g \s f$, whereas $g^m f^m$ is given by the composite:
\[\begin{tikzcd}[cramped]
     \A^m \rar["c"] & \s \A \rar ["\s f"] & \s \B \rar["\id_{\s B}"] & \B^m \rar["c"] & \s \B \rar ["\s {g}"] & \s {\mathcal{C}} \rar["\id_{\s C}"] & {\mathcal{C}^m}  
\end{tikzcd}\]
which means that we need to show that $\s g c \s f c \liso^c \s g \s f c$.

On one hand, using the fact that $\id \vdash c$ both for $c \in \N\B$ and $c \in \N{\mathcal{C}}$:
\[\s g \s f c \vdash \s g c \s f c \vdash c \s g c \s f c\]
i.e. $\s g \s f c \vdash^c \s g c \s f c$.

On the other, by the previous lemma we know that $c \s f c \vdash \s f c$, which implies $\s g c \s f c \vdash \s g \s f c$ by the properties of $\s g$, and hence $\s g c \s f c \vdash c \s g \s f c$ since $\id_{\s C} \vdash c$.
\end{itemize*}
The pseudofunctoriality of $f \mapsto \s f$ then yields the pseudofunctoriality of $f \mapsto f^m$.

Suppose now $h : \B\to\A$ is right adjoint to $f$; let us show that $\s h c$ is right adjoint to $\s f c :\A^m\to\B^m$.
\begin{itemize*}
    \item On one hand, $\s f c \s h c \vdash_{\s B}^c \id_{\s B}$ explicitly reads as $\s f c \s h c \vdash_{\s B} c$. By \autoref{prop:sierpinski}, we know that $\s h$ is right adjoint to $\s f$, so the previous is equivalent to $c \s h c \vdash_{\s B} \s h c$, which is ensured by the previous lemma.

    \item On the other, $\id_{\s A} \vdash^c_{\s A} \s h c \s f c$ explicitly reads as $\id_{\s A} \vdash_{\s A} c \s h c \s f c$. As $\id_{\s A} \vdash c$, this is ensured if $\id_{\s A} \vdash_{\s A} \s h c \s f c$. This is again equivalent to $\s f \vdash_{\s A} c \s f c$ as $\s h$ is right adjoint to $\s f$, which follows since $\id \vdash c$ both for $c \in \N{\s \A}$ and $c\in \N{\s \B}$.
    \end{itemize*}

Finally, to show that the square above commutes up to isomorphism, we need to show that $c h^m \liso \s h c$ as morphisms $\B^m \to \s \A$. On one hand, $\s h c \vdash c h^m$ explicitly means $\s h c \vdash_{\s B} c \s h c$, which follows simply from $\id_{\s A} \vdash c$. On the other, $c h^m \vdash \s h c $ explicitly means $c \s h c \vdash_{\s B} \s h c$, which is again ensured by the previous lemma.
\end{proof}

\begin{corollary}
    Let $\A$ and $\B$ be modifiable arrow algebras. Then, every geometric morphism $\Phi : P_{\B}\to P_{\A}$ induces a geometric morphism $\Phi^m : M_{\B} \to M_{\A}$ such that the diagram:
  \[\begin{tikzcd}
	{M_{\B}} & {M_{\A}} \\
	{P_{\s\B}} & {P_{\s\A}}
	\arrow["{{\Phi^m}}", from=1-1, to=1-2]
	\arrow[hook, from=1-1, to=2-1]
	\arrow[hook, from=1-2, to=2-2]
	\arrow["{\s \Phi}"', from=2-1, to=2-2]
\end{tikzcd}\]
is a pullback square of triposes and geometric morphisms.

In particular, the induced diagram of toposes and geometric morphisms:
\[\begin{tikzcd}
	{\AT {\B^m}} & {\AT {\A^m}} \\
	{\AT {\s \B}} & {\AT {\s\A}}
	\arrow[from=1-1, to=1-2]
	\arrow[hook, from=1-1, to=2-1]
	\arrow[hook, from=1-2, to=2-2]
	\arrow[from=2-1, to=2-2]
\end{tikzcd}\]
is a pullback square.
\begin{proof}
The fact that the square commutes follows directly by the previous proposition. To show that it is a pullback, instead, recall from \cite{johnstone2017} that, given a closed nucleus $k x \coloneqq x + u$ on $\s\A$, the pullback of the closed subtripos $P_{\s \A_k} \inc P_{\s\A}$ along $\s\Phi$ is the closed subtripos of $P_{\s\B}$ determined by the nucleus $k' y \coloneqq y + (\s \Phi)^+_{\s B} (u)$. Therefore, the square above is a pullback if and only if $(\s \Phi)^+_{\s B} (\bot,\top) \liso (\bot,\top)$ in $\s \B$. 

To prove this, let $f : \A \to \B$ be an implicative morphism with right adjoint $h : \B \to \A$ inducing $\Phi$, so that $\s \Phi$ is induced by $\s f$ with right adjoint $\s h$ as in \autoref{prop:sierpinski}; then, we need to show that $\s f (\bot,\top) \liso (\bot,\top)$ in $\s \B$. On one hand, by modifiability of $\B$, $(\bot,\top) \vdash \s f (\bot,\top)$ reduces simply to $\top \vdash f(\top)$, which is true as $f$ is an implicative morphism. On the other, $\s f (\bot,\top) \vdash (\bot,\top)$ is equivalent to $(\bot,\top) \vdash \s h (\bot,\top)$, which is true again as $\A$ is modifiable and $h$ is an implicative morphism.
\end{proof}
\end{corollary}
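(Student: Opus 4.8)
The plan is to pull everything back to the arrow-algebra level and then apply the pullback formula for closed subtriposes. First I would realize the geometric morphism: by \autoref{prop:geo is induced by cd morphism}, up to isomorphism $\Phi$ is induced by a computationally dense implicative morphism $f : \A \to \B$ with right adjoint $h : \B \to \A$, and by \autoref{lem:monotonicity} I may take both $f$ and $h$ monotone, so that $\s f$ and $\s h$ are given by the componentwise formulas $\s f(x_0,x_1) = (f(x_0), f(x_1))$ and similarly for $\s h$. Then $\s\Phi$ is the geometric morphism induced by $\s f$ with right adjoint $\s h$ (\autoref{prop:sierpinski}), and $\Phi^m$ is the one induced by $f^m$ with right adjoint $h^m$ from the preceding theorem. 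Commutativity of the tripos square up to isomorphism is then immediate: on direct images it amounts exactly to the relation $c\,h^m \liso \s h\,c$ established in the preceding theorem.

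Next I would show the square is a pullback. Recall that $M_{\A} = P_{(\s\A)_c}$ is the closed subtripos of $P_{\s\A}$ cut out by the closed nucleus $c(x) = x \lor (\bot,\top)$, with defining element $(\bot,\top) \in P_{\s\A}(1) = \s A$. By the standard description of pullbacks of closed subtriposes along a geometric morphism (\cite{johnstone2017}), pulling $M_{\A} \inc P_{\s\A}$ back along $\s\Phi$ yields the closed subtripos of $P_{\s\B}$ whose defining element is $(\s\Phi)^+$ applied to $(\bot,\top)$, namely $\s f(\bot,\top)$. Since $M_{\B} = P_{(\s\B)_c}$ is the closed subtripos of $P_{\s\B}$ with defining element $(\bot,\top)$, the square is a pullback precisely when these two closed subtriposes agree up to equivalence, i.e. when
\[\s f(\bot,\top) \liso (\bot,\top) \quad \text{in } \s\B.\]

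I would then verify this isomorphism, and this is where modifiability does the work. For $(\bot,\top) \vdash \s f(\bot,\top)$, writing out the logical order of $\s\B$ the condition is $(\bot \arr f(\bot)) \evmeet (\top \arr f(\top)) \in S_B$; modifiability of $\B$ collapses $\bot \arr f(\bot) = \top$, so it reduces to $\top \vdash f(\top)$, which holds since $f$ is an implicative morphism (\autoref{lem:alt def of implicative morphism}). For the reverse $\s f(\bot,\top) \vdash (\bot,\top)$, the clean route — and what I expect to be the one genuinely load-bearing step — is to transpose across the adjunction $\s f \dashv \s h$: the inequality is equivalent to $(\bot,\top) \vdash \s h(\bot,\top)$ in $\s\A$, which by the same computation now in the modifiable algebra $\A$ reduces to $\top \vdash h(\top)$, true because $h$ is an implicative morphism. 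A direct check of $\s f(\bot,\top) \vdash (\bot,\top)$ is awkward, since $f(\top) \arr \top$ need not lie in the separator; passing to $\s h$ sidesteps this entirely.

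Finally, for the topos square I would transport the tripos pullback along the tripos-to-topos construction: the equivalence $\SubTrip{P_{\s\A}} \iso \SubTop{\AT{\s\A}}$ of \autoref{thm:subtoposes and subtriposes} matches closed subtriposes with closed subtoposes and is compatible with the closed-subtopos pullback formula of \cite{johnstone2017}, so the pullback square of triposes is carried to the displayed pullback square of toposes. The main obstacle throughout is not any single calculation but assembling the right ingredients in the right order — realizing $\Phi$ by $(f,h)$, identifying the pulled-back nucleus's defining element as $\s f(\bot,\top)$, and recognizing that modifiability together with the adjunction $\s f \dashv \s h$ is exactly what forces $\s f(\bot,\top) \liso (\bot,\top)$.
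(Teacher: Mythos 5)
Your proposal is correct and follows essentially the same route as the paper's proof: realizing $\Phi$ by a computationally dense pair $(f,h)$, invoking the closed-subtripos pullback formula of \cite{johnstone2017} to reduce the pullback condition to $\s f(\bot,\top) \liso (\bot,\top)$ in $\s\B$, and verifying this via modifiability of $\B$ for one direction and transposition across $\s f \dashv \s h$ plus modifiability of $\A$ for the other. The only (harmless) additions are your explicit appeal to \autoref{lem:monotonicity} and your spelled-out identification of commutativity with $c\,h^m \liso \s h\, c$, both of which the paper leaves implicit.
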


\begin{remark}
    In particular, restricting to arrow algebras of the form $\Pow {\mathbb{P}}$ for a discrete and absolute PCA $\mathbb{P}$, we reobtain \cite[Prop. 2.1]{johnstone2017}.
\end{remark}

\begin{remark}
  Recall by \autoref{prop:alternative subtripos} that we can identify $M_{\A}$ up to equivalence with the subtripos $M_{\A}' \inc P_{\s\A}$  defined by:
\begin{align*}
      M_{\A}'(I) \coloneqq{} & \Set{ \alpha \in P_{\s\A}(I) | c \alpha \vdash_I \alpha }\\
      ={} & \Set{ \alpha \in P_{\s\A}(I)| \evmeet_i (\bot,\top) \to \alpha(i) \in \s S_A}\\
  ={} & \Set{ \alpha \in P_{\s\A}(I)| \evmeet_i \top \to \alpha_1(i) \in  S_A}\\
  ={} & \Set{ \alpha \in P_{\s\A}(I)| \top_I \vdash_I \alpha_1 }
\end{align*}
and in the same way we can identify $M_{\B}$ up to equivalence with the subtripos $M_{\B}' \inc P_{\s\B}$ defined by:
  \[  M_{\B}'(I) = \Set{ \beta \in P_{\s\B}(I) | \top_I  \vdash_I \beta_1 }  \]

  In these terms, $\Phi^m$ can be described explicitly as:
  \[\begin{tikzcd}
{M'_{\B}} & {M'_{\A}}
\arrow[""{name=0, anchor=center, inner sep=0}, "{\s f {\circ} {-} }"', curve={height=12pt}, from=1-2, to=1-1]
	\arrow[""{name=1, anchor=center, inner sep=0}, "{\s h {\circ} {-} }"', curve={height=12pt}, from=1-1, to=1-2]
	\arrow["\dashv"{anchor=center, rotate=-90}, draw=none, from=0, to=1]
\end{tikzcd} \]
that is, exactly the restriction of $\s \Phi$ in both directions. 

\end{remark}

The details of the proof of the previous corollary also reveal that a similar result holds for open complements of modified triposes, again generalizing what proved in \cite{johnstone2017}.

\begin{proposition}
    Let $\A$ and $\B$ be modifiable arrow algebras. Then, for every geometric morphism $\Phi : P_{\B} \to P_{\A}$, the diagram:
\[\begin{tikzcd}
	{P_{\B}} & {P_{\A}} \\
	{P_{\s\B}} & {P_{\s\A}}
	\arrow["\Phi", from=1-1, to=1-2]
	\arrow[hook, from=1-1, to=2-1]
	\arrow[hook, from=1-2, to=2-2]
	\arrow["{\s \Phi}"', from=2-1, to=2-2]
\end{tikzcd}\]
    is a pullback square of triposes and geometric morphisms.

    In particular, the induced diagram of toposes and geometric morphisms:
    \[\begin{tikzcd}
	{\AT \B} & {\AT \A} \\
	{\AT {\s\B}} & {\AT {\s\A}}
	\arrow[from=1-1, to=1-2]
	\arrow[hook, from=1-1, to=2-1]
	\arrow[hook, from=1-2, to=2-2]
	\arrow[from=2-1, to=2-2]
\end{tikzcd}\]
is a pullback square.
\end{proposition}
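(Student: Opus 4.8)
The plan is to mirror the proof of the preceding corollary, replacing the closed inclusions $M_\B, M_\A \inc P_{\s\A}$ by the open inclusions $\Phi_1 : P_\A \inc P_{\s\A}$, and to exploit that both families of inclusions are controlled by the \emph{same} defining element $(\bot,\top)$. As established above, $\Phi_1$ is the open subtripos of $P_{\s\A}$ with defining element $(\bot,\top)$, induced by the projection $\pi_1 : \s\A \to \A$ with right adjoint the diagonal $\delta$; the analogous statement holds over $\B$. Hence, once the pullback criterion is pinned down, it will coincide with the one already verified in the closed case.

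First I would check commutativity. Write $\Phi$ as induced by a computationally dense implicative morphism $f : \A \to \B$ with right adjoint $h$, so that $\s\Phi$ is induced by $\s f$ with right adjoint $\s h$ as in \autoref{prop:sierpinski}. Both composites $P_\B \to P_{\s\A}$ are geometric morphisms whose inverse images are postcomposition with implicative morphisms $\s\A \to \B$: the composite through $P_\A$ is induced by $f \circ \pi_1$, while the composite through $P_{\s\B}$ is induced by $\pi_1 \circ \s f$. Since $\pi_1 \s f(x_0,x_1) = Mf(x_1)$ and $f\pi_1(x_0,x_1) = f(x_1)$, and $Mf \liso f$ by \autoref{rem:monotonization}, precomposition with $\pi_1$ gives $\pi_1 \circ \s f \liso f \circ \pi_1$, so the square commutes up to isomorphism.

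For the pullback, I would invoke the open analog of the fact used in the preceding corollary: dually to the closed case, pulling back the open subtripos $P_\A \inc P_{\s\A}$ with defining element $u = (\bot,\top)$ along $\s\Phi$ yields the open subtripos of $P_{\s\B}$ with defining element $(\s\Phi)^+_{\s B}(u)$. The square is therefore a pullback if and only if $(\s\Phi)^+_{\s B}(\bot,\top) \liso (\bot,\top)$, i.e. $\s f(\bot,\top) \liso (\bot,\top)$ in $\s\B$. But this is exactly the identity already verified in the proof of the preceding corollary using modifiability of $\A$ and $\B$: here $(\bot,\top) \vdash \s f(\bot,\top)$ reduces to $\top \vdash f(\top)$, while $\s f(\bot,\top) \vdash (\bot,\top)$ reduces, via the adjunction $\s f \dashv \s h$, to $(\bot,\top) \vdash \s h(\bot,\top)$ and hence to $\top \vdash h(\top)$, both holding since $f$ and $h$ are implicative morphisms. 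The tripos-level pullback then transfers to the stated pullback of toposes exactly as in the preceding corollary.

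The main obstacle is to justify the open-pullback fact at the level of triposes — that the pullback of an open subtripos along $\s\Phi$ is the open subtripos whose defining element is the inverse image of the original — so that the pullback criterion reduces cleanly to $\s f(\bot,\top) \liso (\bot,\top)$. Since open and closed subtriposes are complementary and the defining element is the same $(\bot,\top)$ in both cases, this is the dual of the (cited) closed statement; once it is in place, the only genuine computation left is the commutativity identity $\pi_1 \circ \s f \liso f \circ \pi_1$, together with the reuse of the already-established defining-element isomorphism.
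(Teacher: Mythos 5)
Your proposal is correct and takes essentially the approach the paper intends: the paper gives no separate proof, stating only that ``the details of the proof of the previous corollary also reveal'' the open case, and your argument is exactly that elaboration --- reduce the pullback criterion to $\s f(\bot,\top) \liso (\bot,\top)$ via the (dual, open) subtripos-pullback fact, and reuse the defining-element computation already done for the closed case. Your explicit verification of commutativity via $\pi_1 \circ \s f \liso f \circ \pi_1$ (using $Mf \liso f$) is a detail the paper leaves implicit, and it is carried out correctly.
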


\bibliographystyle{amsalpha}
\bibliography{mybib}{}

\Acknowledgements

\end{document}